\theoremstyle{plain} 
\newtheorem{theorem}             {Theorem} 
\newtheorem*{theoremA}             {Theorem A}
\newtheorem*{theoremB}             {Theorem B} 
\newtheorem{corollary}[theorem] {Corollary}
\newtheorem{conjecture}[theorem] {Conjecture}
\theoremstyle{definition}
\theoremstyle{plain} 
\newtheorem{proposition} [theorem] {Proposition}
\theoremstyle{remark}
\newtheorem{definition} [theorem]  {Definition}
\newtheorem*{definition*}  {Definition}
\newtheorem*{example*}    {Example}
\newtheorem{remark}  [theorem]           {Remark}
\newtheorem*{remark*}            {Remark}
\newtheoremstyle{itplain} 
    {6pt}                    
    {5pt\topsep}                    
    {\itshape}                   
    {}                           
    {\itshape}                   
    {.}                          
    {5pt plus 1pt minus 1pt}                       
    {}  
\theoremstyle{itplain} 
\newtheorem{lemma}[theorem]{Lemma}
\newtheorem*{lemma*}{Lemma}
\newtheorem*{corollary*} {Corollary} 
\theoremstyle{remark} 
\newtheorem*{lemmatest*}{Lemma}
\patchcmd{\section}{\scshape}{\bfseries}{}{}
\renewcommand{\@secnumfont}{\bfseries}
\renewcommand{\Re}{\mathrm{Re}}
\renewcommand{\geq}{\geqslant}
\renewcommand{\leq}{\leqslant}
\renewcommand{\mod}{\mathrm{mod}\,}
\numberwithin{equation}{section}
\numberwithin{theorem}{section}
\DeclareMathOperator{\SL}{SL}
\DeclareMathOperator{\GL}{GL}
\DeclareMathOperator{\SO}{SO}
\DeclareMathOperator{\ad}{ad}
\def\eps{\varepsilon}
\def\PB{\operatorname{PB}}
\def\PGL{\operatorname{PGL}}
\DeclareMathOperator{\trace}{trace}
\DeclareMathOperator{\asai}{asai}
\DeclareMathOperator{\Gal}{Gal}
\def\O{\operatorname{O}}
\DeclareMathOperator{\ram}{ram}
\DeclareMathOperator{\res}{res}
\DeclareMathOperator{\tr}{tr}
\DeclareMathOperator{\supp}{supp}
\author{Paul D. Nelson}
\address{ETH Z{\"u}rich, Department of Mathematics, R{\"a}mistrasse 101, CH-8092, Z{\"u}rich, Switzerland}
\email{paul.nelson@math.ethz.ch}
\subjclass[2010]{Primary 58J51; Secondary 11F11, 11F67}
\date{\today}
\title{Quadratic Hecke sums and mass equidistribution}
\begin{document}

\begin{abstract}
  We consider the analogue of the quantum unique ergodicity conjecture for
  holomorphic Hecke eigenforms on compact arithmetic hyperbolic
  surfaces.  We show that this conjecture follows from
  nontrivial bounds for Hecke eigenvalues summed over quadratic
  progressions.  Our reduction provides an analogue for the
  compact case of a criterion established by Luo--Sarnak for the
  case of the non-compact modular surface.  The novelty is that
  known proofs of such criteria have depended crucially upon
  Fourier expansions, which are not available in the compact
  case.  Unconditionally, we establish a twisted variant of the
  Holowinsky--Soundararajan theorem involving restrictions of
  normalized Hilbert modular forms arising via base change.
\end{abstract}

\maketitle


\setcounter{tocdepth}{1}
\tableofcontents

\section{Introduction}\label{sec:introduction}
\subsection{Context}
Let $B$ be a quaternion algebra over $\mathbb{Q}$.
We assume that $B$ splits
over $\mathbb{R}$, and fix an identification of the real
completion $B_\infty := B \otimes_{\mathbb{Q}} \mathbb{R}$ with
the matrix algebra $M_2(\mathbb{R})$.  Let $R$ be a maximal order in $B$.
Let $\mathbb{H}$ denote the upper half-plane.
Let $R^{(1)}$ denote the group of norm one units in $R$,
regarded as a subgroup of $\SL_2(\mathbb{R})$, and write
$\mathbf{Y} := R^{(1)} \backslash \mathbb{H}$ for the
corresponding finite volume arithmetic hyperbolic surface.

The space $\mathbf{Y}$ exhibits an important dichotomy according
to whether $B$ is \emph{split} (over $\mathbb{Q}$).  In the split
case,
we may identify $B$ with the $2 \times 2$ matrix algebra
$M_2(\mathbb{Q})$
and choose $R = M_2(\mathbb{Z})$, so that $R^{(1)} = \SL_2(\mathbb{Z})$.
The quotient
$\mathbf{Y} = \SL_2(\mathbb{Z}) \backslash \mathbb{H}$ is then
non-compact, and modular forms on $\mathbf{Y}$ enjoy
\emph{Fourier expansions} $\sum a_n e(n z)$
($e(z) := e^{2 \pi i z}$) corresponding to their invariance
under the substitution $z \mapsto z+1$ generating the stabilizer
of the cusp at $\infty$.  In the non-split case, the quotient
$\mathbf{Y}$ is compact, and such expansions are not available.
For analytic problems involving such quotients $\mathbf{Y}$, the split case is often more technically complicated due to the
non-compactness of $\mathbf{Y}$ and the continuous part of the
spectral decomposition of $L^2(\mathbf{Y})$, but this technical
complication is compensated for by the existence of Fourier
expansions, which have proven to be a useful analytic tool.
This work continues a series of works \cite{MR3356036, nelson-variance-73-2, nelson-variance-II, nelson-variance-3}
studying the non-split case of problems that had previously been
understood only in the split case by means of Fourier
expansions.

We turn to the main subject matter of this paper.  Lindenstrauss
\cite{MR2195133} and Soundararajan \cite{2009arXiv0901.4060S},
addressing a case of the quantum unique ergodicity
conjecture of Rudnick--Sarnak \cite{MR1266075},
showed that cuspidal Hecke--Laplace eigenfunctions on
$\mathbf{Y}$ have equidistributed $L^2$-mass in the large
eigenvalue limit.
We consider here the analogous
problem for holomorphic forms.  Let $(\varphi_k)_k$ be a
sequence, indexed by a sequence of large enough even integers
$k$, consisting of (nonzero) cuspidal holomorphic Hecke eigenforms
$\varphi_k$ on $\mathbf{Y}$ of weight $k$.
\begin{conjecture}\label{conj:hque-compact}
  For each bounded continuous $\Psi : \mathbf{Y} \rightarrow
  \mathbb{C}$,
  we have
  \begin{equation}\label{eqn:hque-conj}
    \frac{
      \int_{\mathbf{Y}}
      y^k
      |\varphi_k(z)|^2
      \Psi(z)
      \, \frac{d x \, d y}{y^2}
    }
    {
      \int_{\mathbf{Y}}
      y^k
      |\varphi_k(z)|^2
      \, \frac{d x \, d y}{y^2}
    }
    \rightarrow 
    \frac{
      \int_{\mathbf{Y}}
      \Psi(z)
      \, \frac{d x \, d y}{y^2}
    }
    {
      \int_{\mathbf{Y}}
      \, \frac{d x \, d y}{y^2}
    }
  \end{equation}
  as $k \rightarrow \infty$.
\end{conjecture}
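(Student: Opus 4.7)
The plan is to establish \eqref{eqn:hque-conj} by a spectral-then-arithmetic reduction modelled on Luo--Sarnak and Holowinsky--Soundararajan in the split case, but with a geometric expansion of Hecke operators replacing the role played there by Fourier expansions.

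\textbf{Step 1 (spectral reduction).} In the non-split (compact) case, $L^2(\mathbf{Y})$ decomposes into the constants plus Hecke--Maass cusp forms.  By density in $C(\mathbf{Y})$, it suffices to verify \eqref{eqn:hque-conj} either for $\Psi$ constant (giving the normalization) or for a fixed Hecke--Maass cusp form $\Psi$.  In the latter case we must show that the normalized triple period
\[
I_k(\Psi) := \frac{\displaystyle\int_{\mathbf{Y}} y^k |\varphi_k(z)|^2 \Psi(z)\, \frac{d x\,d y}{y^2}}{\displaystyle\int_{\mathbf{Y}} y^k |\varphi_k(z)|^2\, \frac{d x\,d y}{y^2}}
\]
tends to $0$ as $k \to \infty$.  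The split case admits a parallel reduction but additionally requires handling the continuous spectrum via Eisenstein series.

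\textbf{Step 2 (Hecke amplification yielding quadratic sums).}  Use the Hecke eigenrelation $T_n \varphi_k = \lambda_{\varphi_k}(n)\, \varphi_k$: substitute $\varphi_k = \lambda_{\varphi_k}(n)^{-1} T_n \varphi_k$ in $I_k(\Psi)$ and unfold one copy of $T_n$ against the measure $y^k |\varphi_k|^2 \Psi\, d x\,d y/y^2$.  In the split case, pairing the geometric expansion of $T_n$ with the Fourier expansion of $\Psi$ produces shifted convolution sums $\sum_m \lambda_{\varphi_k}(m)\lambda_{\varphi_k}(m+h)$.  In the compact case this route is blocked; instead, reorganise the sum defining $T_n$ by reduced trace, using that the fibres $\{\gamma \in R : \mathrm{nr}(\gamma) = n,\ \mathrm{tr}(\gamma) = t\}$ are governed by a fixed ternary quadratic form $Q = Q_R$ attached to $R$, so that $t^2 - 4n$ must be representable by $Q$.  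After summation by parts, $I_k(\Psi)$ reduces to sums of Hecke eigenvalues of the shape
\[
\sum_{t} \alpha_\Psi(t) \sum_{\substack{n \\ t^2 - 4n \in Q(\mathbb{Z}^3)}} \lambda_{\varphi_k}(n),
\]
i.e., sums of $\lambda_{\varphi_k}$ indexed by a \emph{quadratic progression}.

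\textbf{Step 3 and main obstacle.} Conditional on a nontrivial bound of the form $\sum_{n \leq N,\, t^2 - 4n \in Q(\mathbb{Z}^3)} \lambda_{\varphi_k}(n) = o(N)$ (with power savings uniform in a suitable range of $N = N(k)$), a dyadic decomposition concludes $I_k(\Psi) = o(1)$.  The heart of the proof is Step 2: massaging the compact triple period into this clean form while uniformly controlling the sharply peaked weight $y^k$ and the smooth factor $\Psi$.  The microlocal/$p$-adic apparatus for handling $y^k$ in the compact setting has been assembled in \cite{MR3356036, nelson-variance-73-2, nelson-variance-II, nelson-variance-3}, suggesting that the present paper's technical contribution is precisely this geometric expansion; the remaining quadratic Hecke sum bound is then isolated as an arithmetic problem of independent interest.
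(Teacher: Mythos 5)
First, note that the statement you are addressing is a conjecture: the paper does not prove \eqref{eqn:hque-conj} unconditionally, but only reduces it (Theorem \hyperlink{thm:impl}{A}) to the quadratic Hecke sum bound of Conjecture \ref{conj:non-split-sums}. Your proposal is likewise only a conditional reduction (your Step 3 assumes exactly such a bound), so the fair comparison is with the paper's proof of Theorem \hyperlink{thm:impl}{A}. Your Step 1 matches the paper's preliminary reduction. The problem is Step 2, and it is a genuine gap, not a presentational one. Writing $\varphi_k = \lambda_{\varphi_k}(n)^{-1} T_n \varphi_k$ and expanding $T_n$ geometrically produces, for each $\gamma \in \Gamma \backslash R^{(n)}$, a term of the form $\int \varphi_k(\gamma z)\overline{\varphi_k(z)}\Psi(z) y^k \,\frac{dx\,dy}{y^2}$ (with the appropriate automorphy factor). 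Grouping $\gamma$ by reduced trace does relate the \emph{count} of such $\gamma$ to representations of $t^2-4n$ by a ternary form attached to $R$, but the weights attached to each $\gamma$ are archimedean matrix-coefficient integrals, not constants, and nothing in this expansion makes Hecke eigenvalues $\lambda_{\varphi_k}(m)$ at quadratic values appear. Your displayed reduction to $\sum_t \alpha_\Psi(t)\sum_{t^2-4n\in Q(\mathbb{Z}^3)}\lambda_{\varphi_k}(n)$ is therefore unsubstantiated: converting the compact triple period into quadratic Hecke sums is precisely the hard step, and it requires a period formula input. (There are also secondary problems: $\lambda_{\varphi_k}(n)$ may vanish or be small, so dividing by it, or by an amplifier average $\sum_n f(n/k)\lambda_{\varphi_k}(n)$, is not legitimate without further information; and your Step 3 hypothesis of power savings uniform in $N(k)$ is stronger than the paper's Conjecture \ref{conj:non-split-sums}.)

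What the paper actually does at this point is quite different and is its main content. Since, by Prasad's theorem, the $\PGL_2$ triple product period attached to the Jacquet--Langlands lifts vanishes identically, the paper goes through Watson's formula (Proposition \ref{prop:watson}) and the factorization \eqref{eqn:factor-triple-product}, then chooses an auxiliary real quadratic discriminant $D$ via Friedberg--Hoffstein with prescribed local behavior at $\ram(B)$ (Proposition \ref{prop:friedberg-hoffstein}), forms the base change $\varphi_D$, and applies Ichino's twisted triple product formula (Proposition \ref{prop:twisted-watson}) to relate $L(\ad\pi\times\sigma,1/2)$ to the restriction period $\int \res(\varphi_D)\Psi'$; the local conditions on $D$ ensure the local integrals at ramified primes are nonzero. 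Only then, via the approximate functional equation for periods (Proposition \ref{prop:AFE}) and the Fourier expansion of the Hilbert modular form $\varphi_D$ over $\mathbb{Q}(\sqrt{D})$, do sums $\sum_n \lambda(|n^2 - D\ell^2|/4d_1d_2^2) f_\ell(n/k)$ emerge, to which Conjecture \ref{conj:non-split-sums} applies. If you want to salvage a route closer in spirit to yours, the paper's own sketch (1) in \S\ref{sec:main-ideas-proof}, via metaplectic Shimura-type integrals with an elementary theta function, is the appropriate replacement for your geometric Hecke expansion; the bare pre-trace-formula expansion by itself feeds into variance or sup-norm arguments, not into the quadratic Hecke sums you need.
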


By Watson's formula \cite{watson-2008}, this conjecture follows
from the generalized Lindel{\"o}f hypothesis (or indeed, from
any subconvex bound for the associated family of triple product
$L$-functions \eqref{eq:triple-product-finite-part-intro}).
Sarnak \cite{Sar01} established the special case of this
conjecture in which the $\varphi_k$ are dihedral.  The general
split case (formulated by Luo--Sarnak \cite{luo-sarnak-mass}) is
a celebrated result of Holowinsky--Soundararajan
\cite{MR2680499}.  The general non-split case remains open.

The work of
Holowinsky--Soundararajan synthesizes two complementary methods
developed independently by Soundararajan \cite{soundararajan-2008} and
Holowinsky \cite{MR2680499}.  The method of Soundararajan
applies just as well to the non-split case, while the
method of Holowinsky does not.  The latter method departs
by reducing the problem to suitable
estimates for shifted convolution sums
\begin{equation}\label{eqn:shifted-sums}
  \sum_n
  f(n/k)
  \lambda_{\varphi_k}(n)
  \lambda_{\varphi_k}(n + \ell).
\end{equation}
Here $f \in C_c^\infty(\mathbb{R}^\times_+)$ is a fixed test
function, $\ell$ is a fixed integer, and
$\lambda_{\varphi_k} : \mathbb{N} \rightarrow \mathbb{C}$
describes the Hecke eigenvalues of $\varphi_k$, normalized so that
the Deligne bound reads $|\lambda_{\varphi_k}(p)| \leq 2$ for primes $p$.
The following criterion, established by Luo--Sarnak \cite[Cor
2.2]{luo-sarnak-mass}, clarifies the relationship
between Conjecture
\ref{conj:hque-compact} and bounds for such sums.
\begin{theorem}[Luo--Sarnak]\label{thm:luo-sarnak-criterion}
  Assume that $B$ is split.
  Then
  Conjecture
  \ref{conj:hque-compact}
  holds
  if and only if
  for each $f \in C_c^\infty(\mathbb{R}_+^\times)$
  and $\ell \in \mathbb{Z}$,
  \begin{equation}\label{eq:luo-sarnak-split-sums}
    \lim_{k \rightarrow \infty}
    \frac{\zeta(2)}{k L(\ad \varphi_k, 1)}
    \sum_{n \in \mathbb{N}}
    f(n/k)
    \lambda_{\varphi_k}(n)
    \lambda_{\varphi_k}(n + \ell)
    =
    1_{\ell=0}
    \int_{0}^{\infty} f(y) \, d y.
  \end{equation}
\end{theorem}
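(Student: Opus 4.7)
The plan is to reduce Conjecture \ref{conj:hque-compact} to testing against a spectrally complete family of invariant functions adapted to the cusp at $\infty$, then to use the Fourier expansion there to convert the resulting integrals into the shifted convolution sums \eqref{eqn:shifted-sums}. The natural family is that of incomplete Poincar\'e series
\[
P_\ell(z \mid f) := \sum_{\gamma \in \Gamma_\infty \backslash \SL_2(\mathbb{Z})} f(\Im(\gamma z))\, e\bigl(\ell\, \Re(\gamma z)\bigr), \qquad \ell \in \mathbb{Z},\ f \in C_c^\infty(\mathbb{R}_+^\times).
\]
A standard spectral-and-Sobolev argument shows that the $P_\ell(\cdot \mid f)$, together with the constant functions, span a dense subspace of the bounded continuous functions on $\mathbf{Y}$ in a topology sufficient to characterize the weak-$\ast$ convergence of the probability measures appearing in \eqref{eqn:hque-conj}. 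It therefore suffices to verify \eqref{eqn:hque-conj} for $\Psi = P_\ell(\cdot \mid f)$ and for $\Psi \equiv 1$.

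For fixed $\ell$ and $f$, the next step is to unfold: $\SL_2(\mathbb{Z})$-invariance of $y^k |\varphi_k|^2$ yields
\[
\int_{\mathbf{Y}} y^k |\varphi_k|^2\, P_\ell(\cdot \mid f)\, \frac{dx\,dy}{y^2} = \int_0^\infty f(y)\, y^{k-2} \int_0^1 |\varphi_k(x+iy)|^2 e(-\ell x)\, dx\, dy.
\]
Substituting the Petersson-normalized Fourier expansion $\varphi_k(z) = c_k \sum_{n \geq 1} \lambda_{\varphi_k}(n)\, n^{(k-1)/2}\, e(nz)$ and integrating in $x$ collapses the inner integral to
\[
c_k^2 \sum_n \lambda_{\varphi_k}(n)\, \lambda_{\varphi_k}(n+\ell)\, (n(n+\ell))^{(k-1)/2}\, e^{-2\pi(2n+\ell) y}.
\]
The remaining $y$-integral $\int_0^\infty f(y)\, y^{k-2}\, e^{-2\pi(2n+\ell) y}\, dy$ is a Mellin-type transform concentrated by Stirling near $y \approx (k-1)/(2\pi(2n+\ell))$, evaluating for $n \asymp k$ to $f\bigl(k/(4\pi n)\bigr) \cdot \Gamma(k-1)/(2\pi(2n+\ell))^{k-1} \cdot (1 + O(1/k))$. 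Combining the Gamma factor with $(n(n+\ell))^{(k-1)/2}$ via Stirling and absorbing the rescaling of $n$ into the argument of $f$ produces a constant multiple of $\sum_n f(n/k)\, \lambda_{\varphi_k}(n)\, \lambda_{\varphi_k}(n+\ell)$.

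The denominator $\int y^k |\varphi_k|^2\, \frac{dx\,dy}{y^2}$ is evaluated by Rankin--Selberg unfolding to a constant multiple of $k\, L(\ad \varphi_k, 1)/\zeta(2)$, with matching Gamma factors canceling those produced in the numerator analysis. Dividing yields precisely the normalization $\zeta(2)/(k\, L(\ad \varphi_k, 1))$ of \eqref{eq:luo-sarnak-split-sums}. The right-hand side $1_{\ell = 0}\int_0^\infty f(y)\,dy$ emerges from the same computation: $P_\ell(\cdot \mid f)$ has mean zero on $\mathbf{Y}$ for $\ell \neq 0$, while the $\ell = 0$ mean is identified with $\int_0^\infty f(y)\, dy$ after the Mellin change of variables implicit above, matched against $\Psi \equiv 1$. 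Both directions of the equivalence follow: the unfolding reads off \eqref{eq:luo-sarnak-split-sums} from \eqref{eqn:hque-conj}, while the converse uses density of the $P_\ell(\cdot \mid f)$ together with uniform Sobolev control of the normalized mass measures.

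The principal technical obstacle is the uniform-in-$k$ asymptotic analysis of the Mellin/Gamma integrals -- in particular, showing that the identification with $\sum_n f(n/k)\, \lambda_{\varphi_k}(n)\, \lambda_{\varphi_k}(n+\ell)$ holds in the precise limit form of \eqref{eq:luo-sarnak-split-sums}, uniformly over the tails of the $n$-sum (controlled via Deligne's bound $|\lambda_{\varphi_k}(n)| \leq d(n)$). This is careful Stirling bookkeeping rather than a conceptual obstacle. The decisive feature -- and the reason the method breaks down in the non-split case -- is that every step after the initial spectral reduction relies on the Fourier expansion at the cusp, which has no analogue when $\mathbf{Y}$ is compact.
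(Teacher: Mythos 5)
Your proposal is correct and is essentially the argument the paper relies on: the paper does not reprove this theorem but cites Luo--Sarnak, describing exactly your route (for a Poincar\'e series $\Psi$ attached to $\ell$ and $f$, the left-hand sides of \eqref{eq:luo-sarnak-split-sums} and \eqref{eqn:hque-conj} are asymptotic via unfolding, Fourier expansion and Stirling analysis, while the right-hand sides agree after the change of test function you note). The only points needing care in a full write-up are the ones you flag: the uniform-in-$k$ Laplace/Stirling bookkeeping, the Rankin--Selberg normalization producing $\zeta(2)/(k\,L(\ad\varphi_k,1))$, and the completeness of incomplete Eisenstein ($\ell=0$) and Poincar\'e ($\ell\neq 0$) series, including control of escape of mass on the non-compact quotient.
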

Here and henceforth $L(\dotsb,s)$ denotes the finite part of an
$L$-function, excluding archimedean factors.

The proofs of Theorem \ref{thm:luo-sarnak-criterion} and its
variant due to Holowinsky \cite[Thm 1]{MR2680499} are based upon
an analysis of Fourier expansions and the associated Poincar{\'e} series.
Indeed,
for a certain Poincar{\'e} series $\Psi$ attached to $\ell$ and $f$,
the left hand sides of \eqref{eq:luo-sarnak-split-sums}
and \eqref{eqn:hque-conj} are asymptotic,
while the right hand sides are equal.
In particular, those proofs
are fundamentally limited to the split case.
It is natural to ask
whether some criterion analogous to Theorem
\ref{thm:luo-sarnak-criterion} might exist in the non-split case.
As a first hint, we note that by
Hecke multiplicativity and M{\"o}bius inversion, estimates for
the shifted sums \eqref{eqn:shifted-sums} are substantially
equivalent to those for the expressions
\begin{equation}\label{eqn:quadratic-sums-hecke-eigenvalues}
  \sum_n
  f(n/k)
  \lambda_{\varphi_k}(Q(n))
\end{equation}
when $Q$ is a \emph{reducible} quadratic polynomial of the form
$Q(n) = n(n + \ell)$.
By analogy, one might speculate that the non-split case
of
Conjecture \ref{conj:hque-compact}
should be
related, somehow, to the sums
\eqref{eqn:quadratic-sums-hecke-eigenvalues} for
\emph{irreducible} quadratic polynomials $Q$.  It is perhaps
less clear how one might prove such a relationship.

\subsection{Results}
The main purpose of this article is to confirm the speculation
indicated above.
Informally, our first main result reduces the non-split case of
Conjecture \ref{conj:hque-compact} to upper bounds for
\eqref{eqn:quadratic-sums-hecke-eigenvalues} of the shape
\begin{equation}\label{eqn:informal-estimate-description}
  o_{k \rightarrow \infty}(k L(\ad \varphi_k,1)),
\end{equation}
for irreducible $Q$,
with ``polynomial dependence'' upon $Q$ and $f$.

\begin{definition}
  By an \emph{integer-valued quadratic polynomial} we mean a
  polynomial $Q \in \mathbb{Q}[x]$ of the form
  $Q(x) = a x^2 + b x + c$, with $a \neq 0$, such that
  $Q(n) \in \mathbb{Z}$ for all $n \in \mathbb{Z}$.
  Such a polynomial is irreducible precisely
  when it has no rational roots.
  We set
  $\|Q\| := \max(|a|,|b|,|c|)$.
\end{definition}

\begin{definition}\label{defn:sobolev-norms}
  For $f \in C_c^\infty(\mathbb{R}^\times_+)$, we write $f'$
  for the derivative, $D f(y) := y f'(y)$ for the invariant
  derivative, and $\mathcal{S}_N(f)$ for
  the Sobolev norms
  defined for
  $N \in \mathbb{Z}_{\geq 0}$ by the formula
  \begin{equation}\label{eqn:}
    \mathcal{S}_N(f)
    :=
    \max _{j \leq N} \sup_{y \in \mathbb{R}^\times_+}
    (y + 1/y)^N |D^j f (y)|.
  \end{equation}
  \label{defn:}
\end{definition}

We recall a special case of a result of
Templier--Tsimerman \cite[Thm 1]{MR3096570},
generalizing and extending earlier work of Blomer
\cite{MR2435749}
and Templier \cite{MR2473297}.
\begin{theorem}[Templier--Tsimerman]\label{thm:templier-tsimerman}
  Let $\pi$ be a non-dihedral
    cuspidal automorphic
  representation of $\GL_2$.  Write
  $L(\pi,s) = \sum_{n \geq 1} \lambda_{\varphi}(n)/n^s$.  Let $Q$ be a
  non-split polynomial of the form $Q(n) = n^2 - d$ for some
  (non-zero, non-square) integer $d$.
  Assume that $f$ is supported in the interval $(1,2)$
  and that $\mathcal{S}_N(f) \ll_N 1$ for each fixed $N$.
  Then for $\eps > 0$ and $X \gg |d|^{1/2}$,
  \begin{equation}\label{eqn:templier-tsimerman}
    \sum_{n}
    \lambda_{\varphi}(Q(n)) f(n/X)
    \ll_{\varphi, \eps}
    X^{1/2+\eps} |d|^{\delta}
    \left(\frac{X}{|d|^{1/2}} \right)^{\vartheta}.
  \end{equation}
\end{theorem}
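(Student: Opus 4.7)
The plan is to attack $S := \sum_n \lambda_{\varphi}(n^2-d) f(n/X)$ as a shifted convolution sum with shift $-d$, and to analyze it by combining the Duke--Friedlander--Iwaniec $\delta$-symbol with Voronoi summation on $\GL_2$ and Weil bounds for Kloosterman sums. First, I would rewrite
\[
  S = \sum_{m,n} \lambda_{\varphi}(m) \, f(n/X) \, 1_{m = n^2 - d}
\]
and expand the indicator through the $\delta$-symbol with auxiliary modulus $\asymp X$, decomposing $S$ into pieces parameterized by a modulus $c \ll X$ and a primitive residue $a \bmod c$. The non-split hypothesis on $Q$ guarantees that no diagonal main term survives and the problem reduces to bounding the oscillatory remainder.

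Next, I would apply Voronoi summation on the $m$-sum: each inner sum $\sum_m \lambda_{\varphi}(m) e(am/c) g(m)$, with $g$ a smooth cutoff, becomes a Hankel--Bessel transform of a dual sum $\sum_{\tilde m} \lambda_{\varphi}(\tilde m) e(-\bar a \tilde m / c)$ effectively of length $\sim c^2 / X$. In parallel, Poisson summation on the $n$-sum produces quadratic Gauss sums of modulus $c$ times an oscillatory integral tractable by stationary phase. Combining the Voronoi and Poisson sides, the arithmetic factors amalgamate into classical Kloosterman sums $S(\cdot,\cdot;c)$, which are bounded by $c^{1/2+\eps}$ via Weil.

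Summing over $c \ll X$ then yields the target exponent $X^{1/2+\eps}$, while the quadratic Gauss sums account for the polynomial $|d|^{\delta}$ factor. The residual $(X/|d|^{1/2})^{\vartheta}$ appears when one invokes the pointwise bound $\lambda_{\varphi}(\tilde m) \ll \tilde m^{\vartheta}$ (Kim--Sarnak toward Ramanujan) to estimate trivially the dual $\tilde m$-sum of length $\sim c^2/X$, which is not short enough to handle losslessly. The hardest step I anticipate is tracking the dependence on $d$ with the claimed polynomial exponent: this requires careful treatment of the conductor drop at moduli $c$ sharing factors with $d$, together with uniform stationary phase estimates for the composite archimedean integral on the dual side, whose phase is governed by $Q$ and whose behaviour changes markedly across the critical threshold $X \sim |d|^{1/2}$.
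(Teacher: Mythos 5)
The paper does not actually prove this statement: it is imported verbatim (as a special case) from Templier--Tsimerman \cite{MR3096570}, and the role of your write-up would therefore be to reconstruct their argument. Their proof is not a $\delta$-symbol argument at all: it realizes the sum $\sum_n \lambda_\varphi(n^2-d)f(n/X)$ as a Fourier coefficient of the product of the cusp form with an elementary theta series (exactly the structure recalled in approach (1) of \S\ref{sec:main-ideas-proof} of this paper), pairs it against a metaplectic Poincar\'e series, and spectrally decomposes in half-integral weight. In that decomposition the factor $|d|^{\delta}$ is a bound for the $d$-th Fourier coefficient of half-integral weight forms (via Waldspurger/Iwaniec/Duke and ultimately the cubic-moment subconvexity, whence $\delta\leq 1/6$), and $(X/|d|^{1/2})^{\vartheta}$ comes from bounds towards Ramanujan for the integral-weight spectrum. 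So your route is genuinely different from the cited one, and, as sketched, it does not deliver the stated bound.

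Two concrete problems. First, the bookkeeping: since $m=n^2-d\asymp X^2$ on the support of $f$ (recall $X\gg|d|^{1/2}$ and $f$ supported in $(1,2)$), Voronoi at modulus $c\ll X$ dualizes the $m$-sum to length $\asymp c^2/X^2$, not $c^2/X$; your proposed origin of the factor $(X/|d|^{1/2})^{\vartheta}$ --- a trivial estimate with $\lambda_\varphi(\tilde m)\ll \tilde m^{\vartheta}$ on a dual sum of length $c^2/X$ --- therefore evaporates, and nothing in the sketch explains why the saving should degrade precisely at the threshold $X\asymp|d|^{1/2}$. Second, and more seriously, the exponent $\delta$ in \eqref{eqn:templier-tsimerman} is \emph{by definition} a bound towards Ramanujan in half-integral weight; it cannot be produced by ``quadratic Gauss sums,'' which have modulus $\sqrt{c}$ and carry no such $d$-dependence, nor by the Weil bound for Kloosterman sums. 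After you combine Poisson on $n$ (Gauss sums) with Voronoi on $m$, the complete sums modulo $c$ are Sali\'e-type sums; bounding them individually by Weil and summing over $c\ll X$ loses the claimed $d$-uniformity, and extracting the extra cancellation in the $c$-aspect that would recover it is essentially equivalent to the half-integral-weight Fourier coefficient bounds that Templier--Tsimerman invoke directly. In short, the proposal is a plausible opening move for a shifted-convolution problem, but the two factors $|d|^{\delta}$ and $(X/|d|^{1/2})^{\vartheta}$ are attributed to steps that do not produce them, so the stated estimate is not proved by this outline.
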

The exponents $\vartheta, \delta$ are described in \cite[Thm
1]{MR3096570}; they quantify bounds towards the Ramanujan
conjecture in both integral and half-integral weight, for which
the latest records are $\vartheta \leq 7/64$ \cite{MR1937203,
  MR2811610} and $\delta \leq 1/6$
\cite{MR1779567, MR3394377, Nelson-EisCubic}.

Because the estimate \eqref{eqn:templier-tsimerman} depends in
an unspecified manner upon the eigenform $\varphi$, it does not
apply to sums like
\eqref{eqn:quadratic-sums-hecke-eigenvalues} in which the
eigenform and the length of summation vary simultaneously.
We
might nevertheless expect, in the absence of obvious
evidence to the contrary, that the sums
\eqref{eqn:quadratic-sums-hecke-eigenvalues} enjoy square-root
cancellation.
In particular, we might extrapolate Theorem
\ref{thm:templier-tsimerman} to the following
conjecture.
\begin{conjecture}\label{conj:non-split-sums}
  Assume that the $\varphi_k$ are non-dihedral.\footnote{ In
    fact, since we have restricted to $\varphi_k$ of full level,
    the non-dihedrality assumption is automatic.  The conjecture
    generalizes to the case of higher levels, where such an
    assumption is relevant.  } There exists $A \geq 0$ so that
  for all sequences of irreducible integer-valued quadratic
  polynomials $Q_k$ and test functions
  $f_k \in C^\infty(\mathbb{R}^\times_+)$ satisfying
  $\mathcal{S}_N(f_k) \ll_N 1$ for each fixed $N$,
  \begin{equation}\label{eqn:non-split-sums-conj}
    \lim_{k \rightarrow \infty}
    \frac{
      \sum _{n}
      \lambda_{\varphi_k}(|Q_k(n)|) f_k(n/k)
    }{
      k L(\ad \varphi_k, 1)
      \|Q_k\|^A
    }
    = 0.
  \end{equation}
\end{conjecture}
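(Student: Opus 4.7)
I note at the outset that Conjecture \ref{conj:non-split-sums} is offered as open; at present I see no complete route to it, and what follows outlines the natural attack along with the point where it breaks. Begin with a normalization: for $Q_k(n) = a_k n^2 + b_k n + c_k$, completing the square and substituting $m = 2 a_k n + b_k$ gives $4 a_k Q_k(n) = m^2 - D_k$ with $D_k = b_k^2 - 4 a_k c_k$ not a perfect square. Peeling off $4 a_k$ by Hecke multiplicativity together with a standard sieve costs at most $\|Q_k\|^{O(1)}$, which is absorbed into the permitted factor $\|Q_k\|^A$. The problem thus reduces to bounding
\[
\sum_m f(m/X)\, \lambda_{\varphi_k}(m^2 - D),
\]
with $X \asymp k$, $|D| \ll \|Q_k\|^{O(1)}$, and $f$ a test function whose Sobolev norms grow at most polynomially in $\|Q_k\|$.

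Next, invoke the Shimura--Waldspurger correspondence. Let $g_{\varphi_k}$ be a half-integral weight form in the Shimura preimage of $\varphi_k$; Waldspurger's formula ties its squared Fourier coefficients $|c_{g_{\varphi_k}}(n)|^2$ to twisted central values $L(\varphi_k \otimes \chi_n, 1/2)$, with explicit adjoint and archimedean factors. Via the associated theta kernel, the sum above can be unfolded---in the manner carried out in \cite{MR3096570}---into a short weighted moment of such central values, up to archimedean pieces that remain controllable for our class of test functions.

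Finally, apply subconvexity for $L(\varphi_k \otimes \chi_D, 1/2)$: the convexity bound is $(k|D|)^{1/2+\eps}$, and any polynomial-in-$k$ saving over convexity would suffice for \eqref{eqn:non-split-sums-conj}. This is exactly the mechanism by which Templier--Tsimerman produce \eqref{eqn:templier-tsimerman} for fixed $\varphi$. \emph{The main obstacle} is uniformity in the weight $k$: existing bounds for $\sum_m \lambda_\varphi(m^2 - D)$, including those of \cite{MR3096570}, leave the $\varphi$-dependence unspecified and so cannot be invoked with $\varphi = \varphi_k$. A proof appears to require one of (i) hybrid subconvexity for $L(\varphi_k \otimes \chi_D, 1/2)$ with explicit polynomial dependence on $k$ and $|D|$, (ii) a Petersson-type amplified second moment over the weight-$k$ family that distributes the loss evenly across $\varphi_k$, or (iii) a new spectral identity tailored to the non-split setting, of the kind developed in the variance series \cite{MR3356036, nelson-variance-73-2, nelson-variance-II, nelson-variance-3} on which this paper builds. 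Supplying such an input is, in effect, the substance of the open problem that the conjecture encodes.
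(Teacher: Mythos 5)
You are right that this statement is a conjecture, not a theorem: the paper offers no proof of Conjecture \ref{conj:non-split-sums}, and indeed its whole point (Theorem \hyperlink{thm:impl}{A}) is to show that this unproven hypothesis would imply Conjecture \ref{conj:hque-compact}. So there is no proof in the paper to compare yours against, and your assessment of the status is accurate. Your diagnosis of the obstruction also matches the paper's own discussion: the paper emphasizes that the Templier--Tsimerman bound \eqref{eqn:templier-tsimerman} ``depends in an unspecified manner upon the eigenform $\varphi$'' and hence cannot be applied when $\varphi_k$ and the summation length $\asymp k$ vary together, and the only unconditional inputs it supplies are the trivial bound \eqref{eqn:trivial-bound} and the sieve bound of Theorem \ref{thm:sieve-bound}, with the stronger Conjecture \ref{conj:strong-abs-value-bound} left to Sato--Tate-type heuristics (Remark \ref{rmk:sufficiently-uniform-sato-tate}).

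Two caveats on your sketch of the ``natural attack.'' First, the claim that hybrid subconvexity for the single twisted value $L(\varphi_k \otimes \chi_D, 1/2)$ would suffice is not substantiated: in the theta/Waldspurger unfolding, the $\varphi_k$-dependence does not localize in one such central value. In the half-integral-weight spectral decomposition the relevant family is the full spectrum at weight $\approx k/2$ (with $\varphi_k$ entering through inner products against $\overline{\varphi_k'}\,\theta$), and in the paper's own metaplectic sketch (\S\ref{sec:main-ideas-proof}, item (1)) the governing central value is $L(\ad \varphi_k \times \Psi, 1/2)$ in the $k$-aspect, which is a genuinely harder (degree six, conductor $\asymp k^4$) subconvexity problem; so the input you name is weaker than what that route actually requires. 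Second, the reduction ``peeling off $4a_k$ costs at most $\|Q_k\|^{O(1)}$'' glosses over the fact that $\lambda_{\varphi_k}(|4a_kQ_k(n)|)$ and $\lambda_{\varphi_k}(|Q_k(n)|)$ are related by Hecke relations only after controlling common factors with $4a_k$, and the resulting sums run over residue classes $m \equiv b_k \ (2a_k)$; this is fixable by the kind of M\"obius/progression manipulations the paper itself uses in \S\ref{sec:completion-proof}, but as written it is an assertion rather than an argument. Neither caveat affects your main conclusion, which is the correct one: supplying an estimate of the required uniformity is precisely the open problem the conjecture encodes.
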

The quantities $Q_k$ and $f_k$ in
Conjecture \ref{conj:non-split-sums}
are allowed to vary
with $k$, but standard estimates for $|\lambda_{\varphi}|$ and
$L(\ad \varphi_k, 1)$ show that
the desired estimate \eqref{eqn:non-split-sums-conj}
is nontrivial only if they vary at most mildly.
Conjecture \ref{conj:non-split-sums} should thus be regarded as
a mild strengthening of the true analogue of
\eqref{eq:luo-sarnak-split-sums} obtained by taking $Q_k$ and
$f_k$ independent of $k$.
For
instance, the Deligne bound
$|\lambda_{\varphi}(n)| \leq \tau(n)$ (here $\tau$ denotes the
divisor function), the divisor bound
$\tau(n) \ll_{\eps} n^{\eps}$ and the Hoffstein--Lockhart bound
$L(\ad \varphi_k, 1) \gg_{\eps} k^{-\eps}$
give what one might call the trivial bound
\begin{equation}\label{eqn:trivial-bound}
  \frac{
    \sum _{n}
    \left\lvert
      \lambda_{\varphi_k}(|Q_k(n)|) f_k(n/k)
    \right\rvert
  }{
    k L(\ad \varphi_k, 1)
  } \ll_{\eps} k^\eps.
\end{equation}
Using \eqref{eqn:trivial-bound}, or even the much weaker
estimate obtained by replacing $k^\eps$ with $k^{O(1)}$, we may
reduce the verification of \eqref{eqn:non-split-sums-conj} to
the case that for all $\eps > 0$,
\[
  \|Q_k\| \ll_{\eps} k^{\eps},
  \quad 
  \supp(f_k) \subseteq [k^{-\eps}, k^{\eps}] \text{ for } k \geq k_0(\eps).
\]
We may reduce further using the following
more precise estimate:

\begin{theorem}[Sieve bound]\label{thm:sieve-bound}
  Under the hypotheses of
  Conjecture \ref{conj:non-split-sums},
  we have
  \begin{equation}\label{eqn:sieve-bound-second-estimate}
  \frac{
    \sum _{n}
    \left\lvert
      \lambda_{\varphi_k}(|Q_k(n)|) f_k(n/k)
    \right\rvert
  }{
    k L(\ad \varphi_k, 1)
  } \ll_{\eps}
  \|Q_k\|^{\O(1)} (\log k)^{1/2+\eps},
\end{equation}
where all implied constants
are absolute.
\end{theorem}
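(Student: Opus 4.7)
The plan is to apply Nair's extension of Shiu's theorem for nonnegative multiplicative functions along polynomial sequences, and then to treat the resulting prime sum using the elementary inequality $2x \leq x^2 + 1$ and standard Mertens-type estimates. The hypothesis $\mathcal{S}_N(f_k) \ll_N 1$ forces rapid decay $|f_k(y)| \ll_N (y+1/y)^{-N}$, so after a dyadic decomposition it suffices to bound, for each $X \asymp k$,
\[
S(X) := \sum_{n \sim X} |\lambda_{\varphi_k}(|Q_k(n)|)|,
\]
with ranges far from $k$ contributing only a geometric tail. Since $g(m) := |\lambda_{\varphi_k}(m)|$ is a nonnegative multiplicative function with $g(m) \leq \tau(m)$, Nair's theorem (applied to the polynomial $Q_k$) yields
\[
S(X) \ll \frac{X}{\log X} \exp\!\Bigl(\sum_{p \leq X} \tfrac{g(p)\rho_{Q_k}(p)}{p}\Bigr)\, \|Q_k\|^{O(1)},
\]
where $\rho_{Q_k}(p) := \#\{n \bmod p : Q_k(n) \equiv 0 \bmod p\}$, and the polynomial factor $\|Q_k\|^{O(1)}$ absorbs the contributions at primes dividing $\disc(Q_k)$ and at higher prime powers.

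Next, applying $2g(p) \leq g(p)^2 + 1$ at each prime, together with the Hecke identity $g(p)^2 = 1 + \lambda_{\ad\varphi_k}(p)$ and the factorization $\rho_{Q_k}(p) = 1 + \chi_{Q_k}(p)$ at primes of good reduction (where $\chi_{Q_k}$ is the nontrivial quadratic character attached to $\disc(Q_k)$, well defined because $Q_k$ is irreducible), the prime sum is bounded by
\[
\log \log X + \tfrac{1}{2}\log L(\ad\varphi_k, 1) + \tfrac{1}{2}\log L(\ad\varphi_k \otimes \chi_{Q_k}, 1) + O(\log \|Q_k\|),
\]
via Mertens' theorem and the standard identification of twisted prime sums with the logarithm of the corresponding $L$-value at $s=1$. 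Exponentiating yields
\[
S(X) \ll X \sqrt{L(\ad\varphi_k, 1)\, L(\ad\varphi_k \otimes \chi_{Q_k}, 1)}\, \|Q_k\|^{O(1)}.
\]

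Dividing by $k L(\ad\varphi_k, 1) \asymp X L(\ad\varphi_k, 1)$ produces
\[
\frac{S(X)}{k L(\ad\varphi_k, 1)} \ll \sqrt{\frac{L(\ad\varphi_k \otimes \chi_{Q_k}, 1)}{L(\ad\varphi_k, 1)}}\, \|Q_k\|^{O(1)}.
\]
The Hoffstein--Lockhart lower bound $L(\ad\varphi_k, 1) \gg_\eps (\log k)^{-\eps}$ combined with a convexity-type upper bound $L(\ad\varphi_k \otimes \chi_{Q_k}, 1) \ll (\log k)^{1+\eps} \|Q_k\|^{O(\eps)}$, coming from standard estimates on $\Re(s)=1$ for the $\GL_3$ Rankin--Selberg $L$-functions involved, then combine to produce the claimed exponent $(\log k)^{1/2+\eps}$.

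The main obstacle is extracting a genuinely \emph{polynomial} (rather than unspecified) dependence on $\|Q_k\|$ from the Shiu--Nair machinery: standard formulations suppress this dependence, so one has to revisit the sieve argument and carefully track the contributions of primes dividing $\disc(Q_k)$ and of higher prime powers.
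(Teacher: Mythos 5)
The front end of your proposal coincides with the paper's actual proof (given in \S\ref{sec:decay-norm-base}): a Shiu--Nair sieve bound for $\sum_{n \le x} |\lambda_{\varphi_k}(|Q_k(n)|)|$ with local densities $\rho_{Q_k}(p) = 1 + \chi_{Q_k}(p)$, then division by $L(\ad \varphi_k,1)$. Two caveats already there. First, the polynomial dependence on $\|Q_k\|$, which you flag as ``the main obstacle'', is not optional: the paper re-runs Nair's sieve in the proof of Proposition \ref{prop:twisted-holowinsky} (culminating in \eqref{eqn:sum-f-of-Q-after-discarding-large-primes}) precisely to obtain it, so as written your argument is incomplete at that point. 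Second, the ``standard identification of twisted prime sums with $\log L(\cdot,1)$'' is available in the direction you need only through the Holowinsky--Soundararajan lower bound \eqref{eqn:sound-estimate-L-ad-pi-1} (and, for the $\chi_{Q_k}$-twisted sum, a version of it uniform in the conductor); this should be invoked explicitly, not treated as a Mertens-type triviality.

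The decisive gap is in your endgame. Having arrived at $\sqrt{L(\ad \varphi_k \otimes \chi_{Q_k},1)/L(\ad \varphi_k,1)}$, you close with two asserted inputs, neither of which is known: (i) $L(\ad \varphi_k,1) \gg_{\eps} (\log k)^{-\eps}$, attributed to Hoffstein--Lockhart --- but Hoffstein--Lockhart (with the Goldfeld--Hoffstein--Lieman appendix) gives only $\gg (\log k)^{-1}$, and the paper itself invokes only $\gg_\eps k^{-\eps}$; and (ii) $L(\ad \varphi_k \otimes \chi_{Q_k},1) \ll (\log k)^{1+\eps} \|Q_k\|^{\O(\eps)}$, which is not a standard edge estimate: for a degree-three $L$-function satisfying Ramanujan the standard unconditional bound at $\Re(s)=1$ is $\ll (\log (k \|Q_k\|))^{3}$, and improving this to $(\log\log)^{\O(1)}$ or $(\log)^{1+\eps}$ requires a short-Euler-product approximation at $s=1$, i.e.\ a zero-free region for the twisted symmetric square uniform in both $k$ and the quadratic twist --- a substantive extra input. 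With the bounds that are actually available, your chain yields only $\|Q_k\|^{\O(1)} (\log k)^{2+\eps}$, far from the claimed exponent $1/2$. The paper avoids this entirely: after dividing the sieve bound --- whose Euler product runs only over primes split in $\mathbb{Q}(\sqrt{\disc Q_k})$ --- by \eqref{eqn:sound-estimate-L-ad-pi-1}, it bounds the resulting exponent pointwise, using $2|\lambda(p)| - |\lambda(p)|^2 \le 1$ at split primes and $|\lambda(p)|^2 \ge 0$ at inert primes, so that the only growth left is $\sum_{\text{split}\ p \le k} 1/p \sim \tfrac12 \log\log k$ plus an $\O(\log \|Q_k\|)$ from exceptional primes; no upper bound for any twisted $L$-value at $s=1$ is ever needed. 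To salvage your route you would either have to import that pointwise split/inert argument (at which point you have reproduced the paper's proof), or supply the missing uniform upper bound for $L(\ad \varphi_k \otimes \chi_{Q_k},1)$, which is a genuinely harder statement than Theorem \ref{thm:sieve-bound} itself requires.
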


The proof is recorded in \S\ref{sec:decay-norm-base}, following
that of Proposition \ref{prop:twisted-holowinsky}.
This bound
allows us to reduce
the verification of \eqref{eqn:non-split-sums-conj} to
the case that
\begin{equation}\label{eqn:}
  \|Q_k\| \ll_{\eps} (\log k)^{\eps},
  \quad 
  \supp(f_k) \subseteq [(\log k)^{-\eps}, (\log
  k)^{\eps}] \text{ for } k \geq k_0(\eps).
\end{equation}

We do not know how to improve upon
\eqref{eqn:sieve-bound-second-estimate}
unconditionally.
We expect the following  stronger estimate.
\begin{conjecture}\label{conj:strong-abs-value-bound}
  There exists $\delta > 0$
  so that
  under the hypotheses of
  Conjecture \ref{conj:non-split-sums},
  \begin{equation}\label{eqn:sieve-bound-second-estimate-2}
    \frac{
      \sum _{n}
      \left\lvert
        \lambda_{\varphi_k}(|Q_k(n)|) f_k(n/k)
      \right\rvert
    }{
      k L(\ad \varphi_k, 1)
    } \ll_{\eps}
    \|Q_k\|^{\O(1)} (\log k)^{-\delta}.
  \end{equation}
\end{conjecture}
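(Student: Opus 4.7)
The plan is to replace the Cauchy--Schwarz step underlying Theorem~\ref{thm:sieve-bound} by a direct multiplicative-function argument along the lines of Holowinsky \cite{MR2680499}. Let $X := k$ and write $\rho_{Q_k}(d) := \#\{x \bmod d : Q_k(x) \equiv 0\}$. For an irreducible $Q_k$ of discriminant $\Delta_k$, one has $\rho_{Q_k}(p) = 1 + \chi_{\Delta_k}(p)$ at primes $p \nmid \Delta_k$, where $\chi_{\Delta_k}$ is the associated quadratic Kronecker character. First I would invoke a Nair--Tenenbaum type upper bound for sums of the multiplicative function $|\lambda_{\varphi_k}|$ evaluated along the polynomial sequence $|Q_k(n)|$; the inequality $|\lambda_{\varphi_k}(n)| \leq \tau(n) \leq 2^{\omega(n)}$ places this function in the admissible $\tau$-class. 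The result has the shape
\[
\sum_n |\lambda_{\varphi_k}(|Q_k(n)|)| f_k(n/k) \ll k \, \|Q_k\|^{\O(1)} \prod_{p \leq X}\left(1 - \frac{\rho_{Q_k}(p)}{p}\right)\sum_{j \geq 0} \frac{\rho_{Q_k}(p^j) |\lambda_{\varphi_k}(p^j)|}{p^j}.
\]

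Next, the denominator $k L(\ad \varphi_k, 1)$ admits a parallel Euler-product expression via the Rankin--Selberg identity $L(\varphi_k \otimes \bar\varphi_k, s) = \zeta(s) L(\ad \varphi_k, s)$. Up to bounded factors and Mertens-type asymptotics,
\[
k L(\ad \varphi_k, 1) \asymp \frac{k}{\log X} \prod_{p \leq X}\left(1 + \frac{|\lambda_{\varphi_k}(p)|^2}{p} + \dotsb\right).
\]
Dividing, taking logarithms and absorbing the lower-order higher-prime-power contributions should yield a bound
\[
\frac{\sum_n |\lambda_{\varphi_k}(|Q_k(n)|)| f_k(n/k)}{k L(\ad \varphi_k, 1)} \ll_{\eps} \|Q_k\|^{\O(1)} \exp(-S_k(X)),
\]
where
\[
S_k(X) := \sum_{p \leq X} \frac{1 + |\lambda_{\varphi_k}(p)|^2 - (1+\chi_{\Delta_k}(p))|\lambda_{\varphi_k}(p)|}{p}.
\]
Each summand is non-negative: it equals $(|\lambda_{\varphi_k}(p)|-1)^2/p$ at primes split in $\mathbb{Q}(\sqrt{\Delta_k})$ and equals $(1+|\lambda_{\varphi_k}(p)|^2)/p \geq 1/p$ at inert primes.

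Thus Conjecture~\ref{conj:strong-abs-value-bound} reduces to a lower bound $S_k(X) \geq \delta \log\log X + \O(\log\|Q_k\|)$ for an absolute $\delta > 0$, uniformly in $k$. The inert primes alone contribute at least $\sum_{p \leq X,\, \chi_{\Delta_k}(p) = -1} 1/p$, which by standard Dirichlet density arguments equals $\tfrac{1}{2}\log\log X + \O(\log L(\chi_{\Delta_k}, 1)^{-1})$.

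The main obstacle is to furnish such a lower bound uniformly as $\Delta_k$ ranges through conductors as large as $(\log k)^{\O(1)}$. Siegel's theorem gives $L(\chi_{\Delta_k},1) \gg_\eta |\Delta_k|^{-\eta}$, but the implied constant is ineffective, which compromises any attempt to extract an absolute $\delta$; even a Linnik-type effective quasi-Siegel bound would suffice here, but it is not available unconditionally. A second difficulty lies in executing the Nair--Tenenbaum step with uniformity in both $f_k$ and $Q_k$: existing formulations for polynomial arguments are typically sharp only for fixed $Q_k$, so confining all $Q_k$-dependence to the factor $\|Q_k\|^{\O(1)}$ requires sharpening the ramified-prime contributions and the implied constants in a manner compatible with the desired $(\log k)^{-\delta}$ saving. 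The combination of these two uniformity issues --- quadratic $L$-values and polynomial Nair--Tenenbaum --- is what presently obstructs an unconditional proof.
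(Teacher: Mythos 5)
The statement you are trying to prove is stated in the paper as a \emph{conjecture}, and the paper explicitly says it does not know how to establish it: unconditionally it proves only the much weaker Theorem \ref{thm:sieve-bound}, with $(\log k)^{1/2+\eps}$ (growth, not decay) on the right-hand side, and it motivates Conjecture \ref{conj:strong-abs-value-bound} by hypothetical uniform forms of Sato--Tate (Remark \ref{rmk:sufficiently-uniform-sato-tate}). So there is no proof in the paper to match, and your proposal --- which you yourself concede is obstructed at the end --- does not close the gap.

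More importantly, your reduction contains a concrete sign/bookkeeping error which makes the obstruction look milder than it is. When you divide the Nair-type bound by $kL(\ad\varphi_k,1)$, the lower bound for the denominator is only of the shape $\frac{k}{\log k}\exp\sum_{p\leq k}|\lambda(p)|^2/p$ (up to $(\log\log k)^{\O(1)}$), and the resulting Mertens factor $\log k\asymp\exp\sum_{p\leq k}1/p$ must be distributed over \emph{all} primes. Carrying this out correctly, the exponent is
\begin{equation*}
  -\sum_{\text{split } p\leq k}\frac{(1-|\lambda(p)|)^2}{p}
  \;+\;\sum_{\text{inert } p\leq k}\frac{1-|\lambda(p)|^2}{p},
\end{equation*}
exactly as in the paper's display \eqref{eq:nonsplit-holowinsky-target}: the inert primes contribute with summand $+\bigl(1-|\lambda(p)|^2\bigr)/p$, which is \emph{harmful} when $|\lambda(p)|<1$, not the helpful term $-\bigl(1+|\lambda(p)|^2\bigr)/p$ appearing in your $S_k(X)$. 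Consequently your claim that the summands are non-negative and that the inert primes alone force a saving of $\tfrac12\log\log k$ is false; in the unexcluded scenario $|\lambda(p)|\approx 1$ at split primes and $|\lambda(p)|\approx 0$ at inert primes, the correctly computed bound is of size $(\log k)^{1/2}$, which is precisely the content and the limit of Theorem \ref{thm:sieve-bound}. The genuine missing ingredient is therefore not Siegel-type ineffectivity nor Nair--Tenenbaum uniformity --- the allowed loss $\|Q_k\|^{\O(1)}$ comfortably absorbs the effective bound $L(1,\chi_{\Delta_k})\gg|\Delta_k|^{-1/2}$ since $|\Delta_k|\ll\|Q_k\|^{\O(1)}$, and the paper's proof of Proposition \ref{prop:twisted-holowinsky} already supplies the required uniform sieve input --- but rather unconditional control of $|\lambda_{\varphi_k}(p)|$ restricted to inert primes (a uniform Sato--Tate-type statement), which is exactly why the paper leaves this as a conjecture; see also the discussion surrounding \eqref{eqn:holowinsky-target-2}--\eqref{eq:nonsplit-holowinsky-target-2}.
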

Conjecture \ref{conj:strong-abs-value-bound}
is motivated
by hypothetical uniform forms of the Sato--Tate
conjecture (see Remark
\ref{rmk:sufficiently-uniform-sato-tate} and \cite{MR2484279}).  Clearly Conjecture
\ref{conj:strong-abs-value-bound} implies Conjecture
\ref{conj:non-split-sums}.

We may now state our first main result:
\begin{theoremA}
  \hypertarget{thm:impl}{}
  Conjecture \ref{conj:non-split-sums}  implies  Conjecture \ref{conj:hque-compact}.
\end{theoremA}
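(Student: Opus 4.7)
The idea is to adapt the Luo--Sarnak architecture to the compact case, replacing the Fourier expansion of a Poincar{\'e} series (unavailable on non-split $\mathbf{Y}$) by a substitute built intrinsically from the quaternionic arithmetic of $R$. The first step would be a spectral reduction: using the Rankin--Selberg computation $\int_{\mathbf{Y}} y^k |\varphi_k|^2 \asymp (\Gamma(k-1)/(4\pi)^{k-1}) L(\ad\varphi_k, 1)$ and a density argument over Hecke--Maass cuspidal eigenforms $\phi$ on $\mathbf{Y}$, Conjecture \ref{conj:hque-compact} reduces to the matrix-coefficient bound
\[
\langle y^k|\varphi_k|^2, \phi\rangle \;=\; o\bigl((\Gamma(k-1)/(4\pi)^{k-1})\, L(\ad\varphi_k, 1)\bigr) \quad (k\to\infty)
\]
for each fixed $\phi$, uniformly as $\phi$ ranges over any spanning family of the cuspidal spectrum.

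Next, for each irreducible integer-valued quadratic $Q$ and test function $f \in C_c^\infty(\mathbb{R}_+^\times)$, I would construct a test function $P_{Q,f}$ on $\mathbf{Y}$ as a weighted sum, over $R^{(1)}$-orbits of elements $\alpha \in R$ with reduced trace $n$ in the support of $f(\cdot/k)$ and reduced norm $|Q(n)|$, of a weight-$k$ spherical / Bergman kernel evaluated at translates of a basepoint adapted to the quadratic subalgebra $\mathbb{Q}(\alpha) \subset B$. Equivalently, $P_{Q,f}$ can be viewed as a theta kernel arising in the Shimizu / see-saw dual pair attached to $B$, twisted so that the norm form on a chosen rank-one sublattice realizes $Q$. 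Unfolding the pairing and using Hecke multiplicativity should produce an identity of the form
\[
\langle y^k|\varphi_k|^2, P_{Q,f}\rangle \;=\; c_k\!\sum_n f(n/k)\,\lambda_{\varphi_k}(|Q(n)|) \;+\; (\text{lower order}),
\]
with $c_k$ an explicit normalization constant of order $\Gamma(k-1)/(4\pi)^{k-1}$ up to polynomial factors in $k$, calibrated so that Conjecture \ref{conj:non-split-sums} applied to $P_{Q_k,f_k}$ will give exactly the decay required by the spectral reduction.

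Finally, by varying $Q$ over quadratic subalgebras of $B$ and $f$ over archimedean profiles, the family $\{P_{Q,f}\}$ should be rich enough to weakly approximate any fixed $\phi$ in a topology adapted to the matrix-coefficient bound; this spectral completeness is equivalent to a non-vanishing statement for suitable quadratic periods of $\phi$, a Waldspurger-type input. Combining this with Conjecture \ref{conj:non-split-sums} applied to $P_{Q_k,f_k}$ with $Q_k, f_k$ varying slowly with $k$, and invoking Theorem \ref{thm:sieve-bound} to absorb the logarithmic dependence on $\|Q_k\|$ and $\supp f_k$, should yield the sought-for decay of $\langle y^k|\varphi_k|^2, \phi\rangle$, completing the reduction.

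The principal difficulty will be engineering the unfolding identity of the second paragraph: without a cusp at which to Fourier-expand, one has to set up a \emph{genuine} equality between the spectral pairing and the arithmetic quadratic sum, matching weight-$k$ matrix coefficients on $\SL_2(\mathbb{R})$ against Eichler-type orbital integrals over elements of $R$ lying in a fixed quadratic subalgebra, and pinning down the correct normalization $c_k$. A secondary but non-trivial obstacle is the spectral-completeness step of the final paragraph, which rests on a non-vanishing assertion for quadratic periods of every cuspidal $\phi$ — available in principle from the theory of Waldspurger/Shimura lifts, but requiring real care to make quantitative and uniform enough in $\phi$ for the density argument to close.
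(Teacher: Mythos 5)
Your opening reduction (density of constants plus Hecke--Maass cusp forms on the compact quotient, then a matrix-coefficient bound for each fixed $\phi$) matches the paper, and your kernel $P_{Q,f}$ built from $\alpha \in R$ with prescribed reduced trace and norm is in the spirit of the theta-correspondence route the paper mentions but deliberately does not pursue. However, the two steps on which your argument actually rests are only asserted, and each conceals the real content. For the unfolding identity: pairing $y^k|\varphi_k|^2$ against such a kernel does not automatically produce $\sum_n f(n/k)\lambda_{\varphi_k}(|Q(n)|)$ with a usable normalization; the diagonal pairing against a fixed automorphic observable is governed by triple-product data, and Prasad's theorem is precisely the obstruction the paper emphasizes --- any naive transfer to a split triple product vanishes identically for local reasons, so one must exit the triple-product setting. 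The paper does this by passing to the base change $(\varphi_k)_D$, restricting to $\PGL_2(\mathbb{A})$, and invoking Ichino's twisted triple product formula, where the nonvanishing of the proportionality constant requires the local computations at $p \in \ram(B)$ in Proposition \ref{prop:twisted-watson} together with the choice of $D$ in Proposition \ref{prop:friedberg-hoffstein}. Your constant $c_k$, ``calibrated so that the conjecture gives exactly the decay required,'' is exactly what has to be computed and shown nonzero; asserting the calibration assumes the theorem's hardest point.

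The spectral-completeness step is also not a correct reduction as stated. Weak approximation of a fixed $\phi$ by the family $\{P_{Q,f}\}$ is not enough: the approximation error is paired against the measures $y^k|\varphi_k|^2$, whose total mass is of size $k\,L(\ad \varphi_k,1)$, so you would need the error to be small in a norm controlled uniformly in $k$ by these measures, which forces quantitative control of the full spectral expansion of $P_{Q,f}$, not merely of its projection onto $\phi$. Moreover, the nonvanishing input you invoke --- Waldspurger-type nonvanishing of quadratic periods for \emph{every} cuspidal $\phi$, uniformly --- is far stronger than what is needed or available; the paper fixes one $\Psi$, observes that the problem is vacuous unless $L(\sigma,1/2)\neq 0$ (hence $\eps(\sigma)=1$), and then needs only \emph{one} discriminant $D$ with $L(\sigma\otimes\chi_D,1/2)\neq 0$ and the correct local behavior at $\ram(B)$, supplied by Friedberg--Hoffstein. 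Finally, even granting an unfolding, the sums that actually arise are not $\sum_n f(n/k)\lambda(|Q(n)|)$ on the nose: the arithmetic of the quadratic order introduces congruence conditions and auxiliary factors (in the paper, Lemma \ref{lem:explicate-lambda-D-principal} and the sum over residue classes modulo $2\ell$), and the archimedean weights must be shown to be admissible test functions with controlled Sobolev norms (the analogue of \S\ref{sec:asympt-arch-integr}) before Conjecture \ref{conj:non-split-sums} and Theorem \ref{thm:sieve-bound} can be applied. As it stands, the proposal is a plausible research plan along a genuinely different route, but it does not constitute a proof.
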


\begin{remark}
  It should be possible to formulate an analogue of Conjecture
  \ref{conj:non-split-sums} in dihedral cases by incorporating a
  suitable main term (as in \cite{MR3096570}) and in reducible
  cases by adapting \eqref{eq:luo-sarnak-split-sums}.
\end{remark}

\begin{remark}
  The proof of Theorem
  \hyperlink{thm:impl}{A}
  shows that to deduce
  Conjecture \ref{conj:hque-compact}, it is not necessary to
  know that \eqref{eqn:non-split-sums-conj} holds for
  \emph{every} $Q := Q_k$, but rather for ``sufficiently many'' $Q$
  satisfying the condition
  \begin{equation}\label{eqn:local-splitting-condition-intro}
    \text{$Q$ is irreducible
      at every place at which $B$
      does not split},
  \end{equation}
  and no further splitting conditions.  For instance, when $B$
  is split (over $\mathbb{Q}$), the condition
  \eqref{eqn:local-splitting-condition-intro} is empty and it
  suffices to consider reducible $Q$, as follows from the
  Luo--Sarnak criterion.  When $B$ is non-split, the condition
  \eqref{eqn:local-splitting-condition-intro} forces $Q$ to be
  irreducible.
\end{remark}

\begin{remark}\label{rmk:expected-generalizations}
  We expect that Conjecture \ref{conj:non-split-sums} also
  implies the generalization of Conjecture
  \ref{conj:hque-compact} to higher fixed level (e.g., taking
  for $R$ an Eichler order) and to definite quaternion algebras
  (as in the ``QUE on the sphere'' problem considered in
  \cite{MR2018269}).  Conversely, we expect that mildly
  generalized and strengthened forms of Conjecture
  \ref{conj:hque-compact} and Conjecture
  \ref{conj:non-split-sums} are equivalent, but we do not
  attempt to formulate such an equivalence here.  The analogue
  of Conjecture \ref{conj:non-split-sums} for the ``level
  $q \rightarrow \infty$'' aspect as in
  \cite{PDN-HQUE-LEVEL,MR3110797,MR3801500} should involve
  quadratic polynomials that vary considerably with $q$, e.g.,
  $n \mapsto n^2 - q^2 \ell$.
\end{remark}

\subsection{Applicability of the Holowinsky--Soundararajan
  method}
We address here the tantalizing question of whether Theorem \hyperlink{thm:impl}{A}
and the Holowinsky--Soundararajan method suffice to resolve
Conjecture \ref{conj:hque-compact}.
We will observe a significant discrepancy between the split and non-split cases,
arising
ultimately
from dichotomies
of
the following sort:
\begin{itemize}
\item $n(n+1)$ almost always has at least two prime divisors, but
\item $n^2 + 1$ is expected to be prime infinitely often. 
\end{itemize}

Soundararajan's results
(see \cite{soundararajan-2008},
\cite[Lem 2]{MR2680499}
and \cite[\S1.3]{PDN-HMQUE})
show that
for $\Psi$ a cuspidal Hecke--Maass form,
the conclusion of
Conjecture \ref{conj:hque-compact} holds
provided that the Hecke eigenvalues
$\lambda := \lambda_{\varphi_k}$ satisfy the estimate
\begin{equation}\label{eqn:sound-sufficient-condition}
  \frac{
    \sum _{p \leq k} |\lambda(p)|^2/p
  }{
    \sum _{p \leq k} 1/p
  }
  \geq 1/2 + \delta
  \quad
  \text{for some
    fixed $\delta > 0$}
\end{equation}
for large enough $k$.
In seeking to prove Conjecture \ref{conj:hque-compact} for such $\Psi$,
we may thus assume without loss of generality
(after passing to a subsequence if necessary) that
the condition \eqref{eqn:sound-sufficient-condition}
fails.  By the triangle inequality for the $\ell^2$-norm, we
then have
\begin{equation}\label{eqn:sound-sufficient-condition-2}
  \frac{
    \sum _{p \leq k} (1-|\lambda(p)|)^2/p
  }{
    \sum _{p \leq k} 1/p
  }
  \geq \delta
  \quad 
  \text{ for some fixed $\delta > 0$,}
\end{equation}
or indeed, for any fixed $\delta < (1 - 1/\sqrt{2})^2$.

\begin{remark}
  The condition \eqref{eqn:sound-sufficient-condition} is
  expected to hold, since a sufficiently uniform form of the
  Sato--Tate conjecture would imply that
  $\sum_{p \leq k} |\lambda(p)|^2 / p \sim \sum _{p \leq k } 1
  /p$, but this expectation seems difficult to establish
  unconditionally.
\end{remark}

We now recall how Holowinsky's approach \cite{MR2680498}
establishes the Luo--Sarnak criterion
\eqref{eq:luo-sarnak-split-sums} for $\ell \neq 0$
under the assumption \eqref{eqn:sound-sufficient-condition-2}.  (The case
$\ell = 0$ requires an additional ``$Y$-thickening'' technique
(see \cite[\S3.1]{MR2680498}
or \cite[Lem 5.4]{MR3356036})
which
we do not discuss here.)  Holowinsky bounds the Hecke
eigenvalues in magnitude, forfeiting any potential cancellation
in the sums \eqref{eqn:shifted-sums}, and appeals to
sieve-theoretic bounds.
For simplicity, take $\ell = 1$.
We must verify that
\begin{equation}\label{eqn:holowinsky-target}
  \frac{\sum _{n \leq k} |\lambda(n) \lambda(n+1)|}{k L(\ad \varphi_k,1)}
\end{equation}
tends to zero
as $k \rightarrow \infty$.
On the one hand, it follows from
\cite[Lem 2]{MR2680499} that
$L(\ad \varphi_k,1)$ is bounded
from below (possibly up to a $(\log \log k)^{\O(1)}$
factor, negligible for the present aims) by
$\exp \sum_{p \leq k} (|\lambda(p)|^2 - 1)/p$.
On the other
hand, a sieve bound
due to Nair \cite{MR1197420}
gives the estimate
\begin{equation}\label{eqn:shifted-sums-with-abs}
  \frac{1}{k}
  \sum _{n \leq k}
  |\lambda(n) \lambda(n+1)|
  \ll
  \exp \sum _{p \leq k}
  \frac{2 |\lambda(p)| - 2}{p}.
\end{equation}
Thus \eqref{eqn:holowinsky-target}
is majorized by
\begin{equation}\label{eqn:holowinsky-majorize-negative-stuff}
  \frac{
    \exp \sum _{p \leq k}
    (2 |\lambda(p)| - 2)/p
  }{
  \exp \sum_{p \leq k} (|\lambda(p)|^2 - 1)/p
  }
  =
  \exp
  \left(
    - \sum _{p \leq k}
    \frac{(1 - |\lambda(p)|)^2}{p}
 \right).
\end{equation}
If \eqref{eqn:sound-sufficient-condition-2} holds,
then \eqref{eqn:holowinsky-majorize-negative-stuff}
decays.
Holowinsky--Soundararajan \cite{MR2680499}
established the split case
of Conjecture
\ref{conj:hque-compact} via similar arguments.

To establish the non-split case of Conjecture
\ref{conj:hque-compact} via Theorem \hyperlink{thm:impl}{A} and the
Holowinsky--Soundararajan method would seem to require verifying
that
if
\eqref{eqn:sound-sufficient-condition} fails,
then the non-split shifted convolution sums
such as
\begin{equation}\label{eqn:holowinsky-target-2}
  \frac{\sum _{n \leq k} |\lambda(n^2 + 1)|}{k L(\ad \varphi_k,1)}
\end{equation}
tend to zero as $k \rightarrow \infty$.
The sieve bound \cite{MR1197420}
analogous to \eqref{eqn:shifted-sums-with-abs}
reads
\begin{equation}\label{eqn:shifted-sums-with-abs-2}
  \frac{1}{k}
  \sum _{n \leq k}
  |\lambda(n^2+1)|
  \ll
  \exp \sum _{p \leq k : p \equiv 1(4)}
  \frac{2 |\lambda(p)| - 2}{p},
\end{equation}
so \eqref{eqn:holowinsky-target-2} is majorized by
the ratio
\begin{equation}\label{eqn:}
  \frac{
    \exp \sum _{p \leq k : p \equiv 1(4)}
    (2 |\lambda(p)| - 2)/p
  }{
  \exp \sum_{p \leq k} (|\lambda(p)|^2 - 1)/p
},
\end{equation}
which we may rewrite up to bounded multiplicative error as
\begin{equation}\label{eq:nonsplit-holowinsky-target}
  \exp
  \left(
    - \sum _{p \leq k : p  \equiv 1(4)}
    \frac{(1 - |\lambda(p)|)^2}{p}
    +  \sum_{p \leq k : p \equiv 3(4)}
    \frac{1 - |\lambda(p)|^2}{p}
\right).
\end{equation}
Unfortunately, we see no way to deduce that such expressions
decay.
For instance,
we see no way to rule out unconditionally that
for $p \leq k$,
\begin{equation}\label{eqn:}
  |\lambda(p)| \approx  1
  \text{ for } p \equiv 1(4),
  \quad 
  |\lambda(p)| \approx  0
  \text{ for } p \equiv 3(4),
\end{equation}
in which case
\eqref{eqn:sound-sufficient-condition} fails
and
\eqref{eq:nonsplit-holowinsky-target} does not decay.
Even if the statistical
behavior of $\lambda(p)$ is sufficiently unbiased by the residue
class of $p$ modulo $4$
that
we may approximate
\eqref{eq:nonsplit-holowinsky-target}
by
\begin{equation}\label{eq:nonsplit-holowinsky-target-2}
  \exp
  \left(
    \sum _{p \leq k}
    \frac{- (1 - |\lambda(p)|)^2 + 1 - |\lambda(p)|^2}{ 2p}
  \right)
  =
  \exp
  \left(
    \sum _{p \leq k}
    \frac{ |\lambda(p)| - |\lambda(p)|^2}{ p}
  \right),
\end{equation}
then there remain hypothetical problem cases
such as when
\begin{equation}\label{eqn:}
  |\lambda(p)| \approx 1/\sqrt{2} \text{ for } p \leq k.
\end{equation}
Thus some new idea seems necessary to establish Conjecture
\ref{conj:hque-compact} via Theorem \hyperlink{thm:impl}{A}.

\begin{remark}
  Holowinsky's approach differs significantly from that of
  most literature on the shifted convolution problem
  (split or non-split), see for instance
  \cite[\S4.4]{MR2331346},
  Theorem \ref{thm:templier-tsimerman}
  and \cite{MR2437682, MR2435749, MR2473297, MR3096570}.  The
  cited works seek to achieve power savings estimates for sums
  like \eqref{eqn:shifted-sums} and
  \eqref{eqn:quadratic-sums-hecke-eigenvalues} by exploiting
  cancellation coming from the variation of the sign of the
  Hecke eigenvalues, but with the automorphic form $\varphi_k$
  held essentially fixed (i.e., independent of $k$) as the
  length of the sum increases.  The estimates obtained in this way are not
  sufficiently uniform with respect to $\varphi_k$ to broach the
  Luo--Sarnak criterion \eqref{eq:luo-sarnak-split-sums} or
  Conjecture \ref{conj:non-split-sums}.
\end{remark}

\begin{remark}\label{rmk:sufficiently-uniform-sato-tate}
  A sufficiently uniform
  form of the Sato--Tate conjecture
  would suggest that
  for non-dihedral $\varphi_k$,
  the collection of Hecke eigenvalues
  $\{\lambda(p) :  p \leq k \}$
  behaves like the random variable
  $2 \cos \theta$,
  where $\theta$ is sampled
  from $[0,\pi]$
  with respect to the probability measure
  $\frac{2}{ \pi } \sin^2 \theta \, d \theta$.
  In particular,
  we expect that
  \[
    \sum _{p \leq k}
    \frac{ |\lambda(p)| - |\lambda(p)|^2}{ p}
    \approx
    c
    \log \log k,
    \quad
    c :=
    \frac{2}{ \pi }
    \int _{0}^\pi
    \left(
      |2 \cos \theta|
      - 
      |2 \cos \theta|^2
    \right)
    \, \sin^2 \theta
    \, d \theta
    =
    \frac{8}{3 \pi } - 1
    \approx -0.151.
  \]
  Since $c < 0$, we do in fact expect the non-split sums
  \eqref{eqn:holowinsky-target-2} to decay (like
  $(\log k)^{c} \approx (\log k)^{-0.151}$), but unlike for the
  split sums \eqref{eqn:holowinsky-target},
  there is no apparent way to verify anything
  approaching this expectation.
  Compare with
  \cite{MR2484279}.
\end{remark}

\subsection{Main ideas of the proof}\label{sec:main-ideas-proof}
We now discuss the proof of Theorem \hyperlink{thm:impl}{A}.  We may
assume that $B$ is non-split and that $\Psi$ is a cuspidal
(i.e., non-constant)
Hecke--Maass eigenform.  The basic difficulty, relative to
existing methods, is that the automorphic forms appearing in the
integral on the LHS of \eqref{eqn:hque-conj} do not admit
Fourier expansions.  We aim to relate those integrals to other
integrals of automorphic forms that \emph{do} admit
Fourier
expansions.  This can be achieved using the
theta correspondence as in \cite{MR3356036, nelson-variance-73-2, nelson-variance-II, nelson-variance-3},
but since we are concerned here only with the
\emph{magnitude} of the integrals, it is more direct to work
with $L$-functions and period formulas.  Watson's triple product
formula relates the squared magnitude of the LHS of
\eqref{eqn:hque-conj} to the central triple product $L$-value
\begin{equation}\label{eq:triple-product-finite-part-intro}
  L(\varphi \times \varphi \times \Psi, 1/2),
\end{equation}
so our task is to estimate that $L$-value in terms of integrals
of automorphic forms that admit Fourier expansions and then to
relate such integrals to the Hecke eigenvalues of our original forms.

Naively, one might hope to achieve this aim by simply replacing
$\varphi$ and $\Psi$ by their Jacquet--Langlands lifts to the
$\PGL_2(\mathbb{Q})$, but then Prasad's uniqueness theorem
\cite[Thm 1.2]{MR1059954} implies that the corresponding triple
product integrals vanish identically for local reasons, hence
carry no information about the $L$-value
\eqref{eq:triple-product-finite-part-intro}.
(We discuss this point
at more length in Remark \ref{rmk:summary-thus-far}.)
We must thus look
outside the triple product setting.

We indicate two approaches to the problem.  The first approach
is not developed in detail in this the paper, but seems to us to
convey most efficiently that Conjectures \ref{conj:hque-compact}
and \ref{conj:non-split-sums} should be connected at all;
it is related to our existing work  \cite{MR3356036, nelson-variance-73-2,
  nelson-variance-II, nelson-variance-3}.
The
second approach is the basis of our proof of Theorem
\hyperlink{thm:impl}{A} (and also of Theorem
\hyperlink{thm:twisted-holowinsky-sound}{B}, stated below).
We expect that the two approaches
may be related
to one another 
via a seesaw identity as in \cite[Prop 5.2]{MR2198222} or
\cite[Proof of Thm 1]{MR3291638}.

\begin{enumerate}
\item
  By the factorization
  \begin{equation}\label{eq:intro-factorization-1}
    L(\varphi \times \varphi \times \Psi, 1/2)
    =
    L(\ad \varphi \times \Psi, 1/2) L(\Psi,1/2).
  \end{equation}
  it suffices to estimate the $L$-value
  $L(\ad \varphi \times \Psi, 1/2)$.  That $L$-value appears in
  Shimura-type integral representations on the metaplectic
  double cover of $\SL_2$ (see \cite[Thm 4.5]{MR3291638} and
  \cite{nelson-subconvex-reduction-eisenstein})
  roughly of the shape
  \begin{equation}\label{eq:metaplectic-triple-product}
    L(\ad \varphi \times \Psi, 1/2)
    \approx 
    |\int \overline{\varphi_k'} \theta \tilde{\Psi}|^2,
  \end{equation}
  where \begin{itemize}
  \item  $\varphi_k '(z) = \sum _{m \geq 1}
    \lambda_{\varphi_k}(m)
    m^{(k-1)/2}
    e(m z)$ denotes the Jacquet--Langlands lift
    of $\varphi_k$ to $\PGL_2$,
  \item $\theta$ is an elementary theta function, e.g.,
    $\theta(z) = \sum_{n \in \mathbb{Z}} e(n^2 z)$, and
  \item $\tilde{\Psi}$ is the Maass--Shintani--Waldspurger theta
    lift of $\Psi$.
  \end{itemize}
  Since $\tilde{\Psi}$ is fixed, it suffices to estimate the
  corresponding integrals obtained by replacing $\tilde{\Psi}$
  by a Poincar{\'e} series.  Those integrals unfold naturally in
  terms of the Fourier coefficients of the product
  \begin{equation}\label{eq:}
    \overline{\varphi_k'(z)} \theta(z)
    =
    \sum _{m \geq 1 }
    \lambda_{\varphi_k}(m)
    m^{(k-1)/2}
    e(-m \bar{z})
    \sum _{n \in \mathbb{Z} }
    e(n^2 z).
  \end{equation}
  The $\ell$th Fourier coefficient of that product has the shape
  \begin{equation}\label{eq:}
    \sum_{n} \lambda_{\varphi_k}(n^2-\ell) f(n).
  \end{equation}
  It in fact suffices to consider the restricted class of
  Poincar{\'e} series indexed by non-square $\ell$
  together with the elementary theta functions.
  We thereby encounter sums roughly as in  Conjecture
  \ref{conj:non-split-sums}.
  To implement this strategy
  would require an analysis of the test vector problem
  for the integrals \eqref{eq:metaplectic-triple-product},
  which we do not address here.

  \begin{remark}
    By applying the steps outlined here in reverse,
    it may be possible to establish a ``converse'' to Theorem
    \hyperlink{thm:impl}{A} of the sort suggested in Remark
    \ref{rmk:expected-generalizations}.
\end{remark}

\item Let $D$ be a non-square quadratic fundamental
  discriminant.  We assume, to eliminate some case analysis,
  that $D$ is positive.  Let $(\varphi_k)_D$ denote the weight
  $(k,-k)$ Hilbert modular form for
  $\PGL_2(\mathbb{Q} (\sqrt{D}))$ obtained from $\varphi_k$ by
  quadratic base change (see \S\ref{sec:base-change} for
  details).  By restriction, it defines a modular form
  $\res((\varphi_k)_D)$ for $\PGL_2(\mathbb{Q})$.  Let $\Psi '$
  denote the Jacquet--Langlands lift of $\Psi$ to a newform on
  $\PGL_2(\mathbb{Q})$.  Then the twisted triple product formula
  (\S\ref{sec:twist-triple-prod}) relates the squared
  restriction period
  \[|\int \res((\varphi_k)_D) \Psi '|^2
  \]
  to the twisted Asai $L$-value
  $L(\asai((\varphi_k)_D) \times \Psi,1/2)$, which factors as
  $L(\ad(\varphi_k) \times \Psi,1/2) L(\Psi \otimes
  \chi_D,1/2)$.  Crucially, we may choose $D$ so that the
  proportionality constant in this period formula is nonzero
  and so that $L(\Psi \otimes \chi_D,1/2)$ is nonzero
  (\S\ref{sec:choice-quadr-field}).
  We thereby reduce Conjecture
  \ref{conj:hque-compact} to suitable bounds for restriction
  periods.  By an ``approximate functional equation'' for
  such periods (\S\ref{sec:appr-funct-equat}), we may relate
  them to quadratic sums of Hecke
  eigenvalues (\S\ref{sec:asympt-arch-integr},
  \S\ref{sec:completion-proof}), hence to Conjecture \ref{conj:non-split-sums}.
\end{enumerate}

\subsection{A twisted variant of arithmetic quantum unique
  ergodicity}\label{sec:twist-vari-arithm}
We now describe our second main result, whose relation
to Theorem \hyperlink{thm:impl}{A} will be clarified below.

Fix a cuspidal Hecke--Maass eigenform $\Psi$ on
$\SL_2(\mathbb{Z}) \backslash \mathbb{H}$ and a real quadratic
field $\mathbb{Q}(\sqrt{D})$ of discriminant $D$.  For even
$k \geq 12$, let $\varphi_k$ be a (non-dihedral)
cuspidal holomorphic Hecke eigenform on
$\SL_2(\mathbb{Z})$ of weight $k$.  Let $(\varphi_k)_D$,
as in the proof sketch above,
denote
the quadratic base change lift of $\varphi_k$ to a weight
$(k,-k)$ cuspidal Hilbert modular newform on
$\PGL_2(\mathbb{Q}(\sqrt{D}))$,
(\S\ref{sec:base-change}).  We define the $L^2$-norm $\|(\varphi_k)_D\|$ by
integrating over
$\PGL_2(\mathbb{Q}(\sqrt{D})) \backslash
\PGL_2(\mathbb{A}_{\mathbb{Q}(\sqrt{D})})$ with respect to some
Haar measure.  (The measure normalization is not important for our purposes.)
By restriction, we obtain a function $\res((\varphi_k)_D)$ on
$\SL_2(\mathbb{Z}) \backslash \mathbb{H}$.  We define the
restriction period $\int \res((\varphi_k)_D) \Psi$ by
integrating over $\SL_2(\mathbb{Z}) \backslash \mathbb{H}$ with
respect to the standard measure $\frac{d x \, d y}{y^2}$.

\begin{theoremB}
  \hypertarget{thm:twisted-holowinsky-sound}{}
  For fixed $\eps> 0$, we have
  \begin{equation}\label{eqn:twisted-holowinsky-sound}
    \frac{\int  \res((\varphi_k)_D) \Psi}{\|(\varphi_k)_D\|}
    \ll_{\eps}
    (\log k)^{-1/8+\eps}.
  \end{equation}
\end{theoremB}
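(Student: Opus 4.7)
The plan is to mirror the structure of the classical Holowinsky--Soundararajan proof of arithmetic QUE, combining a twisted Holowinsky-type bound on the period with Soundararajan's weak subconvex bound on the associated L-value, and then balancing the two.

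First, I would apply the twisted triple product (Asai) formula developed in Section \ref{sec:twist-triple-prod} to relate the squared restriction period to the central L-value:
\[
\frac{\bigl|\int \res((\varphi_k)_D) \Psi\bigr|^2}{\|(\varphi_k)_D\|^2} \asymp \frac{L(\ad \varphi_k \times \Psi, 1/2) \cdot L(\Psi \otimes \chi_D, 1/2)}{k \cdot L(\ad \varphi_k, 1)}.
\]
Since $\Psi$ and $D$ are fixed, $L(\Psi \otimes \chi_D, 1/2) = O(1)$, so the problem reduces to proving
$L(\ad \varphi_k \times \Psi, 1/2) \ll_\eps k \cdot L(\ad \varphi_k, 1) \cdot (\log k)^{-1/4+\eps}$, i.e., logarithmic subconvexity for the degree $6$ Rankin--Selberg L-value.

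Second, I would establish a twisted Holowinsky bound (Proposition \ref{prop:twisted-holowinsky}). The restriction $\res((\varphi_k)_D)$ has a Fourier expansion at the cusp $\infty$ whose $n$-th coefficient takes the form $\sum_m \lambda_{\varphi_k}(|Q_n(m)|) w_k(m)$, with $Q_n(m) = (n^2 - D m^2)/4$ an integer-valued quadratic polynomial and $w_k$ a smooth archimedean weight essentially supported near $m \asymp k / \sqrt{D}$. Unfolding the period against the Hecke expansion of $\Psi$, passing to absolute values, and applying the sieve bound of Theorem \ref{thm:sieve-bound} to each such quadratic sum (then summing in $n$ against the Hecke eigenvalues of $\Psi$) yields a bound of the form
\[
\frac{\bigl|\int \res((\varphi_k)_D) \Psi\bigr|}{\|(\varphi_k)_D\|} \ll (\log k)^{O(1)} \exp\!\left(-\!\!\sum_{\substack{p \leq k \\ \chi_D(p) = 1}}\!\!\frac{(1 - |\lambda_{\varphi_k}(p)|)^2}{p} \,+\, \sum_{\substack{p \leq k \\ \chi_D(p) = -1}}\!\!\frac{1 - |\lambda_{\varphi_k}(p)|^2}{p}\right),
\]
after normalizing via the Hoffstein--Lockhart lower bound on $L(\ad \varphi_k, 1)$.

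Third, I would invoke Soundararajan's weak subconvex bound \cite{soundararajan-2008} applied to $L(\ad \varphi_k \times \Psi, s)$, which produces a complementary estimate whose decay is effective precisely in the regime where the twisted Holowinsky bound fails---namely, when $|\lambda_{\varphi_k}(p)|$ is close to $1$ at split primes but the inert-prime sum contributes positively (for instance, when $|\lambda_{\varphi_k}(p)| \approx 1/\sqrt{2}$ at inert primes, as in the discussion of Section \ref{sec:main-ideas-proof}). Taking the minimum of the two bounds and optimizing over all Deligne-admissible sequences $|\lambda_{\varphi_k}(p)| \in [0,2]$ should yield the exponent $1/8 - \eps$. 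The main obstacle is Proposition \ref{prop:twisted-holowinsky}: adapting Nair's sieve method to quadratic polynomials with polynomial uniformity in $\|Q\|$, while cleanly summing over the Fourier index $n$. This uniform sieve bound, which is the content of Theorem \ref{thm:sieve-bound}, requires careful treatment of local densities at primes dividing the discriminant $D$ and at primes where $\chi_D$ is nontrivial.
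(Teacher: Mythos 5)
Your overall strategy (twisted Ichino/Watson formula, Soundararajan's weak subconvexity, a twisted Holowinsky sieve bound, then balancing) is indeed the paper's strategy, but there is a genuine gap in how you normalize, and it is precisely the point on which the theorem turns. You divide throughout by $k\,L(\ad\varphi_k,1)$ (invoking Hoffstein--Lockhart), whereas the quantity in the theorem is normalized by $\|(\varphi_k)_D\| \asymp L(\ad(\pi_D),1)^{1/2} = \bigl(L(\ad\varphi_k,1)\,L(\ad\varphi_k\otimes\chi_D,1)\bigr)^{1/2}$; your step 1 already drops the factor $L(\ad\varphi_k\otimes\chi_D,1)$ from the denominator of the period formula. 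The key lemma you are missing is the twisted analogue of \cite[Lem 2]{MR2680499}, namely \eqref{eqn:bound-for-L-ad-D}: because of the factorization $L(\ad(\pi_D),s)=L(\ad\pi,s)L(\ad\pi\otimes\chi_D,s)$ and the identity $(1+\chi_D(p))/2=1_{p\ \mathrm{split}}$, the lower bound for $\|\varphi_D\|$ involves \emph{only split primes}, i.e. $1/\|\varphi_D\| \ll (\log\log k)^{\O(1)}\exp\sum_{\text{split }p\leq k}(1-|\lambda(p)|^2)/p$. Combined with Proposition \ref{prop:twisted-holowinsky} this yields \eqref{eqn:triple-product-upper-bound-2}, in which the inert primes have disappeared entirely. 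Your claimed bound instead carries the extra term $+\sum_{\text{inert }p}(1-|\lambda(p)|^2)/p$, which is exactly the expression \eqref{eq:nonsplit-holowinsky-target} that \S\ref{sec:main-ideas-proof} explains cannot be shown to decay unconditionally. Concretely, your two bounds cannot be balanced: if, say, $|\lambda(p)|\approx 0$ for all $p\leq k$, your Holowinsky-type bound grows like $(\log k)^{1/4}$ and the Soundararajan-type bound gives no decay, so no choice of minimum yields $(\log k)^{-1/8+\eps}$ (whereas the paper's corollary gives $(\log k)^{-1/2+\eps}$ in that configuration). With the correct normalization, both bounds live over split primes only, where $\exp\sum_{\text{split }p\leq k}1/p\asymp(\log k)^{1/2}$, and the elementary optimization $\min_c\max(c^2,(1-c)^2)=1/4$ produces the exponent $1/8$; your proposal never carries out this computation, and with your stated bounds it would fail.

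Two further points. First, Theorem \ref{thm:sieve-bound} gives only the eigenvalue-independent bound $\|Q\|^{\O(1)}(\log k)^{1/2+\eps}$ and is useless for the balancing; what is needed is the sharper Proposition \ref{prop:twisted-holowinsky}, which retains the dependence $\exp\sum_{\text{split }p}2(|\lambda(p)|-1)/p$ (you cite both, but the logic as written leans on the former). Second, the period $\int\res(\varphi_D)\Psi$ does not simply ``unfold'' against Fourier expansions, since neither factor is a Poincar\'e series and a $Y$-thickening would require control of the $L^1$-norm of $\res(\varphi_D)$ that is not available; the paper supplies the missing mechanism via Proposition \ref{prop:AFE} and the resulting asymptotic formula of Proposition \ref{prop:asympt-arch-integr} (together with Lemma \ref{lem:explicate-lambda-D-principal} to convert $\lambda_D$ of principal ideals into $\lambda$ of quadratic values, summed over residue classes modulo $2\ell$). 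These latter issues are repairable; the normalization issue is the substantive gap.
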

Note that if we apply the same construction but with $D=1$ and
$\mathbb{Q}(\sqrt{D})$ replaced by
$\mathbb{Q} \times \mathbb{Q}$, then
$\res((\varphi_k)_D)(z) = y^k |\varphi_k|^2(z)$ and the left
hand sides of \eqref{eqn:hque-conj} and
\eqref{eqn:twisted-holowinsky-sound} coincide for suitable
measure normalizations.  Thus the estimate
\eqref{eqn:twisted-holowinsky-sound} may be understood as a
twisted variant of the result \cite[Thm 1 (i)]{MR2680499} of
Holowinsky--Soundararajan.

The proof of Theorem
\hyperlink{thm:twisted-holowinsky-sound}{B}
is
essentially identical to that of Theorem \hyperlink{thm:impl}{A} except
that in the final steps, we are left with sums normalized not
like \eqref{eqn:holowinsky-target-2} but instead like
\begin{equation}\label{eqn:holowinsky-target-3}
  \frac{\sum _{n \leq k} |\lambda(n^2 + 1)|}{k
    \sqrt{
    L(\ad \varphi_k,1)
    L(\ad \varphi_k \otimes \chi_{-4},1)
  }
  }
\end{equation}
(or more precisely their real quadratic analogues involving
$\chi_D$ and polynomials such as $n^2 - D$).  We verify that the
Holowinsky--Soundararajan method successfully applies to such
sums.

\subsection{Plan for this paper}
\S\ref{sec:notat-prel-reduct}--\S\ref{sec:completion-proof} are
devoted to the proof of Theorem \hyperlink{thm:impl}{A}, following the
sketch indicated in \S\ref{sec:main-ideas-proof}.
\S\ref{sec:decay-norm-base} gives the proof of Theorem
\hyperlink{thm:twisted-holowinsky-sound}{B}, borrowing many results from
the previous sections.

\section{Notation and preliminary reductions}\label{sec:notat-prel-reduct}
We adopt the setting of Conjecture
\ref{conj:hque-compact}.  To simplify notation, we drop the
subscripts $k$, thus $\varphi := \varphi_k$.

By the Holowinsky--Soundararajan theorem, we may and shall
assume that $B$ is non-split.  The span of the constant
functions and the Hecke--Maass cusp forms is then dense in the
space of continuous functions on $\mathbf{Y}$ equipped with the
supremum norm, so it suffices to consider the case that $\Psi$
is a Hecke--Maass cusp form.

For any prime $p$, the Hecke operator $T_p$ is defined as
follows.
For a function $f : \mathbf{Y} \rightarrow \mathbb{C}$,
we set
$T_p f(z) := \sum _{\gamma \in \Gamma \backslash R^{(p)}}
f(\gamma z)$, where
$R^{(p)} := \{\gamma \in R : \det(\gamma) = p\}$.

Let $\ram(B)$ denote the set of finite primes at which the
quaternion algebra $B$ does not split.  Since $B$ splits at
$\infty$, we know that $\ram(B)$ is a finite set of even
cardinality.  For $p \in \ram(B)$, the corresponding Hecke
operator $T_p$ on $\mathbf{Y}$ is an involution.  Each such
involution acts on the eigenform $\varphi$ by some sign $\pm 1$, hence leaves
the measure $y^k |\varphi(z)|^2$ invariant.  The operator
$T_p$ on $L^2(\mathbf{Y})$ is self-adjoint and acts on $\Psi$ by
some sign, so the LHS of \eqref{eqn:hque-conj}
vanishes identically unless
\begin{equation}\label{eqn:Psi-tot-even}
  \text{$T_p \Psi = \Psi$ for all $p \in \ram(B)$,}
\end{equation}
as we henceforth assume.

The eigenform $\varphi$ (resp. $\Psi$) generates a cuspidal automorphic
representation $\pi^B$ (resp. $\sigma^B$) of $\PB^\times(\mathbb{A})$.  By the
Jacquet--Langlands lift, we obtain a cuspidal automorphic
representation $\pi$ (resp. $\sigma$) of $\PGL_2(\mathbb{A})$.

Let $p \in \ram(B)$.
The evenness condition
\eqref{eqn:Psi-tot-even} implies that the local component
$\sigma_p^B$ is the trivial representation, hence that
$\sigma_p$ is the Steinberg representation of
$\PGL_2(\mathbb{Q}_p)$ (see \cite[\S56.2]{MR2234120}).  The
local component $\pi_p$ is either the Steinberg representation
or its twist by the nontrivial unramified quadratic character
of $\mathbb{Q}_p^\times$.

For a finite prime $p \notin \ram(B)$, the local components
$\sigma_p = \sigma_p^B$ and $\pi_p = \pi_p^B$ are unramified
principal series representations of $\PGL_2(\mathbb{Q}_p)
\cong B_p/\mathbb{Q}_p^\times$.

We record a special case of Watson's formula \cite[Thm 3]{watson-2008}.
\begin{proposition}\label{prop:watson}
  The squared magnitude of the LHS of \eqref{eqn:hque-conj}
  is equal to
  \begin{equation}\label{eqn:watson}
    c
    \frac{
      \Lambda(\pi \times \pi \times \sigma,1/2)
    }{
      \Lambda(\ad \pi, 1)^2
    },
  \end{equation}
  where $c \geq 0$ depends only upon $\Psi$.
\end{proposition}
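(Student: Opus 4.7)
The plan is to invoke Watson's formula for the adelic triple product and track which quantities depend on the varying form $\varphi$ and which depend only on $\Psi$, bundling the latter into the constant $c$. First, I would pass to an adelic setting: viewing $\varphi$ and $\Psi$ as vectors in the automorphic representations $\pi^B$ and $\sigma^B$ of $\PB^\times(\mathbb{A})$, the numerator on the LHS of \eqref{eqn:hque-conj} matches, up to fixed measure-normalization constants, the triple product
\[
  I := \int_{\PB^\times(\mathbb{Q}) \backslash \PB^\times(\mathbb{A})} \varphi \, \bar\varphi \, \Psi \, dg,
\]
while the denominator matches the $L^2$-norm $\|\varphi\|^2$ on the same quotient. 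Taking squared moduli, the LHS of \eqref{eqn:hque-conj} becomes $|I|^2/\|\varphi\|^4$ up to an absolute constant.

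Second, I would invoke Watson's formula \cite[Thm 3]{watson-2008}, a specialization of Ichino's theorem for quaternionic triple products. It yields a factorization
\[
  \frac{|I|^2}{\|\varphi\|^2 \, \|\bar\varphi\|^2 \, \|\Psi\|^2}
  = \Bigl(\prod_v C_v\Bigr)
  \cdot \frac{\Lambda(\pi \times \pi \times \sigma,1/2)}
  {\Lambda(\ad\pi,1)^2 \, \Lambda(\ad\sigma,1)},
\]
where each $C_v$ is a normalized local matrix-coefficient integral, and where $\pi, \sigma$ are the Jacquet--Langlands transfers; the finite $L$-factors of $\pi^B, \sigma^B$ match those of $\pi, \sigma$ by Jacquet--Langlands.

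Third, I would verify that every $C_v$ depends only on $\sigma$ and on absolute local data. At unramified finite places the standard spherical vector calculation yields $C_v = 1$. At the archimedean place, the weight-$k$ holomorphic test vector paired against the fixed $\Psi_\infty$ gives a factor whose $k$-dependence is precisely what is needed to make the claim $k$-independent after absorbing the archimedean component of $\Lambda(\ad\pi,1)$, leaving a residual $\Psi_\infty$-only constant. At a finite place $p \in \ram(B)$, the evenness hypothesis \eqref{eqn:Psi-tot-even} forces $\sigma_p^B$ to be trivial, and for each of the two possibilities for $\pi_p$ (Steinberg or its unramified quadratic twist) the local integral $C_p$ is an explicit absolute constant depending only on $\sigma_p$.

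Finally, I would collect all $\varphi$-independent factors --- namely $\|\Psi\|^2$, the measure-normalization constant, the various $C_v$, and $\Lambda(\ad\sigma,1)^{-1}$ --- into the single nonnegative constant $c$, leaving the quotient $\Lambda(\pi \times \pi \times \sigma,1/2)/\Lambda(\ad\pi,1)^2$. Nonnegativity is automatic since each factor is either a squared norm or a positive local integral. The main obstacle will be the bookkeeping at $p \in \ram(B)$: I must confirm that any dependence of $C_p$ on which of the two possible local components $\pi_p$ occurs can be absorbed consistently into the $L$-factor $\Lambda(\ad\pi,1)$, so that no residual $\varphi$-dependent multiplier remains outside the stated $L$-ratio.
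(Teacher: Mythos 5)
Your proposal matches the paper's treatment: the paper offers no independent proof of this proposition, simply recording it as a special case of Watson's formula \cite[Thm 3]{watson-2008}, which is exactly the reduction you outline (adelize the period, apply Watson/Ichino, absorb the $\varphi$-independent local and normalization factors into $c$). The one point you flag at $p \in \ram(B)$ does work out, since $\pi_p^B$ is one-dimensional and the possible quadratic twist cancels in the pairing of $\pi_p^B$ against $\overline{\pi_p^B}$ (and likewise leaves $\Lambda(\ad\pi,1)$ and $\Lambda(\pi\times\pi\times\sigma,1/2)$ unchanged), so the local factor is identical in both cases and $c$ indeed depends only on $\Psi$ and the fixed data.
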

Here and henceforth
$\Lambda(\dotsb,s) = L_\infty(\dotsb,s) L(\dotsb,s)$ denotes a
completed $L$-function, including the archimedean local factor
$L_\infty(\dotsb,s)$, while $L(\dotsb,s)$ denotes the finite
part of an $L$-function, given for $\Re(s)$ large enough by a
convergent Euler product $\prod_p L_p(\dotsb,s)$ with $p$
running over the finite primes of $\mathbb{Q}$.  We note that,
e.g., $L(\pi \times \pi \times \sigma, 1/2)$ was denoted
$L(\varphi \times \varphi \times \Psi, 1/2)$ in
\S\ref{sec:main-ideas-proof}.  We denote by (e.g.)
$\eps(\sigma)$ the global $\eps$-factor,
evaluated at the central point $s=1/2$; it factors as the
product of $\eps_p(\sigma) = \eps(\sigma_p)$ over all places $p$ over
$\mathbb{Q}$, finite or infinite.

By the result of Sarnak \cite{Sar01} noted in
\S\ref{sec:introduction}, we may and shall assume that $\pi$ is
non-dihedral.
This assumption is actually a
consequence of our earlier assumption that $\varphi$ is full
level, which forces each local component of $\pi$ at a finite
place to be spherical or an unramified twist of Steinberg.
We prefer to invoke the full level assumption
only when it genuinely simplifies our discussion.

By the factorization
\begin{equation}\label{eqn:factor-triple-product}
  \Lambda(\pi \times \pi \times \sigma,s)
  =  
  \Lambda(\ad \pi \times \sigma,s)
  \Lambda(\sigma,s),
\end{equation}
we see that the RHS of \eqref{eqn:watson}
vanishes unless
$L(\sigma,1/2) \neq 0$,
in which case
\begin{equation}\label{eqn:L-Psi-nonzero}
  \eps(\sigma) = 1.
\end{equation}
For the proof of Theorem \hyperlink{thm:impl}{A}, there is thus
no loss in assuming \eqref{eqn:L-Psi-nonzero}.  However, because
some of the discussion to follow will be used also in the proof
of Theorem \hyperlink{thm:twisted-holowinsky-sound}{B}, we do not impose
\eqref{eqn:L-Psi-nonzero} as a blanket assumption.

By a \emph{nontrivial fundamental discriminant} we mean the discriminant of
a quadratic field extension of $\mathbb{Q}$.
Recall that by class field theory, the following are in natural
bijection:
\begin{itemize}
\item Nontrivial fundamental discriminants $D$.
\item Nontrivial quadratic
characters $\chi_D$ of $\mathbb{A}^\times/\mathbb{Q}^\times$.
\item Quadratic field
extensions $\mathbb{Q} (\sqrt{D})$ of $\mathbb{Q}$.
\end{itemize}
For each nontrivial fundamental discriminant $D$, we write
\begin{itemize}
\item  $\mathcal{O}_D$ for the ring of integers in
  $\mathbb{Q}(\sqrt{D})$,
\item $\mathbb{N}_D$ for the monoid of integral ideals
  in
  $\mathcal{O}_D$,
\item  $\mathfrak{d} = (\sqrt{D})$ for the different ideal, and
\item $\mathcal{N}(\mathfrak{a})$ for the absolute
  norm of a fractional ideal $\mathfrak{a}$ of $\mathcal{O}_D$.
\end{itemize}
We recall that $\mathcal{O}_D$ consists of all elements
$(n + \ell \sqrt{D})/2 \in \mathbb{Q}(\sqrt{D})$ for which
$\ell \in \mathbb{Z}$ and
\begin{equation}\label{eqn:conditions-for-O-D}
  \begin{cases}
    n \in 2 \mathbb{Z}  & \text{ if $D$ is even}, \\
    n \in \ell + 2 \mathbb{Z} & \text{ if $D$ is odd.}
  \end{cases}
\end{equation}
When $D$ is clear from context,
we say that a rational prime $p$ is \emph{split}, \emph{inert}
or \emph{ramified} according to its behavior with respect to
$\mathcal{O}_D$.  We say more generally that a natural number
$n \in \mathbb{N}$ is \emph{split} or \emph{inert} or
\emph{ramified} if it is a product of primes with the indicated
property.  When $D$ is positive, we fix an ordering on the
archimedean places (i.e., real embeddings) $\infty_1, \infty_2$
of $\mathbb{Q}(\sqrt{D})$, with $\infty_1$ the standard
embedding with respect to which $\sqrt{D}$ is positive.

\section{Choice of quadratic character}\label{sec:choice-quadr-field}
In this section we construct a family of quadratic characters
$\chi_D$ relevant for the proof of Theorem \hyperlink{thm:impl}{A}.
\begin{proposition}\label{prop:friedberg-hoffstein}
  Assume \eqref{eqn:L-Psi-nonzero}.
  Then
  there are infinitely many
  nontrivial quadratic characters $\chi_D$ of
  $\mathbb{A}^\times/\mathbb{Q}^\times$
  with the following
  properties:
  \begin{enumerate}[(i)]
  \item $L(\sigma \otimes \chi_D,1/2) \neq 0$.
  \item The archimedean local component $(\chi_D)_{\infty}$ is trivial.
  \item For each $p \in \ram(B)$,
    the local component $(\chi_D)_p$ is the nontrivial unramified quadratic
    character of $\mathbb{Q}_p^\times$.
  \end{enumerate}
\end{proposition}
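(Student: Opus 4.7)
The plan is to invoke the Friedberg--Hoffstein nonvanishing theorem for quadratic twists of $\GL_2$ $L$-functions at the central point, together with its standard refinement permitting prescribed local behavior of the twisting character at a finite set of places. In the form I need: given a cuspidal automorphic representation $\sigma$ of $\PGL_2(\mathbb{A})$ with $\eps(\sigma) = +1$, a finite set $S$ of places of $\mathbb{Q}$, and local quadratic characters $\eta_v$ at each $v \in S$ such that the global root number of any quadratic character $\chi$ matching $\eta_v$ at $v \in S$ (and varying appropriately outside $S$) is $+1$, there exist infinitely many such $\chi$ with $L(\sigma \otimes \chi, 1/2) \neq 0$.

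To apply this, I would take $S = \{\infty\} \cup \ram(B)$, prescribing $\eta_\infty$ trivial and $\eta_p$ the nontrivial unramified quadratic character of $\mathbb{Q}_p^\times$ for each $p \in \ram(B)$.  These local conditions are realized by any fundamental discriminant $D > 0$ that is coprime to $\prod_{p \in \ram(B)} p$ and satisfies appropriate Legendre-type conditions at each $p \in \ram(B)$ (nonresidue modulo odd $p$, with an analogous condition at $p=2$ if $2 \in \ram(B)$); infinitely many such $D$ exist by elementary density considerations.

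The key compatibility check is that the prescribed data yield $\eps(\sigma \otimes \chi_D) = +1$.  Since $(\chi_D)_\infty$ is trivial, the archimedean factor is unchanged.  At each $p \in \ram(B)$, the local component $\sigma_p$ is Steinberg with conductor exponent $1$, and the standard behavior of local epsilon factors under unramified twists gives
\[
  \eps_p(\sigma_p \otimes (\chi_D)_p)
  = (\chi_D)_p(\varpi_p) \cdot \eps_p(\sigma_p)
  = -\eps_p(\sigma_p).
\]
Combined with $\eps(\sigma) = +1$ from \eqref{eqn:L-Psi-nonzero} and the observation, recalled in \S\ref{sec:notat-prel-reduct}, that $|\ram(B)|$ is even, this gives
\[
  \eps(\sigma \otimes \chi_D) = (-1)^{|\ram(B)|} \cdot \eps(\sigma) = +1.
\]
The substantive content of the proposition lies entirely in the cited nonvanishing theorem; the local epsilon factor computation is routine but depends crucially on the evenness of $|\ram(B)|$, a feature built into the problem through the hypothesis that $B$ splits at $\infty$.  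This evenness is what allows the sign changes at the ramified primes to cancel, and is the one place where the structural hypotheses on $B$ enter the argument.
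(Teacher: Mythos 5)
Your overall strategy is the same as the paper's: invoke Friedberg--Hoffstein \cite[Thm B]{MR1343325} with local conditions prescribed at $S = \{\infty\} \cup \ram(B)$, and reduce to checking that every admissible $\chi_D$ has $\eps(\sigma \otimes \chi_D) = +1$. However, your root-number verification has a genuine gap: you account for the local sign changes only at $\infty$ and at $p \in \ram(B)$, and then conclude $\eps(\sigma \otimes \chi_D) = (-1)^{|\ram(B)|}\eps(\sigma)$ as if the local $\eps$-factors at all other places were unchanged. They are not. At a finite prime $p \notin \ram(B)$ dividing $D$, the component $(\chi_D)_p$ is \emph{ramified} while $\sigma_p$ is unramified, and by \cite[Prop 3.1.1]{Sch02} the local sign changes by the factor $(\chi_D)_p(-1)$, which can equal $-1$ (e.g.\ at primes $p \equiv 3 \pmod 4$ dividing $D$). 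So your displayed identity for the global root number is not a consequence of the two local computations you performed, and taken literally your argument would also "prove" the same conclusion for negative $D$, where it is false in general.

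The missing step, which is the actual content of the paper's computation, is to show that the product of these extra factors over all $p \notin S$ equals $1$. This follows from the product formula: $\prod_v (\chi_D)_v(-1) = \chi_D(-1) = 1$ since $\chi_D$ is trivial on $\mathbb{Q}^\times$, while $(\chi_D)_\infty(-1) = 1$ (triviality at $\infty$, i.e.\ $D > 0$) and $(\chi_D)_p(-1) = 1$ for $p \in \ram(B)$ (an unramified character evaluated at a unit), so that $\prod_{p \notin S}(\chi_D)_p(-1) = 1$. Note that this cancellation uses precisely the prescribed local conditions (ii) and (iii); it is not a formality independent of them. With this inserted, your argument coincides with the proof in \S\ref{sec:choice-quadr-field}; your explicit realization of the local conditions by congruence/sign conditions on $D$ is fine and parallels what the paper leaves implicit.
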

\begin{proof}
  A result of Friedberg--Hoffstein \cite[Thm B]{MR1343325}
  reduces our task to verifying the existence of at least one
  quadratic character $\chi_D$ satisfying conditions (ii), (iii)
  and for which
  \begin{equation}\label{eqn:eps-sigma-twisted-1}
    \eps(\sigma \otimes \chi_D) = 1.
  \end{equation}
  But our assumption \eqref{eqn:L-Psi-nonzero}
  implies that \eqref{eqn:eps-sigma-twisted-1}
  holds for all such $\chi_D$.
  Indeed, 
  we may factor the sign
  $\eps(\sigma)$
  as a product of local signs
  $\eps(\sigma_p)$,
  with $p$ running over the places of $\mathbb{Q}$
  (finite or infinite),
  and similarly for
  $\eps(\sigma \otimes \chi_D)$.
  We compute for each such $p$ the ratio
  \begin{equation}\label{eqn:}
    \eps(\sigma_p \otimes (\chi_D)_p)/\eps(\sigma_p).
  \end{equation}
  \begin{itemize}
  \item For $p = \infty$,
    the component $(\chi_D)_p$ is trivial,
    so the ratio is $1$.
  \item For $p \in \ram(B)$, the component $(\chi_D)_p$ is the
    nontrivial unramified quadratic character, while $\sigma_p$
    is the Steinberg representation.  By \cite[Prop 3.1.2, Thm
    3.2.2]{Sch02}, the ratio is $-1$.
  \item For finite $p \notin \ram(B)$,
    the component $\sigma_p$ is unramified.
    By \cite[Prop 3.1.1]{Sch02},
    the ratio is $(\chi_D)_p(-1)$.
  \end{itemize}
  Thus
  \begin{equation}\label{eqn:ratio-global-eps}
    \eps(\sigma \otimes \chi_D) / \eps(\sigma)
    =
    (-1)^{\# \ram(B)}
    \prod _{p \notin \{\infty \} \cup \ram(B) }
    (\chi_D)_p(-1).
  \end{equation}
  For $p \in \{\infty \} \cup \ram(B)$, we have
  $(\chi_D)_p(-1) = 1$, so the product over $p$ in
  \eqref{eqn:ratio-global-eps} is $\chi_D(-1) = 1$.  Since
  $\ram(B)$ has even cardinality,
  we have $(-1)^{\# \ram(B)} = 1$.  Thus
  $\eps(\sigma \otimes \chi_D) = \eps(\sigma)$, and so
  \eqref{eqn:eps-sigma-twisted-1} follows from
  \eqref{eqn:L-Psi-nonzero}.
\end{proof}

For $\chi_D$ as in Proposition \ref{prop:friedberg-hoffstein},
our assumptions on the local components imply that
\begin{itemize}
\item $\mathbb{Q}(\sqrt{D})$ is real quadratic, i.e., $D>0$, and
\item each $p \in \ram(B)$ is inert in $\mathcal{O}_D$.
\end{itemize}
We remark that the assumption that $D$ be positive is
unimportant, but helps streamline our discussion; we could just
as well work with negative $D$.

\section{Base change}\label{sec:base-change}
Let $D$ be a nontrivial fundamental discriminant.
Recall that $\pi$ is (assumed) non-dihedral.
By quadratic
base change (see \cite[\S5.3]{MR546613}), we obtain from $\pi$ a
cuspidal automorphic representation $\pi_D$ of
$\PGL_2(\mathbb{A}_{\mathbb{Q}(\sqrt{D})})$.

As in \S\ref{sec:introduction}, let
$\lambda := \lambda_{\varphi} : \mathbb{N} \rightarrow \mathbb{C}$
denote the
multiplicative function
describing the normalized Hecke eigenvalues
of $\pi^B$,
so that
\begin{equation}\label{eqn:}
  L(\pi,s)
  = \sum _{n \in \mathbb{N} }
  \frac{\lambda(n)}{n^s}.
\end{equation}
The normalized Hecke eigenvalues of $\pi_D$ are then
described by the
$\Gal(\mathbb{Q}(\sqrt{D})/\mathbb{Q})$-invariant
multiplicative function
$\lambda_D : \mathbb{N}_D \rightarrow \mathbb{C}$ characterized
by the relation
\begin{equation}\label{eqn:L-pi-D-s-defn}
  L(\pi_D,s)
  :=
  \sum _{\mathfrak{a} \in \mathbb{N}_D}
  \frac{\lambda_D(\mathfrak{a})}{\mathcal{N} (\mathfrak{a})^s}
  =
  L(\pi,s)
  L(\pi \otimes \chi_D,s).
\end{equation}
This relation yields explicit formulas for $\lambda_D$.
\begin{lemma}
For a prime $\mathfrak{p}$ of
$\mathcal{O}_D$ lying over a rational prime $p$, we have
\begin{equation}\label{eqn:lambda-D-prime-powers}
  \lambda_D(\mathfrak{p}^n)
  = \lambda(\mathcal{N}(\mathfrak{p}^n))
\end{equation}
except when $p$ is inert and $p \notin \ram(B)$, in which case
$\lambda_D(\mathfrak{p}^n) = \alpha^{2 n} + \alpha^{2 n - 2} +
\dotsb + \alpha^{-2n}$ for any $\alpha \in \mathbb{C}^\times$
with $\alpha + \alpha^{-1} = \lambda(p)$.
\end{lemma}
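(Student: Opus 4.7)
The plan is to prove this lemma by a direct Euler-factor comparison on the two sides of the identity
\[
  L(\pi_D,s) = L(\pi,s) L(\pi \otimes \chi_D,s)
\]
from \eqref{eqn:L-pi-D-s-defn}. Fixing a rational prime $p$, I would group the primes $\mathfrak{p}$ of $\mathcal{O}_D$ above $p$ and match
$\prod_{\mathfrak{p} \mid p} L_{\mathfrak{p}}(\pi_D, s)$
against $L_p(\pi,s) L_p(\pi \otimes \chi_D,s)$, then read off $\lambda_D(\mathfrak{p}^n)$ from the expansion of $L_{\mathfrak{p}}(\pi_D, s)$ in powers of $\mathcal{N}(\mathfrak{p})^{-s}$. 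Everything reduces to knowing $\chi_{D,p}$: it is trivial on $\mathbb{Z}_p^\times$ and takes value $1$, $-1$, or $0$ on $p$ according to whether $p$ is split, inert, or ramified in $\mathcal{O}_D$.

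For the split case, $\chi_D(p) = 1$ gives $L_p(\pi \otimes \chi_D,s) = L_p(\pi,s)$, and the Galois symmetry $\lambda_D(\mathfrak{p}_1^n) = \lambda_D(\mathfrak{p}_2^n)$ forces $L_{\mathfrak{p}_i}(\pi_D,s) = L_p(\pi,s)$, hence $\lambda_D(\mathfrak{p}^n) = \lambda(p^n) = \lambda(\mathcal{N}(\mathfrak{p})^n)$. For the ramified case, note that Proposition~\ref{prop:friedberg-hoffstein}(iii) together with the unramifiedness of $\chi_D$ at the places of $\ram(B)$ forces $p \notin \ram(B)$, so $\pi_p$ is unramified, but $\pi_p \otimes \chi_{D,p}$ is ramified; thus $L_p(\pi \otimes \chi_D,s) = 1$, leaving $L_{\mathfrak{p}}(\pi_D,s) = L_p(\pi,s)$ and again $\lambda_D(\mathfrak{p}^n) = \lambda(p^n) = \lambda(\mathcal{N}(\mathfrak{p})^n)$.

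The inert case splits further. If $p \notin \ram(B)$, write $L_p(\pi,s) = (1 - \alpha p^{-s})^{-1}(1 - \alpha^{-1} p^{-s})^{-1}$ with $\alpha + \alpha^{-1} = \lambda(p)$; then $\chi_D(p) = -1$ gives
\[
  L_p(\pi,s) L_p(\pi \otimes \chi_D,s) = (1 - \alpha^2 p^{-2s})^{-1}(1 - \alpha^{-2} p^{-2s})^{-1},
\]
which matches $L_{\mathfrak{p}}(\pi_D,s)$ with Satake parameters $\alpha^{\pm 2}$ at norm $p^2$; expanding the geometric series in $\mathcal{N}(\mathfrak{p})^{-s} = p^{-2s}$ yields the stated formula $\lambda_D(\mathfrak{p}^n) = \alpha^{2n} + \alpha^{2n-2} + \cdots + \alpha^{-2n}$. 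If instead $p \in \ram(B)$, then $\pi_p$ is Steinberg (or its unramified quadratic twist), so $L_p(\pi,s)$ is the single factor $(1 - \lambda(p) p^{-s})^{-1}$; combined with $L_p(\pi \otimes \chi_D,s) = (1 + \lambda(p) p^{-s})^{-1}$ this produces $(1 - \lambda(p)^2 p^{-2s})^{-1}$, matching $L_{\mathfrak{p}}(\pi_D,s)$ and giving $\lambda_D(\mathfrak{p}^n) = \lambda(p)^{2n} = \lambda(p^{2n}) = \lambda(\mathcal{N}(\mathfrak{p})^n)$, so the ``$\lambda_D(\mathfrak{p}^n)=\lambda(\mathcal{N}(\mathfrak{p}^n))$'' branch of the lemma is correct in this subcase as well.

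There is no real obstacle here; the only care needed is bookkeeping the Steinberg case at $p \in \ram(B)$ so that one does not mistakenly apply the inert formula with a degree-two Euler factor when $L_p(\pi,s)$ is in fact degree one. The key observation is that for those Steinberg primes the geometric-series sum $\alpha^{2n}+\cdots+\alpha^{-2n}$ collapses to a single term $\lambda(p)^{2n}$ which coincides with $\lambda(\mathcal{N}(\mathfrak{p})^n)$, so the exceptional case is genuinely restricted to inert $p \notin \ram(B)$ as stated.
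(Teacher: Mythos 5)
Your overall strategy is exactly the paper's: compare Euler factors at each rational prime $p$ in the identity $L(\pi_D,s) = L(\pi,s)L(\pi\otimes\chi_D,s)$ and read off $\lambda_D(\mathfrak{p}^n)$ case by case. The split case, the inert case with $p\notin\ram(B)$, and the inert Steinberg case $p\in\ram(B)$ all match the paper's computations and are correct.

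There is, however, a genuine gap in your treatment of the ramified case. You rule out the subcase ``$p$ ramified in $\mathcal{O}_D$ and $p\in\ram(B)$'' by appealing to Proposition~\ref{prop:friedberg-hoffstein}(iii), i.e.\ to the assumption that $(\chi_D)_p$ is unramified at every $p\in\ram(B)$. But the lemma is stated in \S\ref{sec:base-change} for an \emph{arbitrary} nontrivial fundamental discriminant $D$, with no conditions from \S\ref{sec:choice-quadr-field} imposed (indeed, \S\ref{sec:decay-norm-base} explicitly avoids relying on that section), so you cannot import that hypothesis: for a general $D$ a prime $p\in\ram(B)$ may well divide $D$. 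Your argument as written simply does not cover this subcase, and it is not vacuous for the statement being proved. The fix is short and is what the paper does: if $p\in\ram(B)$ then $\pi_p$ is an unramified twist of Steinberg, so $(\pi\otimes\chi_D)_p$ is a \emph{ramified} twist of Steinberg and again $L_p(\pi\otimes\chi_D,s)=1$; hence $L_{\mathfrak{p}}(\pi_D,s)=L_p(\pi,s)$, now a degree-one factor $(1-\lambda(p)p^{-s})^{-1}$, and since $\mathcal{N}(\mathfrak{p})=p$ one still gets $\lambda_D(\mathfrak{p}^n)=\lambda(p^n)=\lambda(\mathcal{N}(\mathfrak{p}^n))$. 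With that subcase added (and noting, as you implicitly do in the split case, that equal squares of power series with constant term $1$ coincide), your proof agrees with the paper's.
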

\begin{proof}
  The main point is that the local factors $L_p(\pi,s)$
  at primes $p  \in \ram(B)$
  have degree one.
  For convenience, we record the details.
  Let $p$ be a rational prime.  
By
taking Euler factors at $p$ of both sides of
\eqref{eqn:L-pi-D-s-defn},
we obtain
\[
  L_p(\pi_D,s)
  = L_p(\pi,s) L_p(\pi \otimes \chi_D,s).
\]
We view
this identity as one of formal power series in the
variable $X := p^{-s}$.
We argue case-by-case:
\begin{itemize}
\item Suppose that $p$ is split.  Let $\mathfrak{p}$ be any
  prime of $\mathcal{O}_D$ lying over $p$.  Since $\lambda_D$ is
  Galois-invariant, we have
  $L_p(\pi_D,s) = \left( \sum _{n \geq 0}
    \lambda_D(\mathfrak{p}^n) X^n \right)^2$.  On the other
  hand,
  since $p$ is split,
  the local component $(\chi_D)_p$ is trivial,
  and so
  $L_p(\pi,s) = L_p(\pi \otimes \chi_D,s) = \sum_{n \geq 0}
  \lambda(p^n) X^n$.  Since the $\mathbb{C}$-algebra of formal
  power series in $X$ is an integral domain, it follows that
  $\lambda_D(\mathfrak{p}^n) = \pm \lambda(p^n)$ for some sign
  $\pm$ and all $n \geq 0$.  By considering the case $n=0$, we
  deduce that the sign in question is $+$.
  Since $\mathcal{N} (\mathfrak{p}) = p$,
  the required identity follows.
\item Suppose next that $p$ is ramified.  Let $\mathfrak{p}$
  denote the unique prime of $\mathcal{O}_D$ lying over $p$.
  By our local assumptions concerning $\pi$
  and some case analysis,
  we have $L_p(\pi \otimes \chi_D, s) = 1$:
  \begin{itemize}
  \item If $p \notin \ram(B)$,
    then $\pi_p$ is an unramified representation
    while $(\chi_D)_p$ is a ramified character,
    so $(\pi \otimes \chi_D)_p$ is a principal series
    representation induced by a pair of ramified characters.
  \item If $p \in \ram(B)$, then $\pi_p$ is an unramified twist
    of Steinberg, hence $(\pi \otimes \chi_D)_p$ is a ramified
    twist of Steinberg.
  \end{itemize}
  In either case, the $L$-value in question is identically $1$
  (see \cite[\S1.14]{MR3889963}).
  Thus $L_p(\pi_D,s) = L_p(\pi,s)$.  The
  required identity $\lambda_D(\mathfrak{p}^n) = \lambda(p^n)$
  follows.
\item Suppose that $p$ is inert and that $p \in \ram(B)$.  Then
  $\pi_p$ is an unramified twist of Steinberg and $(\chi_D)_p$
  is an unramified character, hence
  $L_p(\pi,s) = (1 - \lambda(p) X)^{-1}$,
  $L_p(\pi \otimes \chi_D,s) = (1 + \lambda(p) X)^{-1}$ and
  $\lambda(p^n) = \lambda(p)^n$.  Therefore
  $L_p(\pi_D,s) = (1 - \lambda(p)^2 X^2)^{-1}$ and so
  $\lambda_D(\mathfrak{p}^n) = \lambda(p^2)^n = \lambda(p^{2 n})
  = \lambda(\mathcal{N} (\mathfrak{p}^n))$.
\item Suppose, finally, that $p$ is inert and that
  $p \notin \ram(B)$.  Let $\alpha$ be as in the statement, so
  that $\{\alpha, \alpha^{-1}\}$ is the multiset of Satake
  parameters for $\pi$ at $p$.
  Then
  $L_p(\pi,s) = (1 - \alpha X)^{-1}(1 - \alpha^{-1} X)^{-1}$,
  while
  $L_p(\pi \otimes \chi_D,s) = (1 + \alpha X)^{-1}(1 +
  \alpha^{-1} X)^{-1}$,
  hence
  \[
    \sum _{n \geq 0 }
    \lambda_D(\mathfrak{p}^n) X^{2 n}
    = L_p(\pi_D,s) = \frac{1}{(1 - \alpha^2 X^2) ( 1 - \alpha^{-2} X^2)}.
  \]
  By expanding the RHS
  as a product of geometric series
  and comparing coefficients of $X^{2 n}$, the required identity follows.
\end{itemize}
\end{proof}

For future reference,
we deduce some consequences of this description.
For a nonzero element $x$ of $\mathcal{O}_D$
we abbreviate $\lambda_D(x) := \lambda_D((x))$.

\begin{lemma}\label{lem:lambda-D-for-ideals}
  Suppose that $\mathfrak{a} \in \mathbb{N}_D$
  is not divisible by any rational prime $p$
  that is either split or inert.
  Let $d_1, d_2 \in \mathbb{N}$,
  with $d_1$ split and $d_2$ inert.
  Then
  $\lambda_D(d_1 d_2 \mathfrak{a})
  =
  \lambda(d_1) \lambda_D(d_2) \lambda(d_1 \mathcal{N}(\mathfrak{a}))$.
\end{lemma}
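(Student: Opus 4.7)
The plan is to decompose the statement into prime-power pieces using multiplicativity, apply the explicit formulas \eqref{eqn:lambda-D-prime-powers} case by case, and then recombine using ordinary multiplicativity of $\lambda$.

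First I would observe that the three factors $d_1$, $d_2$, and $\mathfrak{a}$ involve rational primes with pairwise disjoint splitting behavior (split, inert, ramified, respectively), so the corresponding ideals of $\mathcal{O}_D$ are pairwise coprime. Multiplicativity of $\lambda_D$ therefore gives
\[
\lambda_D(d_1 d_2 \mathfrak{a}) = \lambda_D(d_1 \mathcal{O}_D)\,\lambda_D(d_2 \mathcal{O}_D)\,\lambda_D(\mathfrak{a}),
\]
and $\lambda_D(d_2\mathcal{O}_D) = \lambda_D(d_2)$ by our notational convention.

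Next I would compute each factor. For the split factor, write $d_1 = \prod p^{n_p}$ with $p$ split so that $p\mathcal{O}_D = \mathfrak{p}_p \bar{\mathfrak{p}}_p$. Formula \eqref{eqn:lambda-D-prime-powers} gives $\lambda_D(\mathfrak{p}_p^{n_p}) = \lambda_D(\bar{\mathfrak{p}}_p^{n_p}) = \lambda(p^{n_p})$, and multiplicativity of both $\lambda_D$ and $\lambda$ then yields $\lambda_D(d_1 \mathcal{O}_D) = \prod_p \lambda(p^{n_p})^2 = \lambda(d_1)^2$. For the ramified piece, each prime divisor $\mathfrak{p}$ of $\mathfrak{a}$ satisfies $\mathcal{N}(\mathfrak{p}) = p$ (the ramified rational below), so \eqref{eqn:lambda-D-prime-powers} gives $\lambda_D(\mathfrak{p}^n) = \lambda(p^n) = \lambda(\mathcal{N}(\mathfrak{p}^n))$; multiplicativity of $\lambda$ on coprime arguments then yields $\lambda_D(\mathfrak{a}) = \lambda(\mathcal{N}(\mathfrak{a}))$.

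Assembling these pieces gives
\[
\lambda_D(d_1 d_2 \mathfrak{a}) = \lambda(d_1)^2 \,\lambda_D(d_2)\, \lambda(\mathcal{N}(\mathfrak{a})).
\]
Finally, since $d_1$ (a product of split primes) and $\mathcal{N}(\mathfrak{a})$ (a product of ramified primes) are coprime in $\mathbb{Z}$, multiplicativity of $\lambda$ gives $\lambda(d_1 \mathcal{N}(\mathfrak{a})) = \lambda(d_1)\lambda(\mathcal{N}(\mathfrak{a}))$, so the right-hand side above rewrites as $\lambda(d_1)\,\lambda_D(d_2)\,\lambda(d_1 \mathcal{N}(\mathfrak{a}))$, as required.

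The argument is essentially mechanical; there is no real obstacle, the only thing to watch is the split case, where the factor $\lambda(d_1)^2$ rather than $\lambda(d_1)$ arises (because a split prime contributes both $\mathfrak{p}$ and $\bar{\mathfrak{p}}$), and this squaring is precisely what allows the final repackaging into $\lambda(d_1)\lambda(d_1 \mathcal{N}(\mathfrak{a}))$.
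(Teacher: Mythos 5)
There is a genuine gap: you have misread the hypothesis on $\mathfrak{a}$. The assumption is that $\mathfrak{a}$ is not divisible by any \emph{rational} prime $p$ (i.e., by the ideal $p\mathcal{O}_D$) that is split or inert; it does \emph{not} say that $\mathfrak{a}$ is supported on ramified primes. For a split prime $p$ with $p\mathcal{O}_D = \mathfrak{p}\bar{\mathfrak{p}}$, the ideal $\mathfrak{a}$ may well be divisible by $\mathfrak{p}$ (just not by both $\mathfrak{p}$ and $\bar{\mathfrak{p}}$), and this is exactly the situation in the intended application, where the lemma is applied to $\mathfrak{a} = (x/d_1 d_2)$ in Lemma \ref{lem:explicate-lambda-D-principal}: after dividing out the largest split and inert rational divisors, the remaining principal ideal is typically still divisible by split prime ideals of $\mathcal{O}_D$. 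Your opening claim that $d_1\mathcal{O}_D$, $d_2\mathcal{O}_D$ and $\mathfrak{a}$ are pairwise coprime therefore fails in general, as does your final step $\lambda(d_1\mathcal{N}(\mathfrak{a})) = \lambda(d_1)\lambda(\mathcal{N}(\mathfrak{a}))$, since $d_1$ and $\mathcal{N}(\mathfrak{a})$ may share split prime factors.

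The case you omitted is precisely the one that dictates the shape of the formula. If $p$ is split, $p^{m_1}\,\|\,d_1$ and $\mathfrak{p}^{n}\,\|\,\mathfrak{a}$ with $\mathfrak{p}\mid p$, then
\begin{equation*}
  p^{m_1}\mathfrak{p}^{n} = \mathfrak{p}^{m_1+n}\,\bar{\mathfrak{p}}^{m_1},
  \qquad
  \lambda_D\bigl(p^{m_1}\mathfrak{p}^{n}\bigr)
  = \lambda_D\bigl(\mathfrak{p}^{m_1+n}\bigr)\lambda_D\bigl(\bar{\mathfrak{p}}^{m_1}\bigr)
  = \lambda\bigl(p^{m_1+n}\bigr)\lambda\bigl(p^{m_1}\bigr),
\end{equation*}
using \eqref{eqn:lambda-D-prime-powers}; this is the local identity that produces the factor $\lambda(d_1)\,\lambda(d_1\mathcal{N}(\mathfrak{a}))$, which in general is \emph{not} equal to $\lambda(d_1)^2\lambda(\mathcal{N}(\mathfrak{a}))$. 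The paper's proof first shows (using the hypothesis) that $\mathfrak{a}$ is divisible by at most one prime from each Galois orbit and is coprime to all inert primes, writes $\mathfrak{a} = \prod_p \mathfrak{p}(p)^{n(p)}$ over non-inert $p$, and then verifies the identity prime by prime, with the split case handled as above and the ramified and inert cases being the easy ones (the latter two are essentially the only cases your argument covers). To repair your proof you would need to add this split-prime computation and drop the coprimality and final repackaging steps, which are valid only in the special case where $\mathfrak{a}$ is ramified.
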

\begin{proof}
  Our hypothesis implies that we may write $\mathfrak{a}$ as a
  product $\prod_p \mathfrak{p}(p)^{n(p)}$, where
  \begin{itemize}
  \item $p$ runs over non-inert rational primes,
  \item $\mathfrak{p}(p)$ is a prime of $\mathcal{O}_D$ lying
    over $p$, hence of degree one (as is the case for any split
    or ramified prime), and
  \item $n(p) \in \mathbb{Z}_{\geq 0}$.
  \end{itemize}
  To see this, we observe first that for each inert rational
  prime $p$, no prime $\mathfrak{p}$ lying over $p$ divides
  $\mathfrak{a}$.  Otherwise, since $\mathfrak{p}$ is generated
  by $p$, we deduce that $\mathfrak{a}$ is divisible by $p$,
  contrary to hypothesis.  We observe next that if the rational
  prime $p$ splits in $\mathcal{O}_D$ as the product of prime
  ideals $\mathfrak{p}_1 \mathfrak{p}_2$, then at most one of
  the primes $\mathfrak{p}_1$ or $\mathfrak{p}_2$ may divide
  $\mathfrak{a}$, since otherwise we would again deduce that $p$
  divides $\mathfrak{a}$.
  It follows that $\mathfrak{a}$ is divisible by at most one
  element
  of each Galois orbit of primes of $\mathcal{O}_D$.
  The required product decomposition follows.

  Write $d_i = \prod_p p^{m_i(p)}$.
  For $p$ inert, set $n(p) := 0$.
  Then in all cases, $\mathcal{N}(\mathfrak{p}^{n(p)}) = p^{n(p)}$.
  By multiplicativity, the required formula
  reads
  \[
    \prod_p
    \lambda_D(p^{m_1(p) + m_2(p)} \mathfrak{p}(p)^{n(p)})
    =
    \prod_p
    \lambda(p^{m_1(p)})
    \lambda_D(p^{m_2(p)})
    \lambda(p^{m_1(p) + n(p)}).
  \]
  We verify this formula one rational prime $p$ at a time.
  For notational simplicity,
  we abbreviate $m_i := m_i(p)$,
  $\mathfrak{p} := \mathfrak{p}(p)$ and $n := n(p)$.
  \begin{itemize}
  \item Suppose that $p$ is split.
    Then $m_2 = 0$,
    so we must check that
    \[
      \lambda_D(p^{m_1} \mathfrak{p}^{n})
      = \lambda(p^{m_1})
      \lambda(p^{m_1 + n}).
    \]
    Let $\bar{\mathfrak{p}}$ denote the conjugate of
    $\mathfrak{p}$,
    so that $\{\mathfrak{p}, \bar{\mathfrak{p}}\}$ is the set of
    primes
    lying over $p$.
    Then $p^{m_1} \mathfrak{p}^{n}
    = \mathfrak{p}^{m_1 + n} \bar{\mathfrak{p}}^{m_1}$.
    By the multiplicativity of $\lambda_D$,
    it follows that
    $\lambda_D(p^{m_1} \mathfrak{p}^n)
    = \lambda_D(\mathfrak{p}^{m_1 + n})
    \lambda_D(\bar{\mathfrak{p}}^{m_1})$.
    The required identity then follows
    from \eqref{eqn:lambda-D-prime-powers}.
  \item
    If $p$ is inert,
    then $m_1 = n = 0$,
    so the required identity
    is the tautology
    $\lambda_D(p^{m_2})
      = \lambda_D(p^{m_2})$.
  \item Suppose that $p$ is ramified.
    Then $m_1 = m_2 = 0$,
    so we must check that
    $\lambda_D(\mathfrak{p}^n) = \lambda(p^n)$,
    which again follows from
  \eqref{eqn:lambda-D-prime-powers}.
  \end{itemize}
\end{proof}

\begin{lemma}\label{lem:explicate-lambda-D-principal}
  Let $x = (n + \ell \sqrt{D})/2$ be a nonzero
  element of $\mathcal{O}_D$.
  Let $d_1$ (resp. $d_2$)
  denote the largest split (resp. inert) natural number
  dividing $x$.
  Then
  \begin{equation}\label{eqn:lambda-D-via-lambda}
    \lambda_D(x)
    =
    \lambda(d_1)
    \lambda_D(d_2)
    \lambda \left(\frac{|n^2 - D \ell^2|}{4 d_1 d_2^2} \right).
  \end{equation}
\end{lemma}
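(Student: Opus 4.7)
The plan is to reduce the statement to Lemma \ref{lem:lambda-D-for-ideals} by factoring the ideal $(x)$ appropriately and computing its norm.

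First I would check that $d_1 d_2$ divides $x$ in $\mathcal{O}_D$. Since $d_1$ is a product of split primes and $d_2$ of inert primes, these rational integers are coprime, so it suffices to know that each of them individually divides $x$ — which is the defining property. Hence $y := x/(d_1 d_2)$ lies in $\mathcal{O}_D$ and we may set $\mathfrak{a} := (y)$, giving the ideal factorization $(x) = (d_1)(d_2)\mathfrak{a}$.

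Next I would verify the hypothesis of Lemma \ref{lem:lambda-D-for-ideals}, namely that $\mathfrak{a}$ is not divisible by any rational prime $p$ which is split or inert in $\mathcal{O}_D$. If some split prime $p$ divided $\mathfrak{a}$, then $p d_1$ would be a split natural number dividing $x$, contradicting the maximality of $d_1$; the inert case is analogous, contradicting the maximality of $d_2$. Applying Lemma \ref{lem:lambda-D-for-ideals} then yields
\begin{equation*}
  \lambda_D(x) = \lambda_D(d_1 d_2 \mathfrak{a}) = \lambda(d_1)\,\lambda_D(d_2)\,\lambda\bigl(d_1\, \mathcal{N}(\mathfrak{a})\bigr).
\end{equation*}

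Finally I would compute $\mathcal{N}(\mathfrak{a})$. Since $\mathcal{N}((d)) = d^2$ for a positive rational integer $d$, multiplicativity of the norm gives $\mathcal{N}((x)) = d_1^2 d_2^2 \,\mathcal{N}(\mathfrak{a})$. On the other hand, writing $\bar{x} = (n - \ell\sqrt{D})/2$ for the Galois conjugate, we have $x \bar{x} = (n^2 - D\ell^2)/4$, hence $\mathcal{N}((x)) = |n^2 - D\ell^2|/4$. Therefore $d_1\,\mathcal{N}(\mathfrak{a}) = |n^2 - D\ell^2|/(4 d_1 d_2^2)$, and substituting this into the previous display produces the formula \eqref{eqn:lambda-D-via-lambda}. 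There is no real obstacle here; the only point requiring mild care is the correct identification of the maximality of $d_1, d_2$ with the ``no split or inert prime divides $\mathfrak{a}$'' hypothesis of the preceding lemma.
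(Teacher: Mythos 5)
Your proposal is correct and follows exactly the paper's route: the paper's proof consists of the single line ``apply Lemma \ref{lem:lambda-D-for-ideals} to $\mathfrak{a} := (x/d_1 d_2)$,'' and you have simply made explicit the details left implicit there (that $d_1 d_2 \mid x$, that maximality of $d_1, d_2$ gives the hypothesis of that lemma, and the norm computation $d_1 \mathcal{N}(\mathfrak{a}) = |n^2 - D\ell^2|/(4 d_1 d_2^2)$), all of which are accurate.
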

\begin{proof}
  We apply
  lemma \ref{lem:lambda-D-for-ideals}
  to $\mathfrak{a} := (x/d_1 d_2)$.
\end{proof}

Assume for the remainder of this section that $D$ is positive,
so that $\mathbb{Q}(\sqrt{D})$
splits at $\infty$.
Each archimedean local component $(\pi_D)_{\infty_1},
(\pi_D)_{\infty_2}$
of $\pi_D$
is then isomorphic to the archimedean local component $\pi_\infty$
of $\pi$,
which is the holomorphic discrete series representation
of $\PGL_2(\mathbb{R})$
with weights
\begin{equation}\label{eqn:weights-k-etc}
    \{\dotsc, -k-2, -k, k, k+2, k+4, \dotsc \},
\end{equation}
each occurring with multiplicity one.  In particular,
\begin{equation}\label{eqn:L-infinity-ad-pi-D}
  L_\infty(\ad(\pi_D),s)
  = L_\infty(\ad(\pi),s)^2.
\end{equation}

The pure tensors $\varphi_D \in \pi_D$ for which
\begin{itemize}
\item the local
  component $(\varphi_D)_\mathfrak{p}$ at each finite place
  $\mathfrak{p}$ of $\mathbb{Q}(\sqrt{D})$
  is a newvector \cite{MR0337789}, and
\item the archimedean component $(\varphi_D)_\infty$ has weight
  $(k,-k)$
\end{itemize}
span a one-dimensional
space.
We normalize a specific element $\varphi_D$ of this space,
as follows.
We will use the notation
\begin{equation}\label{eqn:}
  n(x) := \begin{pmatrix}
    1 & x \\
    & 1
  \end{pmatrix},
  \quad
  a(y) := \begin{pmatrix}
    y &  \\
    & 1
  \end{pmatrix}
\end{equation}
to describe elements of $\PGL_2$ over a ring.  Let $\psi_D$
denote the nontrivial unitary character of
$\mathbb{A}_{\mathbb{Q}(\sqrt{D})}/\mathbb{Q}(\sqrt{D})$ whose
infinite component $(\psi_D)_\infty$ is given by
$(x_1,x_2) \mapsto e(x_1 + x_2)$.
For each finite prime $\mathfrak{p}$ of $\mathbb{Q}(\sqrt{D})$,
the local component $(\psi_D)_\mathfrak{p}$
is then trivial on the local component
$\mathfrak{d}_\mathfrak{p}^{-1}$
of the inverse different, but not on any larger fractional
$(\mathcal{O}_D)_\mathfrak{p}$-ideal.
We have the
$\psi_D$-Whittaker expansion
$\varphi_D(g) = \sum _{\xi \in \mathbb{Q}(\sqrt{D})^\times } W
(a(\xi) g)$, where $W$ is the $\psi_D$-Whittaker function given
by the integral
$W(g) = \int_{x \in \mathbb{A}_{\mathbb{Q}(\sqrt{D})} /
  \mathbb{Q}(\sqrt{D})} \varphi_D (n(x) g) \psi_D(-x) \, d x$
with respect to the probability Haar $d x$.  Since $\varphi_D$
is a pure tensor, we may factor $W(g)$ as a product
$\prod_{\mathfrak{p}} W_\mathfrak{p}(g_\mathfrak{p})$ over all
places $\mathfrak{p}$ of $\mathbb{Q}(\sqrt{D})$.
We normalize
$\varphi_D$ in terms of its Whittaker function
$W$, as follows:
\begin{itemize}
\item Let
  $y = (y_\mathfrak{p})_{\mathfrak{p} < \infty}$
  be an element of the finite
  ideles of $\mathbb{Q}(\sqrt{D})$ with corresponding fractional
  ideal $\mathfrak{a}$.
  Thus
  the local component $\mathfrak{a}_\mathfrak{p}$
  is the fractional $(\mathcal{O}_D)_\mathfrak{p}$-ideal
  generated by
  $y_\mathfrak{p}$.
  We
  require that
  $\prod_\mathfrak{p} W_\mathfrak{p}(a(y_\mathfrak{p})) = 0$
  unless the $\mathfrak{d} \mathfrak{a}$ is an integral ideal, in
  which case
  \begin{equation}\label{eqn:}
    \prod_\mathfrak{p} W_\mathfrak{p}(a(y_\mathfrak{p})) =
    \lambda_D(\mathfrak{d} \mathfrak{a}) / \mathcal{N}(\mathfrak{d}
    \mathfrak{a})^{1/2},
  \end{equation}
  where $\lambda := \lambda_{\varphi} : \mathbb{N} \rightarrow
  \mathbb{C}$
  describes the normalized Hecke eigenvalues
  of $\varphi$ as in \S\ref{sec:introduction}.
  For this normalization to make sense,
  we need to know
  the local
  $(\psi_D)_{\mathfrak{p}}$-Whittaker function
  $W_\mathfrak{p}$
  described in the
  $(\psi_D)_{\mathfrak{p}}$-Kirillov model by
  \begin{equation}\label{eqn:local-kirillov-W-frak-p}
    y_\mathfrak{p} \mapsto
    \begin{cases}
      \lambda_D(\mathfrak{d}_\mathfrak{p}
      \mathfrak{a}_\mathfrak{p})
      / \mathcal{N}(\mathfrak{d}_\mathfrak{p}
      \mathfrak{a}_\mathfrak{p})^{1/2}
      & \text{ if } \mathfrak{d}_\mathfrak{p}
      \mathfrak{a}_\mathfrak{p}
      \text{ is integral}, \\
      0 & \text{ otherwise}
    \end{cases}
  \end{equation}
  is a newvector.  By translating by a generator of
  $\mathfrak{d}_\mathfrak{p}$, we may assume that
  $\mathfrak{d}_\mathfrak{p} = 1$ and
  $(\psi_D)_\mathfrak{p}$ is  unramified.  We observe then that
  \eqref{eqn:local-kirillov-W-frak-p} defines an
  $(\mathcal{O}_D)_\mathfrak{p}^\times$-invariant function on
  $\mathbb{Q}(\sqrt{D})_\mathfrak{p}^\times$ whose Mellin
  transform is the local Euler factor
  $L_\mathfrak{p}(\pi_D,s) = \sum_{k \geq 0}
  \lambda_D(\mathfrak{p}^k)/ \mathcal{N}(\mathfrak{p})^{k s}$ of
  \eqref{eqn:L-pi-D-s-defn}.
  We then appeal to the fact  \cite{MR620708}
  that when
  $(\psi_D)_\mathfrak{p}$ is  unramified,
  any vector with these properties is a newvector.

\item For an element $y = (y_1,y_2)$
  of $\mathbb{Q}(\sqrt{D})_\infty^\times
  = \mathbb{R}^\times \times \mathbb{R}^\times$,
  we require that
  \begin{equation}\label{eqn:}
    \prod_{j=1,2} W_{\infty_j}(a(y_j))
    = W_k(y_1) W_k(-y_2),
  \end{equation}
  where
  $W_k : \mathbb{R}^\times \rightarrow \mathbb{C}$ denotes the
  $L^2$-normalized Whittaker function
  \begin{equation}\label{eqn:defn-W-k}
    W_k(y) := 1_{y>0} \Gamma(k)^{-1/2} (4 \pi y)^{k/2} e^{-2 \pi   y}.
  \end{equation}
  For this normalization to make sense,
  we need to know that $y \mapsto W_k(\pm y)$
  defines a vector of weight $\pm k$
  in the holomorphic discrete series representation
  of $\PGL_2(\mathbb{R})$
  with weights \eqref{eqn:weights-k-etc}.
  For this fact,
  we refer to \cite[(80), (82)]{MR3889963}.
\end{itemize}

By restricting $\varphi_D$ to the identity component of
$\PGL_2(\mathbb{A}_{ \mathbb{Q}(\sqrt{D})})$, we obtain a
function on $\PGL_2(\mathbb{R})^+ \times \PGL_2(\mathbb{R})^+$ that identifies with a
Hilbert modular form of weight $(k,-k)$.
We denote
that Hilbert modular form again by
$\varphi_D : \mathbb{H} \times \mathbb{H} \rightarrow
\mathbb{C}$.
Explicitly,
for $z_j = x_j + i y_j$
we set $\varphi_D(z_1,z_2) := \varphi_D(g)$,
where $g_\mathfrak{p} = 1$ for finite $\mathfrak{p}$
and $g_{\infty_j}
= n(x_j) a(y_j)$.
The Fourier expansion of this Hilbert modular
form reads
\begin{equation}\label{eqn:varphi-D-fourier}
  \varphi_D(z_1,z_2)
  =
  \sum _{0 \neq m \in \mathfrak{d}^{-1}}
  e(m_1 x_1 + m_2 x_2)
  W_k(m_1 y_1)
  W_k(-m_2 y_2)
  \frac{\lambda_{D}(m \mathfrak{d})}{
    \mathcal{N} (m \mathfrak{d})^{1/2}
  },
\end{equation}
where
\begin{itemize}
\item $\mathfrak{d}^{-1}$
  is the inverse different,
  i.e.,
  the fractional ideal
  $(1/\sqrt{D})$, and
\item $m_1, m_2$ denote the images of $m$ under
  the real embeddings of $\mathbb{Q}(\sqrt{D})$.
\end{itemize}

We define the $L^2$-norm $\|\varphi_D\|$ by integrating over
$\PGL_2(\mathbb{Q}(\sqrt{D})) \backslash
\PGL_2(\mathbb{A}_{\mathbb{Q}(\sqrt{D})})$ with respect to a
Haar measure.
\begin{lemma}\label{lem:norm-varphi-D}
  \begin{equation}\label{eqn:norm-varphi-D}
    \|\varphi_D\|^2
    = c L(\ad(\pi_D), 1),
  \end{equation}
  where $c > 0$ depends only upon $B$, $D$ and the choice of
  Haar measure.
\end{lemma}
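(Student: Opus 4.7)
The plan is to compute $\|\varphi_D\|^2$ by the Rankin--Selberg unfolding method over $\mathbb{Q}(\sqrt{D})$. I would introduce a degenerate Eisenstein series $E(g,s)$ on $\PGL_2(\mathbb{A}_{\mathbb{Q}(\sqrt{D})})$ induced from $|\cdot|^s$, consider the integral
\[
  I(s) := \int_{\PGL_2(\mathbb{Q}(\sqrt{D})) \backslash \PGL_2(\mathbb{A}_{\mathbb{Q}(\sqrt{D})})} |\varphi_D(g)|^2 E(g,s) \, dg,
\]
and unfold against the Whittaker expansion \eqref{eqn:varphi-D-fourier}. Because $\varphi_D$ is a pure tensor, the unfolded integral factors as a product of local zeta integrals, one at each place of $\mathbb{Q}(\sqrt{D})$. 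Globally, the unfolded expression coincides with the completed Rankin--Selberg $L$-function $\Lambda(\pi_D \times \tilde{\pi}_D, s)$ up to Haar-measure constants, and this factors as $\Lambda_{\mathbb{Q}(\sqrt{D})}(s) \Lambda(\ad \pi_D, s)$.

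Next, I would take the residue at $s=1$. Since $E(g,s)$ has a simple pole there with constant residue $c_1(D)$ depending only on $D$ and the Haar measure, and since $\Lambda_{\mathbb{Q}(\sqrt{D})}(s)$ contributes the pole of the factorization while $\Lambda(\ad \pi_D, s)$ is holomorphic and nonzero at $s=1$, one obtains
\[
  \|\varphi_D\|^2 = c_2(D) \, \Lambda(\ad \pi_D, 1)
\]
up to the local zeta-integral ratios at each place. To descend from $\Lambda(\ad \pi_D, 1)$ to the finite $L$-value $L(\ad \pi_D, 1)$, I would invoke the identity \eqref{eqn:L-infinity-ad-pi-D} and evaluate the local factors as follows. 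At each archimedean place, the $L^2$-normalization \eqref{eqn:defn-W-k}, which yields $\int_0^\infty W_k(y)^2 \, dy/y = 1$ via a direct gamma-function computation, makes the ratio of the local Rankin--Selberg integral to $L_{\infty_j}(\ad \pi_D, 1)$ independent of $k$. At each finite prime $\mathfrak{p}$, the local component $W_\mathfrak{p}$ is a newvector, and the standard newvector Rankin--Selberg calculation (as in Jacquet--Langlands or Bump) evaluates the local integral as $L_\mathfrak{p}(\ad \pi_D, 1)$ times a constant depending only on the residue field and on the conductor, hence only on $B$ and $D$.

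The main obstacle is bookkeeping rather than substance: at the finitely many primes $\mathfrak{p}$ of $\mathcal{O}_D$ above $\ram(B)$, the local component $\pi_{D,\mathfrak{p}}$ is an unramified twist of Steinberg (since unramified twists of Steinberg base-change to the same under an unramified quadratic extension, as our choice of $D$ forces such $p$ to be inert), so the relevant local Rankin--Selberg integral is the Steinberg case rather than the spherical one; at all other finite primes the calculation is the standard spherical one. In both cases the answer is $L_\mathfrak{p}(\ad \pi_D, 1)$ times a purely local constant, and assembling these local evaluations yields the claimed identity with $c > 0$ depending only on $B$, $D$, and the Haar measure.
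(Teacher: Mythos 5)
Your proposal is correct and is essentially the paper's own proof: the paper simply cites the normalization $\int_{y \in \mathbb{R}^\times} |W_k(y)|^2 \, \tfrac{dy}{|y|} = 1$ together with the standard Rankin--Selberg formula for the $L^2$-norm of a newform (as in \cite[Lem 2.2.3]{michel-2009}), which is exactly the unfolding argument you spell out, including the observation that the finitely many Steinberg-type local factors at primes above $\ram(B)$ contribute only constants depending on $B$ and $D$. One small bookkeeping remark: what is independent of $k$ is the archimedean zeta integral itself (it equals $1$ by the $L^2$-normalization \eqref{eqn:defn-W-k}), not its ratio to $L_{\infty_j}(\ad \pi_D,1)$; the $k$-dependence of that ratio cancels against the archimedean factor inside $\Lambda(\ad \pi_D,1)$ via \eqref{eqn:L-infinity-ad-pi-D}, so your conclusion is unaffected.
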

\begin{proof}
  This follows from the normalization
  $\int_{y \in \mathbb{R}^\times} |W_k(y)|^2 \, \frac{d y}{|y|} = 1$
  and a standard formula obtained via Rankin--Selberg theory,
  see for instance \cite[\S3.2.2]{nelson-variance-II} or
  \cite[Lem 2.2.3]{michel-2009}.
\end{proof}

\section{Twisted triple products}\label{sec:twist-triple-prod}
We first choose an element $\Psi '$ of the Jacquet--Langlands
lift $\sigma$ of $\sigma^B$.  Recall that for a finite place $p$
of $\mathbb{Q}$, the local component $\sigma_p$ is unramified
for $p \notin \ram(B)$ and is the Steinberg representation for
$p \in \ram(B)$.  We fix a nonzero pure tensor
$\Psi ' \in \sigma$ whose local component at each finite place
is a newvector and whose archimedean component has weight $0$.
It is invariant for each prime $p$ by the action of the unit
group of the order $\begin{pmatrix}
  \mathbb{Z}_p & \mathbb{Z}_p \\
  d_B \mathbb{Z}_p & \mathbb{Z}_p
\end{pmatrix}$, where
\begin{equation}\label{eq:}
  d_B := \prod_{p \in \ram(B)} p
\end{equation}
denotes
the reduced discriminant of $B$.  We write also
$\Psi ' : \mathbb{H} \rightarrow \mathbb{C}$ for the cuspidal
Hecke--Maass eigenform on $\Gamma_0(d_B) \backslash \mathbb{H}$
given by $\Psi'(x + i y) := \Psi'(g)$ with $g_p = 1$ for finite
$p$ and $g_\infty = n(x) a(y)$.  The Fourier expansion of
$\Psi'$ may be written
\begin{equation}\label{eqn:Psi-prime-fourier}
  \Psi '(x+iy)
  = \sum _{0 \neq \ell \in \mathbb{Z} }
  \frac{\rho(|\ell|)}{|\ell|^{1/2}}
  W_{\Psi}(\ell y) e(\ell x),
\end{equation}
where $\rho : \mathbb{N} \rightarrow \mathbb{C}$
denotes the normalized Hecke eigenvalue
and $W_\Psi : \mathbb{R}^\times \rightarrow \mathbb{C}$
  the Whittaker function.
  For mild convenience, we may and shall
  assume that $\Psi '$ is real-valued.
  We write $\|\Psi '\|$ for the $L^2$-norm,
  defined with respect to some Haar measure.

  Let $D$ be a nontrivial fundamental discriminant.
  Let $\varphi_D$
  be as in \S\ref{sec:base-change}.
We denote by $\res(\varphi_D)$ the restriction of $\varphi_D$ to
$\PGL_2(\mathbb{A})$.  We may form the integral
\[
  \int \res(\varphi_D) \Psi'
\]
taken over
$\PGL_2(\mathbb{Q}) \backslash \PGL_2(\mathbb{A})$ with respect
to some Haar measure.

We record a specialized form of Ichino's twisted triple product
formula \cite{MR2449948}.  The statement involves Asai
$L$-functions and their Rankin--Selberg convolutions.  For a
summary of the relevant properties of these, we refer to
\cite[\S4.1]{MR4193477} and its references.
For our purposes,
what matters
is just
the factorization
\begin{equation}\label{eqn:factor-twisted-asai}
  \Lambda(\asai(\pi_D) \times \sigma,s)
  =
  \Lambda(\ad(\pi) \times \sigma, s)
  \Lambda(\sigma \otimes \chi_D, s).
\end{equation}
\begin{proposition}\label{prop:twisted-watson}
  Retain, as usual,
  the assumptions of \S\ref{sec:notat-prel-reduct}.
  Let $D$ be a positive nontrivial fundamental discriminant.
  There is a finite subset $\mathfrak{c} \subseteq
  \mathbb{R}_{\geq 0}$,
  depending only upon $B, D$ and the choices of Haar measure,
  so that
  \begin{equation}\label{eqn:ichino-twisted-specialized}
    \frac{|\int \res(\varphi_D) \Psi'|^2}{\|\varphi_D\|^2 \|\Psi '\|^2}
    =
    c
    \frac{
      \Lambda(\asai(\pi_D) \times \sigma,1/2)
    }{
      \Lambda(\ad (\pi_D), 1)
    }
  \end{equation}
  for some $c \in \mathfrak{c}$.
  If $D$ satisfies
  the local conditions (ii) and (iii)
  of Proposition \ref{prop:friedberg-hoffstein},
  then $c > 0$.
\end{proposition}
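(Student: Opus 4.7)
The plan is to invoke the twisted triple product formula of Ichino \cite{MR2449948} in the restriction setting $\PGL_2/\mathbb{Q} \hookrightarrow \PGL_2/\mathbb{Q}(\sqrt{D})$, so that the relevant $L$-function is the Asai $L$-function $\Lambda(\asai(\pi_D) \times \sigma, s)$. That formula expresses the squared restriction period as the product of the expected central $L$-value and a product of normalized local matrix coefficient integrals $I_v$, in the form
\[
\frac{|\int \res(\varphi_D) \Psi'|^2}{\|\varphi_D\|^2 \, \|\Psi '\|^2}
=
c_{\mathrm{glob}}
\cdot
\frac{\Lambda(\asai(\pi_D) \times \sigma, 1/2)}{\Lambda(\ad(\pi_D),1)\, \Lambda(\ad(\sigma),1)}
\cdot
\prod_v I_v,
\]
with $c_{\mathrm{glob}}$ depending only on the Haar measure normalizations. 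Since $\sigma$ is fixed throughout, $\Lambda(\ad(\sigma),1)$ is a positive constant that may be absorbed into the overall constant.

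Next I would evaluate the local integrals $I_v$. For each finite prime $p \notin \ram(B)$, the representations $\pi_p, \pi_{D,\mathfrak{p}}, \sigma_p$ and the selected vectors are all unramified, and the standard unramified computation gives $I_v = 1$. At the archimedean place, the restriction of a Hilbert form of weight $(k,-k)$ to the diagonal yields a weight-zero form paired against the weight-zero Maass form $\Psi'$; unfolding using the explicit Whittaker function \eqref{eqn:defn-W-k} reduces $I_\infty$ to an integral with non-negative integrand, whose value can be computed in closed form and shown to be a strictly positive constant independent of $k$ (once the $L^2$-normalization of $W_k$ is taken into account, so that archimedean factors of $L_\infty(\ad(\pi_D),1) = L_\infty(\ad(\pi),1)^2$ are absorbed).

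The main technical obstacle is the local computation of $I_p$ at each $p \in \ram(B)$. At such $p$, Proposition \ref{prop:friedberg-hoffstein}(iii) ensures that $p$ is inert in $\mathcal{O}_D$, so $\pi_{D,\mathfrak{p}}$ is an unramified principal series of $\PGL_2(\mathbb{Q}_{p^2})$, while $\sigma_p$ is Steinberg and $\pi_p$ is either Steinberg or its unramified quadratic twist. The newvectors in these representations are explicit, and $I_p$ can be computed in closed form by unfolding in the Kirillov model. Since $\ram(B)$ is finite and $\pi_p$ has only two possibilities at each such place, the product $\prod_{p \in \ram(B)} I_p$ ranges over a finite set of real numbers as $\pi$ varies, yielding the finite set $\mathfrak{c}$ claimed in the statement.

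Finally, for the positivity claim: the archimedean integral and the unramified finite integrals are all strictly positive. Under the local conditions (ii), (iii) of Proposition \ref{prop:friedberg-hoffstein}, the local data at each $p \in \ram(B)$ is exactly that for which the local period between the $\PGL_2(\mathbb{Q}_{p^2})$-newvector and the Steinberg newvector on $\PGL_2(\mathbb{Q}_p)$ is non-degenerate; a direct Kirillov-model computation then shows $I_p > 0$, and combined with the previous positivity statements this gives $c > 0$ as required.
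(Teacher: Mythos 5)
Your overall framework (Ichino's twisted triple product formula, unramified local integrals equal to $1$, finiteness of $\mathfrak{c}$ from the finitely many possibilities at $p \in \ram(B)$, positivity from nonvanishing of the remaining local factors) matches the paper's proof, but your treatment of the decisive local step at $p \in \ram(B)$ contains a genuine error. You assert that, since $p$ is inert in $\mathcal{O}_D$, the local component $(\pi_D)_\mathfrak{p}$ is an \emph{unramified principal series} of $\PGL_2(\mathbb{Q}(\sqrt{D})_\mathfrak{p})$. It is not: $\pi_p$ is Steinberg or its unramified quadratic twist, and base change along the unramified quadratic extension sends both to the (untwisted) Steinberg representation of $\PGL_2(\mathbb{Q}(\sqrt{D})_\mathfrak{p})$ (the twisting character is trivial on norms; see \cite[Prop 2(b)]{MR546613}), which is square-integrable and has no $\GL_2(\mathcal{O})$-fixed vector. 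This is not a cosmetic slip: the whole point of imposing condition (iii) on $D$ is that the resulting Steinberg--Steinberg local pair is locally distinguished, and the nonvanishing of $I_p$ is supplied by the exact evaluation for precisely this configuration \cite[Prop 4.8, part (2)]{MR4033818}. The computation you propose (``newvector of an unramified principal series of $\PGL_2(\mathbb{Q}_{p^2})$ against the Steinberg newvector'') is a different local integral and does not establish what is needed; note also that $I_p$ is a matrix-coefficient integral, not a Whittaker/Kirillov unfolding, so the asserted ``direct Kirillov-model computation'' would have to be reworked in any case.

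Two smaller points. First, at finite primes $p \notin \ram(B)$ dividing $D$, the local quadratic extension is ramified, so the ``standard unramified computation'' (Ichino's unramified lemma) does not literally apply even though the representations are unramified; the paper instead invokes \cite[Prop 4.7, part (1)]{MR4033818} to get $I_p = 1$ there. Second, the archimedean factor $I_\infty$ is a matrix-coefficient integral over $\PGL_2(\mathbb{R})$ whose nonvanishing is not immediate from positivity of an unfolded integrand; the paper quotes \cite[Prop 6.14]{MR4193477} (or reduces to Watson's computation), and if you want a self-contained archimedean argument you would need to carry that computation out rather than assert it.
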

\begin{proof}
  Ichino's formula tells us that
  \eqref{eqn:ichino-twisted-specialized}
  holds
  with $c$ the multiple by a nonzero
  constant of a (finite) product $\prod_p I_p$ over all places $p$
  of $\mathbb{Q}$ of normalized local integrals $I_p$.  To describe
  the local integrals, we fix unitary factorizations
  $\pi_D = \otimes_p (\pi_D)_p$ and $\sigma = \otimes_p \sigma_p$.
  Thus $(\pi_D)_p$ is the tensor product of the components
  $(\pi_D)_\mathfrak{p}$ taken over all places $\mathfrak{p}$ of
  $\mathbb{Q}(\sqrt{D})$ lying over $p$.
  We obtain corresponding factorizations
  $\varphi_D = \otimes_p (\varphi_D)_p$ and
  $\Psi' = \otimes_p \Psi'_p$
  of our vectors.
  Then
  \begin{equation}\label{eqn:}
    I_p
    =
    L_p^{-1}
    \int_{g \in \PGL_2(\mathbb{Q}_p)}
    \frac{
      \langle g (\varphi_D)_p, (\varphi_D)_p \rangle
      \langle g \Psi'_p, \Psi'_p \rangle
    }
    {
      \langle (\varphi_D)_p, (\varphi_D)_p \rangle
      \langle \Psi'_p, \Psi'_p \rangle
    }
    \, d g,
  \end{equation}
  where
  \begin{equation}\label{eqn:}
    L_p
    =
    \frac{\zeta_{\mathbb{Q}(\sqrt{D})_p}(2)
      \zeta_{\mathbb{Q}_p}(2)}{\zeta_{\mathbb{Q}_p}(2)}
    \frac
    {
      L_p(\asai(\pi_D) \times \sigma,1/2)
    }
    {
      L_p(\ad(\pi_D), 1)
      L_p(\ad(\sigma), 1)
    }.
  \end{equation}
  For precise normalizations we refer to \cite{MR2449948}.
  These local integrals have all already been computed in the
  literature, so our task is just to assemble the relevant
  computations.  We rely primarily upon the works of Chen--Cheng
  \cite{MR4033818} and Cheng
  \cite{MR4193477}.

  It is shown in \cite[Prop 6.14]{MR4193477}
  that $I_\infty$ is a nonzero constant.
  (Alternatively, we
  may reduce to Watson's calculations and the comparison of
  local integrals proved in \cite{MR2449948} by noting that
  $I_\infty$ is the same local integral that appears in the
  setting of Proposition \ref{prop:watson}.)

  Let $p$ be a finite prime not in $\ram(B)$.  Then $(\pi_D)_p$
  and $\sigma_p$ are unramified.
  By appeal to
  \begin{itemize}
  \item \cite[Lem 2.2]{MR2449948} or \cite[Prop 4.5, part
    (1)]{MR4033818} if $\mathbb{Q}(\sqrt{D})_p$ is
    the split extension $\mathbb{Q}_p \times \mathbb{Q}_p$ or an
    unramified quadratic extension, and
  \item\cite[Prop 4.7, part
  (1)]{MR4033818} if it is a ramified quadratic
  extension, 
\end{itemize}
we see that $I_p = 1$.

  Let $p \in \ram(B)$.  There are only finitely many possibilities for the splitting behavior of $p$ and for the isomorphism classes of $\pi_p$ and $\sigma_p$, hence only
  finitely many possibilities for $I_p$.  Suppose now that $D$
  satisfies the indicated local conditions.  Then $p$ is inert
  in $\mathcal{O}_D$.  In particular, there is a unique prime
  $\mathfrak{p}$ lying over $p$.  We have noted already that
  $\sigma_p$ is the Steinberg representation and that $\pi_p$ is
  the twist of the Steinberg representation by some (possibly
  trivial) unramified quadratic character $\eta$ of
  $\mathbb{Q}_p^\times$.  The local component
  $(\pi_D)_p = (\pi_D)_\mathfrak{p}$ is the local base change of
  $\pi_p$.  Since
  $\mathbb{Q}(\sqrt{D})_\mathfrak{p}/\mathbb{Q}_p$ is
  unramified, the character $\eta$ restricts trivially to the
  image $\mathbb{Q}_p^{\times 2}$ of the norm map from
  $\mathbb{Q}(\sqrt{D})_\mathfrak{p}^\times$.  By \cite[Prop 2
  (b)]{MR546613}, we deduce that $(\pi_D)_\mathfrak{p}$ is the
  (untwisted) Steinberg representation of
  $\PGL_2(\mathbb{Q}(\sqrt{D})_\mathfrak{p}$.  Under these
  conditions, an exact formula for $I_p$ is given by \cite[Prop
  4.8, part (2)]{MR4033818}, confirming in particular
  that $I_p \neq 0$.
  Thus \eqref{eqn:ichino-twisted-specialized} holds with $c > 0$.
\end{proof}

\begin{remark}
  The key feature of Proposition \ref{prop:twisted-watson} is
  that for $D$ as in Proposition \ref{prop:friedberg-hoffstein},
  the constant $c$ is nonzero.  This property is an analytic
  incarnation of the existence of a nonzero
  $\PGL_2(\mathbb{A})$-invariant functional on
  $\pi_D \otimes \sigma$.  It relies crucially on our choice of
  $D$, specifically on the local conditions at primes
  $p \in \ram(B)$.
\end{remark}

To summarize,
we record a preliminary result
towards Theorem \hyperlink{thm:impl}{A}.
\begin{theorem}\label{thm:goalpost}
  Assume for some positive nontrivial fundamental discriminant
  $D$ satisfying the conclusions of Proposition
  \ref{prop:friedberg-hoffstein} 
  that
  \begin{equation}\label{eqn:goalpost}
    \frac{\int \res(\varphi_D) \Psi '}{L(\ad \pi,1)}
    \rightarrow  0
  \end{equation}
  as $k \rightarrow \infty$.
  Then the conclusion of Conjecture \ref{conj:hque-compact}
  holds.
\end{theorem}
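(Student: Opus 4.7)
The plan is to verify that the hypothesis \eqref{eqn:goalpost} combined with Watson's formula and the twisted triple product formula forces the left hand side of \eqref{eqn:hque-conj} to tend to zero, which suffices since we have reduced (in \S\ref{sec:notat-prel-reduct}) to the case that $\Psi$ is a Hecke--Maass cusp form, in which case the right hand side of \eqref{eqn:hque-conj} vanishes. Concretely, I will show that the ratio
\[
  \frac{|\mathrm{LHS}\text{ of }\eqref{eqn:hque-conj}|^2}{|\int \res(\varphi_D) \Psi '|^2 / L(\ad \pi, 1)^2}
\]
is a positive constant independent of $k$, so that \eqref{eqn:goalpost} is equivalent to the vanishing of the LHS of \eqref{eqn:hque-conj} in the limit.

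First, I apply Watson's formula (Proposition \ref{prop:watson}) together with the factorization \eqref{eqn:factor-triple-product} to obtain
\[
  \Bigl|\text{LHS of }\eqref{eqn:hque-conj}\Bigr|^2
  = c_1\,\frac{\Lambda(\ad \pi \times \sigma,1/2)\,\Lambda(\sigma,1/2)}{\Lambda(\ad \pi,1)^2},
\]
for a constant $c_1 \geq 0$ depending only on $\Psi$. Next, I apply Proposition \ref{prop:twisted-watson} (whose positivity conclusion uses precisely the local conditions on $D$ from Proposition \ref{prop:friedberg-hoffstein}) together with Lemma \ref{lem:norm-varphi-D}, the factorization \eqref{eqn:factor-twisted-asai} of the twisted Asai $L$-function, the analogous factorization $L(\ad \pi_D,1) = L(\ad \pi,1)\,L(\ad \pi \otimes \chi_D,1)$, and the archimedean identity \eqref{eqn:L-infinity-ad-pi-D}. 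After cancelling the factors $L(\ad \pi,1)\,L(\ad \pi \otimes \chi_D,1)$ between the numerator ($\|\varphi_D\|^2$) and the denominator ($\Lambda(\ad \pi_D,1)$), I obtain
\[
  |\textstyle\int \res(\varphi_D) \Psi '|^2
  = c_2\,\frac{\Lambda(\ad \pi \times \sigma,1/2)\,\Lambda(\sigma \otimes \chi_D,1/2)\,L(\ad \pi,1)^2}{\Lambda(\ad \pi,1)^2}
\]
for a positive constant $c_2$ depending only on $B, D, \sigma, \Psi '$ and measure normalizations.

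Dividing one identity by the other, all $k$-dependence cancels:
\[
  \frac{|\text{LHS of }\eqref{eqn:hque-conj}|^2}{|\int \res(\varphi_D) \Psi '|^2 / L(\ad \pi,1)^2}
  = \frac{c_1\,\Lambda(\sigma,1/2)}{c_2\,\Lambda(\sigma \otimes \chi_D,1/2)},
\]
which is a fixed finite constant (the denominator on the right is nonzero by condition (i) of Proposition \ref{prop:friedberg-hoffstein}). Hypothesis \eqref{eqn:goalpost} thus forces the LHS of \eqref{eqn:hque-conj} to tend to zero, completing the proof.

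The potential obstacle here is purely bookkeeping: making sure that every archimedean factor on the right hand sides of Watson's and Ichino's formulas accounts correctly, so that when one takes the ratio of the two identities, the archimedean gamma factors cancel cleanly and no $k$-dependence is left over. This is a routine, if tedious, manipulation using \eqref{eqn:L-infinity-ad-pi-D} and the fact that $\sigma$, $\chi_D$, $D$ and the choices of test vectors $\Psi '$, $\varphi_D$ at the fixed places are all independent of $k$. No deeper analytic input is needed beyond the results already established in \S\ref{sec:notat-prel-reduct}--\S\ref{sec:twist-triple-prod}.
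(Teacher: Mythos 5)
Your proposal is correct and is essentially the paper's own argument: Watson's formula plus the comparison of the factorizations \eqref{eqn:factor-triple-product} and \eqref{eqn:factor-twisted-asai}, the nonvanishing of $L(\sigma\otimes\chi_D,1/2)$, and then Ichino's formula \eqref{eqn:ichino-twisted-specialized} run in reverse together with \eqref{eqn:norm-varphi-D} and \eqref{eqn:L-infinity-ad-pi-D}; you merely package the chain of reductions as an explicit ratio identity. The only cosmetic caveat is that the constants in Proposition \ref{prop:twisted-watson} range over a finite set (and so may vary with $k$ through the local components at $p\in\ram(B)$), but since all admissible values are positive this only changes ``constant'' to ``bounded above and below,'' which is all the deduction needs.
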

\begin{proof}
  By Watson's triple product formula (Proposition \ref{prop:watson}), we reduce
  to estimating the ratio \eqref{eqn:watson} involving triple
  product $L$-functions.
  By comparing the factorizations
  \eqref{eqn:factor-twisted-asai} and
  \eqref{eqn:factor-triple-product},
  we see that
  \begin{equation}\label{eqn:relate-triple-to-twistd}
    \Lambda(\pi \times \pi  \times \sigma,s)
    =
    \frac{\Lambda(\sigma,s)}{\Lambda(\sigma \otimes \chi_D, s) }
    \Lambda(\asai(\pi_D) \times \sigma,s).
  \end{equation}
  By \eqref{eqn:relate-triple-to-twistd}
  and the
  nonvanishing of
  $L(\sigma \otimes \chi_D,1/2)$,
  we reduce to estimating the ratio
  on the RHS of \eqref{eqn:ichino-twisted-specialized}
  involving twisted triple product $L$-functions.
  By applying the twisted triple product formula
  \eqref{eqn:ichino-twisted-specialized}
  in reverse,
  together with the formulas
  \eqref{eqn:norm-varphi-D} for $\|\varphi_D\|^2$ and
  \eqref{eqn:L-infinity-ad-pi-D} for $L_\infty(\ad(\pi_D),1)^2$,
  we reduce to estimating the LHS of \eqref{eqn:goalpost}.
\end{proof}

\begin{remark}
\label{rmk:summary-thus-far}
Let us summarize what we have done thus far.  We
started with automorphic representations $\pi^B$ and $\sigma^B$
of the non-split group $\PB^\times(\mathbb{A})$.  Our aim was to
estimate certain $\PB^\times(\mathbb{A})$-invariant period
integrals defined on $\pi^B \otimes \pi^B \otimes \sigma^B$ in
terms of other period integrals taken over split quotients.  The
triple product period on $\PGL_2$ was not a suitable candidate
because, in view of Prasad's uniqueness theorem \cite[Thm
1.2]{MR1059954}, $\pi \otimes \pi \otimes \sigma$ is not
\emph{locally distinguished}: it does not admit a nonzero
$\PGL_2(\mathbb{A})$-invariant trilinear functional.  The
twisted triple product periods for $\pi_D \otimes \sigma$ were
natural candidates: the corresponding $L$-values share a degree
six Euler factor with those attached to our original periods,
and the freedom to vary $D$ allowed us to arrange that
$\pi_D \otimes \sigma$ be locally distinguished.
The local distinction of $\pi_D \otimes \sigma$ entered into our
analysis implicitly, through the proof of Proposition
\ref{prop:twisted-watson}; it may be verified alternatively
using the $\eps$-factor criterion of \cite[Thm D]{MR1198305}
(see also \cite[\S5]{MR2369492}, \cite{MR2438071}) and explicit
calculations as in the proof of Proposition
\ref{prop:friedberg-hoffstein}.\footnote{
  We sketch the details here.
The cited criterion says that $\pi_D \otimes \sigma$ is locally
distinguished if and only if for each place $p$ of $\mathbb{Q}$,
one has $\eps_p(\pi_D \otimes \sigma) = (\chi_D)_p(-1)$;
moreover, $\pi_D \otimes \sigma$ fails to be locally
distinguished at $p$ only if the local components of $\pi_D$ and
$\sigma$ belong to the discrete series.  At $p = \infty$, the
local component of $\sigma$ belongs to the principal series, so
local distinction holds.  At a finite prime $p \notin \ram(B)$,
the local components of $\pi$ and $\sigma$ are unramified, and
we compute using \cite[Prop 3.1.1]{Sch02} that
$\eps_p(\pi_D \otimes \sigma) = \eps_p(\sigma \otimes \chi_D) =
(\chi_D)_p(-1)$.  At a finite prime $p \in \ram(B)$, the local
component of $\chi_D$ is the unramified quadratic character,
thus $(\chi_D)_p(-1) = 1$; using that $\pi_p$ is an unramified
twist of Steinberg and that $\sigma_p$ is (untwisted) Steinberg,
we compute
as in the
proof of Proposition
\ref{prop:twisted-watson}
that
$\eps_p(\pi_D \otimes \sigma) = \eps_p(\sigma \otimes \chi_D) =
1$.}
\end{remark}

\section{Approximate functional equation for
  periods}\label{sec:appr-funct-equat}
Let $D$ be a positive nontrivial fundamental discriminant.
We aim to
evaluate
the integrals
$\int \res(\varphi_D) \Psi '$
in terms of the Fourier coefficients of $\varphi_D$ and $\Psi'$.

It will be convenient first to rewrite those integrals
classically.  The function
$\res(\varphi_D) \Psi ' : \PGL_2(\mathbb{Q}) \backslash
\PGL_2(\mathbb{A}) \rightarrow \mathbb{C}$ is right-invariant
under the action of $\SO(2)$ at the infinite place and, for each
prime $p$, the action of the unit group of
the order
$\begin{pmatrix}
  \mathbb{Z}_p & \mathbb{Z}_p \\
  d_B \mathbb{Z}_p & \mathbb{Z}_p
\end{pmatrix}$.  By strong approximation as in \cite[Lem
B.1]{MR3356036}, we deduce that with suitable
normalization of Haar measure,
\begin{equation}\label{eqn:}
  \int  \res(\varphi_D) \Psi'
  =
  \int_{z \in \Gamma_0(d_B) \backslash \mathbb{H}}
  \varphi_D(z,z)
  \Psi '(z) \, \frac{d x \, d y}{y^2}.
\end{equation}

We would like to evaluate (or at least estimate) such integrals
in terms of the Fourier coefficients of $\varphi_D$ and of
$\Psi '$.  To address this problem, we might be tempted to apply
Holowinsky's ``$Y$-thickening technique'' \cite[\S3.1]{MR2680498}.
Unfortunately, to apply that technique effectively here seems to
require more \emph{a priori} control over $\res(\varphi_D)$ than
is available.  For instance, to estimate the analogue of the
quantities ``$R_\varphi(Y)$'' considered in \cite[Lem
3.1a]{MR2680498} seems to require a sharp bound for the
$L^1$-norm of $\res(\varphi_D)$, which seems difficult to
achieve.  By contrast, for the split analogue of our discussion
($D$ is a square,
$\mathbb{Q}(\sqrt{D}) = \mathbb{Q} \times \mathbb{Q}$, and
$\varphi_D = \varphi \otimes \bar{\varphi}$), the $L^1$-norm of
$\res(\varphi_D)$ is simply the squared $L^2$-norm of $\varphi$,
which we may normalize to be $1$.  We instead appeal to the
following ``approximate functional equation'' for integrals of
automorphic forms.
\begin{proposition}\label{prop:AFE}
  Let $\Gamma$ be a finite index subgroup of $\SL_2(\mathbb{Z})$
  with $-1 \in \Gamma$.
  Let $\phi : \Gamma \backslash \mathbb{H} \rightarrow
  \mathbb{C}$
  be a bounded continuous function
  satisfying $|\phi(\tau z)| \leq C y^{-\alpha}$
  for some fixed $C, \alpha > 0$,
  all $z = x + i y$ with $y \geq 1$,
  and all $\tau \in \SL_2(\mathbb{Z})$.
  Set
  \begin{equation}\label{eqn:}
    a_0(y) :=
    \int_{x \in \mathbb{R}/\mathbb{Z}}
    \sum _{\tau \in \Gamma \backslash
      \SL_2(\mathbb{Z})}
    \phi (\tau (x + i y))
    \, d x
  \end{equation}
  and, for $\Re(s) > 1$,
  \begin{equation}\label{eqn:}
    \tilde{a}_0(s)
    =
    \int_{y \in \mathbb{R}^\times_+} a_0(y)  y^{s} \, \frac{d y}{y^2}.
  \end{equation}
  Then for $\delta > 0$,
  \begin{equation}\label{eqn:approx-func-eqn-for-periods}
    \int_{\Gamma \backslash \mathbb{H}}
    \phi(z) \, \frac{d x \, d y}{y^2}
    =
    \int_{\Re(s) = 1+\delta}
    (2 s - 1) 2 \xi(2 s)
    \tilde{a}_0(s)
    \, \frac{d s}{2 \pi i},
  \end{equation}
  where $\xi(s) := \Gamma_{\mathbb{R}}(s) \zeta(s)$,
  $\Gamma_{\mathbb{R}}(s) = \pi^{-s/2} \Gamma(s/2)$
  denotes the completed Riemann zeta function.
\end{proposition}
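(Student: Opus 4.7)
The plan is to recognize \eqref{eqn:approx-func-eqn-for-periods} as a Rankin--Selberg/Mellin-inversion identity involving the completed real-analytic Eisenstein series on $\SL_2(\mathbb{Z}) \backslash \mathbb{H}$. First I would push $\phi$ forward along the finite covering $\Gamma \backslash \mathbb{H} \to \SL_2(\mathbb{Z}) \backslash \mathbb{H}$ by setting
\[
\Phi(z) := \sum_{\tau \in \Gamma \backslash \SL_2(\mathbb{Z})} \phi(\tau z),
\]
a finite $\SL_2(\mathbb{Z})$-invariant sum for which the elementary unfolding identity gives $\int_{\Gamma \backslash \mathbb{H}} \phi \, \frac{d x \, d y}{y^2} = \int_{\SL_2(\mathbb{Z}) \backslash \mathbb{H}} \Phi \, \frac{d x \, d y}{y^2}$. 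The hypothesis $|\phi(\tau z)| \leq C y^{-\alpha}$ forces $|\Phi(z)| \ll y^{-\alpha}$ for $z = x + i y$ with $y \geq 1$, so $\Phi$ decays at the cusp $\infty$ of $\SL_2(\mathbb{Z}) \backslash \mathbb{H}$, and $a_0(y) = \int_0^1 \Phi(x + i y) \, d x$ is precisely the zeroth Fourier coefficient of $\Phi$ at that cusp.

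Next I introduce the completed Eisenstein series
\[
E^*(z, s) := \xi(2 s) \sum_{\gamma \in \Gamma_\infty \backslash \SL_2(\mathbb{Z})} \Im(\gamma z)^s,
\]
which admits meromorphic continuation to $\mathbb{C}$, satisfies the functional equation $E^*(z, s) = E^*(z, 1 - s)$, and has only simple poles at $s = 0$ and $s = 1$ with residues $-\tfrac{1}{2}$ and $+\tfrac{1}{2}$, respectively. For $\Re(s) > 1$, Rankin--Selberg unfolding of the pairing of $\Phi$ against $E^*(\cdot, s)$ yields
\[
I(s) := \int_{\SL_2(\mathbb{Z}) \backslash \mathbb{H}} \Phi(z) \, E^*(z, s) \, \frac{d x \, d y}{y^2} = \xi(2 s) \, \tilde{a}_0(s),
\]
with absolute convergence on both sides ensured by the cuspidal decay of $\Phi$. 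The integrand on the right-hand side of \eqref{eqn:approx-func-eqn-for-periods} is therefore $g(s) := 2 (2 s - 1) I(s)$. From the properties of $E^*$, the function $I(s)$ continues meromorphically, satisfies $I(s) = I(1 - s)$, and has simple poles only at $s = 0, 1$, with residues $\mp \tfrac{1}{2} \int_{\Gamma \backslash \mathbb{H}} \phi \, \frac{d x \, d y}{y^2}$. Consequently $g(1 - s) = -g(s)$, and its residues at both $s = 0$ and $s = 1$ equal $\int_{\Gamma \backslash \mathbb{H}} \phi \, \frac{d x \, d y}{y^2}$ (the two sign changes cancel).

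The identity \eqref{eqn:approx-func-eqn-for-periods} now follows from a symmetric contour shift. Moving the contour from $\Re(s) = 1 + \delta$ to $\Re(s) = -\delta$ and applying the residue theorem yields
\[
\int_{\Re(s) = 1 + \delta} g(s) \, \frac{d s}{2 \pi i} - \int_{\Re(s) = -\delta} g(s) \, \frac{d s}{2 \pi i} = 2 \int_{\Gamma \backslash \mathbb{H}} \phi \, \frac{d x \, d y}{y^2},
\]
while the substitution $s \mapsto 1 - s$ combined with $g(1 - s) = -g(s)$ identifies the second integral with the negative of the first. Dividing by $2$ completes the proof. The main --- but routine --- obstacle is justifying the contour shift, which requires control of $g(s)$ on horizontal segments as $|\Im s| \to \infty$ in the strip $-\delta \leq \Re s \leq 1 + \delta$. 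This follows from the Stirling-type exponential decay of the Gamma factor in $\xi(2 s)$, which dominates the polynomial-in-$|\Im s|$ growth of $I(s)$ furnished by the Fourier expansion of $E^*$ together with the cuspidal decay of $\Phi$.
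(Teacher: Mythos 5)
Your argument is correct, but it is worth noting that it is not the paper's argument: the paper disposes of Proposition \ref{prop:AFE} in one line, by quoting it as the special case $H(s)=s$ of \cite[Thm 5.6]{MR3356036}, whereas you supply a self-contained proof via the standard Rankin--Selberg mechanism that presumably underlies that citation. Your bookkeeping checks out: pushing $\phi$ forward to $\Phi$ on $\SL_2(\mathbb{Z})\backslash\mathbb{H}$, unfolding against $E^*(z,s)$ gives $I(s)=\xi(2s)\tilde a_0(s)$; $E^*$ has only the simple poles at $s=1,0$ with residues $\pm\tfrac12$, so $g(s)=2(2s-1)I(s)$ is odd under $s\mapsto 1-s$ and has residue $\int_{\Gamma\backslash\mathbb{H}}\phi\,\frac{dx\,dy}{y^2}$ at both $s=0$ and $s=1$ (the factor $2s-1$ also conveniently cancels the pole of $\xi(2s)$ at $s=\tfrac12$), and the symmetric contour shift yields exactly \eqref{eqn:approx-func-eqn-for-periods}. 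Two points deserve to be made explicit. First, the hypothesis $-1\in\Gamma$ --- precisely the ``correction'' the paper flags when citing \cite{MR3356036} --- is what makes your opening identity $\int_{\Gamma\backslash\mathbb{H}}\phi=\int_{\SL_2(\mathbb{Z})\backslash\mathbb{H}}\Phi$ valid as stated; if $-1\notin\Gamma$, the coset sum defining $\Phi$ double-counts relative to the action on $\mathbb{H}$ and the identity acquires a factor $2$, so you should say where this enters. Second, since $\Phi$ is only assumed to satisfy $\Phi\ll\min(1,y^{-\alpha})$ (it is not cuspidal, merely decaying), the unfolded integral and hence the identity $I(s)=\xi(2s)\tilde a_0(s)$ converge absolutely only in $1<\Re(s)<1+\alpha$, and the pairing $I(s)$ only in $-\alpha<\Re(s)<1+\alpha$; thus your contour shift literally requires $0<\delta<\alpha$ (which is how the statement must be read anyway, or else $\tilde a_0$ on the contour is interpreted by continuation as $I(s)/\xi(2s)$). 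The horizontal-segment estimates you defer are indeed routine, since $E^*(z,\sigma+it)$ decays exponentially in $|t|$ uniformly for $z$ in a fundamental domain up to polynomial growth in $y$. The trade-off between the two routes is the expected one: the citation keeps the paper short and inherits the more general kernel $H$, while your direct argument makes the proposition independent of \cite{MR3356036} and makes visible exactly where the hypotheses ($-1\in\Gamma$, the $y^{-\alpha}$ decay) are used.
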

\begin{proof}
  This is the special case $H(s) = s$ of \cite[Thm
  5.6]{MR3356036} (corrected by requiring that
  $-1 \in \Gamma$).
\end{proof}
\begin{remark}
  We refer to \cite[\S5]{MR3356036} for some discussion
  (motivated by numerical applications, but relevant for
  analytic ones) of the relationship between ``$Y$-thickening''
  and Proposition \ref{prop:AFE}, and to
  \cite{2018arXiv180209740C} and \cite[\S4]{MR3889557} for
  further applications.
\end{remark}

We apply this result to $\Gamma = \Gamma_0(d_B)$ and
$\phi(z) := \varphi_D(z,z) {\Psi '(z)}$.
The main point in evaluating
$a_0(y)$
is then the calculation
\begin{equation}\label{eqn:expand-constant-term-phi}
  \int_{x \in \mathbb{R}/\mathbb{Z}} \phi(x + i y) \, d x
  =
  \sum _{
    \substack{
      0 \neq \ell \in \mathbb{Z}, \\
      0 \neq m \in \mathfrak{d}^{-1} : \\
      \tr(m)
      = \ell
    }
  }
  \frac{\rho(|\ell|)}{|\ell|^{1/2}}
  \frac{\lambda_{D}(m \mathfrak{d})}{
    \mathcal{N} (m \mathfrak{d})^{1/2}
  }
  W_{\Psi}(\ell y)
  W_k(m_1 y)
  W_k(-m_2 y),
\end{equation}
which follows by opening the Fourier series
\eqref{eqn:varphi-D-fourier}, \eqref{eqn:Psi-prime-fourier} and
using that $\Psi'$ is real-valued.
By combining this with
similar calculations at the other cusps of $\Gamma_0(d_B)$, we
will verify the following.
\begin{proposition}
  We have
  \begin{equation}\label{eqn:twisted-period-via-sums-1}
    \int \res(\varphi_D) \Psi '
    =
    \sum _{
      \substack{
        0 \neq \ell \in \mathbb{Z}, \\
        0 \neq m \in \mathfrak{d}^{-1} : \\
        \tr(m)
        = \ell
      }
    }
    \frac{\rho(|\ell|)}{|\ell|^{1/2}}
    \frac{\lambda_{D}(m \mathfrak{d})}{
      \mathcal{N} (m \mathfrak{d})^{1/2}
    }
    V_k(\ell, m)
  \end{equation}
  where
  \begin{equation}\label{eqn:}
    V_k(\ell,m)
    :=
    \int_{y \in \mathbb{R}^\times_+}
    h(y)
    W_{\Psi}(\ell y)
    W_k(m_1 y)
    W_k(-m_2 y)
    \, \frac{d y}{y^2}
  \end{equation}
  with
  $h \in C^\infty(\mathbb{R}^\times_+)$
  defined by the rapidly-convergent
  Mellin integral
  \begin{equation}\label{eqn:h-of-y-defn-via-contour-integral}
    h(y) :=
    \int _{\Re(s) = 1+\delta } (2 s- 1) 2 \xi(2 s)
    \left(\sum _{d | d_B } d^s\right)
    y^s \, \frac{d s}{2 \pi i }.
  \end{equation}
\end{proposition}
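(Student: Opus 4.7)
The plan is to apply Proposition~\ref{prop:AFE} with $\Gamma = \Gamma_0(d_B)$ and $\phi(z) := \varphi_D(z,z)\Psi'(z)$. The decay hypothesis $|\phi(\tau z)| \ll y^{-\alpha}$ at every cusp of $\Gamma_0(d_B)$ follows from the exponential decay of the Whittaker functions $W_k$ and $W_\Psi$ appearing in the Fourier expansions \eqref{eqn:varphi-D-fourier} and \eqref{eqn:Psi-prime-fourier}. The task then reduces to computing $\tilde{a}_0(s)$ and matching the resulting contour integral with the Mellin inverse $h(y)$.

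To compute $a_0(y) = \int_0^1 \sum_\tau \phi(\tau(x+iy))\,dx$, I would group the cosets $\tau \in \Gamma_0(d_B)\backslash\SL_2(\mathbb{Z})$ by the cusp of $\Gamma_0(d_B)$ that $\tau$ sends $\infty$ to. Since $d_B$ is squarefree, the cusps are in bijection with the divisors $d \mid d_B$, with widths ranging over the same set of divisors. Fix a cusp $\mathfrak{c}$ of width $w$ and a scaling matrix $\sigma_\mathfrak{c} \in \SL_2(\mathbb{Z})$ with $\sigma_\mathfrak{c}\infty = \mathfrak{c}$; the cosets mapping to $\mathfrak{c}$ may be parametrized as $\sigma_\mathfrak{c}T^j$ for $j = 0, \dotsc, w-1$, so integration over $x \in [0,1]$ collapses the corresponding subsum to $\int_0^w \phi(\sigma_\mathfrak{c}(u+iy))\,du$. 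The Atkin--Lehner involution $W_\mathfrak{c} \in \PGL_2(\mathbb{Q})$ of determinant $w$ normalizing $\Gamma_0(d_B)$ can be chosen so that $W_\mathfrak{c}((u+iy)/w) = \sigma_\mathfrak{c}(u+iy)$ as fractional linear transformations; under the Atkin--Lehner invariance $\phi\circ W_\mathfrak{c} = \phi$ (discussed below), this yields the identity $\phi(\sigma_\mathfrak{c}(u+iy)) = \phi((u+iy)/w)$. The $\mathfrak{c}$-contribution therefore equals $w\cdot a_\infty(y/w)$, where $a_\infty(y) := \int_0^1\phi(x+iy)\,dx$ is given by \eqref{eqn:expand-constant-term-phi}. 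Taking Mellin transforms and summing over cusps yields
\[
  \tilde{a}_0(s) = \Bigl(\sum_{d\mid d_B} d^s\Bigr)\tilde{a}_\infty(s).
\]

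The Atkin--Lehner invariance $\phi\circ W_\mathfrak{c} = \phi$ requires that for each prime $p \mid w$, the signs of $\Psi'$ and $\res(\varphi_D)$ under $W_p$ multiply to $+1$. The sign for $\Psi'$ at $p \in \ram(B)$ is forced by the evenness condition \eqref{eqn:Psi-tot-even}; the sign for $\res(\varphi_D)$ is inherited from the Hilbert form $\varphi_D$ at the unique prime $\mathfrak{p}$ above $p$ (inert in $\mathcal{O}_D$ by our choice of $D$), where $(\pi_D)_\mathfrak{p}$ is the untwisted Steinberg of $\PGL_2(\mathbb{Q}(\sqrt{D})_\mathfrak{p})$ by the computation in the proof of Proposition~\ref{prop:twisted-watson}. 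The matching of the signs is precisely the local compatibility that underlies the nonvanishing of $c$ in that proposition.

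Finally, substituting the formula for $\tilde{a}_0(s)$ into \eqref{eqn:approx-func-eqn-for-periods}, expanding $a_\infty(y)$ via \eqref{eqn:expand-constant-term-phi}, and interchanging the rapidly convergent Mellin contour with the sum over $(\ell,m)$ identifies the inner integral as $V_k(\ell,m)$, with $h(y)$ as in \eqref{eqn:h-of-y-defn-via-contour-integral}, yielding \eqref{eqn:twisted-period-via-sums-1}. The rapid convergence of $h(y)$ needed to justify the interchange comes from the exponential decay of $\xi(2s)$ off the critical strip, obtained by shifting the contour. The main obstacle is the Atkin--Lehner sign-matching at primes $p \in \ram(B)$: one must track the local signs from both factors and verify that they combine to $+1$, which is the same arithmetic input that drove the nonvanishing in Proposition~\ref{prop:twisted-watson}.
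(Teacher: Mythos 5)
Your proposal follows essentially the same route as the paper's proof: apply Proposition \ref{prop:AFE} to $\phi(z)=\varphi_D(z,z)\Psi'(z)$, compute $a_0(y)$ by relating each block of cosets indexed by the divisors of $d_B$ to the expansion at $\infty$ via Atkin--Lehner elements, whose two Steinberg eigenvalues $-1$ (from $\Psi'$ and from $\varphi_D$, the latter untwisted Steinberg at the inert primes in $\ram(B)$) cancel, giving $\tilde a_0(s)=\bigl(\sum_{d\mid d_B}d^s\bigr)\tilde a_\infty(s)$, and then unfold the Fourier expansions and identify the Mellin kernel $h$. The one step the paper treats more carefully than your sketch is the interchange of the $s$-contour with the sum over $(\ell,m)$: the triple sum/integral is \emph{not} absolutely convergent on $\Re(s)=1+\delta$ (the $m$-sum is of size $y^{-2}$ for small $y$), so one first shifts to $\Re(s)=c$ with $c$ large to justify the rearrangement and only afterwards shifts back to $\Re(s)=1+\delta$ to recover $h$.
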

\begin{proof}
  For
  the quotient
  $\Gamma_0(d_B) \backslash \SL_2(\mathbb{Z})$,
  we take the coset representatives
  $w(d) n(j)$,
  where $d$ traverses the set of positive divisors of $d_B$,
  $j$ runs over $\mathbb{Z}/d$,
  and $w(d), n(j) \in \SL_2(\mathbb{Z})$
  are described by
  \begin{equation}\label{eqn:}
    w(d) \equiv \begin{pmatrix}
      0 & 1 \\
      -1 & 0
    \end{pmatrix}
    \mod{d},
    \quad 
    w(d) \equiv \begin{pmatrix}
      1 & 0 \\
      0 & 1
    \end{pmatrix}
    \mod{d_B/d},
    \quad
    n(j)
    :=
    \begin{pmatrix}
      1 & j \\
      0 & 1
    \end{pmatrix}.
  \end{equation}
  Let $p$ be a prime divisor of $d_B$.  Then $p$ remains prime
  in $\mathcal{O}_D$, and the local components of $\pi_D$ and
  $\sigma$ at $p$ are Steinberg representations, hence have
  local Atkin--Lehner eigenvalue $-1$ (see \cite[Prop
  3.1.2]{Sch02}).
  It follows that
  \begin{equation}\label{eqn:}
    \varphi_D(w(d) z, w(d) z)
    = \mu(d) \varphi_D(z/d, z/d),
    \quad 
    \Psi '(w(d) z) = \mu(d) \Psi '(z/d),
  \end{equation}
  where $\mu$ denotes the M{\"o}bius function.
  Using that $\mu(d)^2 = 1$
  and $n(j)(x + i y) = x + j + i y$,  we deduce that
  \begin{equation}\label{eqn:}
    a_0(y)
    =
    \sum_{d|d_B}
    \int_{x \in \mathbb{R}/\mathbb{Z}}
    \sum_{j \in \mathbb{Z}/d}
    \phi
    \left(\frac{x + j + i y }{d} \right) \, d x
    =
    \sum_{d|d_B}
    d
    \int_{x \in \mathbb{R}/\mathbb{Z}}
    \phi \left(x + \frac{ i y }{d}\right) \, d x,
  \end{equation}
  thus
  \begin{equation}\label{eqn:}
    \tilde{a}_0(s)
    =
    \left(\sum _{d | d_B}
    d^s \right)
    \int_{y \in \mathbb{R}^\times}
    \left(
    \int_{x \in \mathbb{R}/\mathbb{Z}}
    \phi(x + i y) \, d x \right)
    y^s
    \, \frac{d y}{y^2}.
  \end{equation}
  We evaluate the inner integral over
  $x$ as in \eqref{eqn:expand-constant-term-phi}
  and insert the resulting formula
  for $\tilde{a}_0(s)$
  into \eqref{eqn:approx-func-eqn-for-periods},
  giving the formula
  \begin{equation}\label{eq:iterated-sum-int}
    \int \res (\varphi_D) \Psi ' =
    \int_{\Re(s) = 1 + \delta}
    \int_{y \in \mathbb{R}^\times}
    \sum_{m \in \mathfrak{d}^{-1}}
    \mathcal{R}(s)
    \mathcal{T}(m,y)
    y^s
    \, \frac{d y}{y^2}
    \, \frac{d s}{ 2 \pi  i},
  \end{equation}
  where
  \[
    \mathcal{R}(s) := (2 s - 1) 2 \xi(2 s) \left( \sum _{d |
        d_B} d^s \right),
  \]
  \[
    \mathcal{T}(m,y) := 1_{ m \neq 0} 1_{ \ell := \tr(m) \neq 0}
    \frac{\rho(|\ell|)}{|\ell|^{1/2}} \frac{\lambda_{D}(m
      \mathfrak{d})}{ \mathcal{N} (m \mathfrak{d})^{1/2} }
    W_{\Psi}(\ell y) W_k(m_1 y) W_k(-m_2 y).
  \]
  We may shift the contour to $\Re(s) = c$ with $c$ large enough
  but fixed.  We observe the following estimates:
  \begin{itemize}
  \item $\mathcal{R}(s) \ll_{c,A} (1 + |s|)^{-A}$ for each fixed
    $A$, due to the rapid decay coming from the $\Gamma$-factor
    in $\xi(2 s)$.
  \item $|y^s| \leq y^c$.
  \item
    $\mathcal{T}(m,y) \ll_{A,\eps} (1 + |\ell y|)^{-A} (1 + |m_1
    y|)^{-A} (1 + |m_2 y|)^{-A}$ by the Hecke bound for the Hecke
    eigenvalues, trivial bounds for the Whittaker functions near
    zero, and the rapid decay of the Whittaker functions near
    infinity.
  \item     $\sum_{m \in \mathfrak{d}^{-1}} |\mathcal{T}(m,y)|
    \ll_A y^{-2} + y^{-A}$ by the previous estimate
    and the bound $\O(X^2)$
    for the number of $m \in \mathfrak{d}^{-1}$
    with $|m_1|, |m_2| \leq X$.
  \end{itemize}
  Using these estimates,
  we deduce that the three-fold iterated sum/integral on the
  RHS of \eqref{eq:iterated-sum-int} converges absolutely.  We
  may thus rearrange it as $\sum_{m} \int_y \int_s (\dotsb)$.
  Shifting the contour back to $\Re(s) = 1 + \delta$ then yields
  the required formula.
\end{proof}

\section{Asymptotics of archimedean
  integrals}\label{sec:asympt-arch-integr}
We retain the setting of \S\ref{sec:appr-funct-equat}.
Let $\ell$ be a nonzero integer,
and let $m \in \mathfrak{d}^{-1}$
with $\trace(m) = \ell$.
Since
$\mathfrak{d}^{-1}$ is the fractional $\mathcal{O}_D$-ideal
generated by $1/\sqrt{D}$,
we may write
\begin{equation}\label{eqn:}
  m = \frac{\ell + n / \sqrt{D}}{2}  
\end{equation}
for some $n$ satisfying
\eqref{eqn:conditions-for-O-D}.
In this way, we may view
the RHS of \eqref{eqn:twisted-period-via-sums-1}
as a sum over integers $\ell$ and $n$,
with $\ell$ nonzero,
satisfying \eqref{eqn:conditions-for-O-D}.
Since $\infty_1$ is the standard embedding
$\mathbb{Q}(\sqrt{D}) \hookrightarrow \mathbb{R}$
with respect to which $\sqrt{D}$ is positive,
we may assume that $n$ is positive,
since otherwise $V_k(\ell,m)$
vanishes due to the support condition on $W_k$.

We turn now to estimates.  The following conventions concerning
asymptotic notation and terminology will be in effect for the
remainder of the paper.  We say that a quantity is \emph{fixed}
if it is independent of our sequence parameter $k$.
For instance, the eigenform $\Psi$ is fixed.
We let
$\eps > 0$ and $N \in \mathbb{Z}_{\geq 0}$ denote fixed
quantities, with $\eps$ sufficiently small and $N$ sufficiently
large.  We use the notation $A = \O(B)$ or $A \ll B$ to denote
that $|A| \leq C |B|$ for some fixed $C \geq 0$, which we allow
to depend upon any previously mentioned fixed quantities.  In
particular, such implied constants $C$ may depend upon the fixed
quantities $\eps, N$ and $\Psi$.

\begin{proposition}\label{prop:asympt-arch-integr}
  We have
  \begin{equation}\label{eqn:sum-just-after-truncation}
  \int \res(\varphi_D) \Psi '
  =
  \sum _{
    \ell,n : |\ell| < k^\eps
  }^\sharp
  \lambda_{D} \left(\frac{n + \ell \sqrt{D}}{2} \right)
  \frac{f_\ell (n/k)}{k} + \O(k^{-1+\eps}),
\end{equation}
where
\begin{itemize}
\item  the symbol $\sharp$
indicates that $\ell$ and $n$ are integers, with $\ell$ nonzero and $n$
positive,
satisfying
the congruence condition
\eqref{eqn:conditions-for-O-D},
and
\item
  the $f_\ell$ are
  smooth functions on $\mathbb{R}^\times_+$
  satisfying, for $\mathcal{S}_N$ as in
  Definition \ref{defn:sobolev-norms}, the estimates
  \begin{equation}\label{eqn:sobolev-estimates-f-ell}
  \mathcal{S}_N(f_\ell) \ll |\ell|^{-N}.
\end{equation}
\end{itemize}
\end{proposition}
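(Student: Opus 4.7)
The plan is to start from \eqref{eqn:twisted-period-via-sums-1}, reparametrize the ideal index by writing $m = \mu/\sqrt{D}$ with $\mu = (n + \ell\sqrt{D})/2 \in \mathcal{O}_D$ (so $\tr(m) = \ell$, $\mathcal{N}(m\mathfrak{d}) = |n^2 - D\ell^2|/4$, and $\lambda_D(m\mathfrak{d}) = \lambda_D(\mu)$), and then to analyze the archimedean integral $V_k(\ell, m)$ via a saddle-point computation. The support of $W_k$ forces $n > |\ell|\sqrt{D} > 0$. Using the explicit formula \eqref{eqn:defn-W-k} together with the identities $m_1(-m_2) = (n^2-D\ell^2)/(4D)$ and $m_1 + (-m_2) = n/\sqrt{D}$, one finds
\[
W_k(m_1 y)\, W_k(-m_2 y) = \Gamma(k)^{-1}\, (2\pi n y/\sqrt{D})^k\, (1 - D\ell^2/n^2)^{k/2}\, e^{-2\pi n y/\sqrt{D}},
\]
and substituting this into $V_k$ and changing variables $u := 2\pi n y/\sqrt{D}$ yields
\[
V_k(\ell,m) = \frac{2\pi n}{\sqrt{D}}\, \Gamma(k)^{-1}\, (1 - D\ell^2/n^2)^{k/2}\int_0^\infty G_\ell(u/n)\, u^{k-2} e^{-u}\, du,
\]
where $G_\ell(s) := h\bigl(s\sqrt{D}/(2\pi)\bigr)\, W_\Psi\bigl(\ell s\sqrt{D}/(2\pi)\bigr)$.

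Next I apply Laplace's method to the $u$-integral, whose density $u^{k-2} e^{-u}$ is sharply peaked at $u = k$ with width $\sqrt{k}$. A Taylor expansion of $G_\ell$ at $u = k$, combined with $\Gamma(k)^{-1}\int u^{k-2} e^{-u}\, du = 1/(k-1)$, yields an asymptotic of the shape
\[
V_k(\ell,m) = \frac{2\pi n}{\sqrt{D}(k-1)}(1-D\ell^2/n^2)^{k/2}\bigl[G_\ell(k/n) + \O(1/k)\bigr],
\]
each subsequent Laplace term inheriting a similar structure but gaining an additional factor of $1/k$. Writing $t = n/k$ and collecting the factors $\rho(|\ell|)/|\ell|^{1/2}$ and $\mathcal{N}(m\mathfrak{d})^{-1/2} = 2/\sqrt{n^2-D\ell^2}$ from \eqref{eqn:twisted-period-via-sums-1}, the contribution of $(\ell,n)$ to the period becomes $\lambda_D(\mu) f_\ell(t)/k$, whose leading part is
\[
f_\ell(t) = \frac{4\pi \rho(|\ell|)}{\sqrt{D}\, |\ell|^{1/2}}\, (1 - D\ell^2/(kt)^2)^{(k-1)/2}\, h\bigl(\tfrac{\sqrt{D}}{2\pi t}\bigr)\, W_\Psi\bigl(\tfrac{\ell\sqrt{D}}{2\pi t}\bigr),
\]
extended by zero for $t \leq \sqrt{D}|\ell|/k$.

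To verify $\mathcal{S}_N(f_\ell) \ll |\ell|^{-N}$ uniformly in $k$, I rely on three inputs. First, shifting the Mellin contour in \eqref{eqn:h-of-y-defn-via-contour-integral} arbitrarily far to the right encounters no poles, giving $h(y) \ll_c y^c$ for any fixed $c$, which handles the regime $t \to \infty$. Second, the Whittaker function $W_\Psi$ is an exponentially decaying $K$-Bessel function, so $W_\Psi(\ell\sqrt{D}/(2\pi t)) \ll_c (|\ell|/t)^{-c}$ for any fixed $c$, which both controls the regime $t \to 0$ and supplies the required $|\ell|^{-N}$ in the bulk (easily absorbing the polynomial factor $\rho(|\ell|)/|\ell|^{1/2} \ll |\ell|^{1/2+\eps}$). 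Third, the factor $(1-D\ell^2/(kt)^2)^{(k-1)/2}$, though it contains the large exponent $(k-1)/2$, has all iterated $(t\partial_t)$-derivatives uniformly bounded in $k$: writing $X = D\ell^2/(kt)^2$, each such derivative is a linear combination of monomials $X^j(1-X)^{(k-1)/2-j}$ with coefficients polynomial in $k$ of degree $\leq j$, and the elementary estimate $\sup_{s \geq 0} s^j e^{-s/2} < \infty$ applied to $s = kX$ makes these terms uniformly bounded. The truncation to $|\ell| < k^\eps$ is then routine: the tail $|\ell| \geq k^\eps$ contributes, via the Sobolev decay just established and the Deligne bound $|\lambda_D(\mu)| \ll_\eps k^\eps$ within the effective support $n \asymp k$, at most $k^\eps \sum_{|\ell|\geq k^\eps}|\ell|^{-N} \ll k^{-1+\eps}$ provided $N$ is taken large. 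The main obstacle, to my mind, is verifying the Laplace asymptotic with errors that remain uniform across the degenerate boundary $n \approx \sqrt{D}|\ell|$ (where the factor $(1-D\ell^2/n^2)^{k/2}$ collapses to zero and the saddle-point width distorts), and then confirming that each successive Laplace correction, after $t$-differentiation, inherits a Sobolev bound of the stated shape so as to be safely absorbed into $f_\ell$.
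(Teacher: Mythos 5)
Your overall route is the same as the paper's: start from \eqref{eqn:twisted-period-via-sums-1}, multiply out the two weight-$k$ Whittaker factors exactly (your identity for $W_k(m_1y)W_k(-m_2y)$ and the resulting expression for $V_k(\ell,m)$ agree with the paper's, with your $G_\ell$ playing the role of its $h_\ell$), evaluate the Gamma-type integral at its peak $u\approx k$, and package the outcome as $\lambda_D(\mu)f_\ell(n/k)/k$ with $\mathcal{S}_N(f_\ell)\ll|\ell|^{-N}$ coming from contour-shifting for $h$ and the decay of $W_\Psi$; the paper simply replaces the general Laplace expansion by Iwaniec's elementary second-moment lemma. Your one real variant --- keeping the factor $(1-D\ell^2/(kt)^2)^{(k-1)/2}$ inside $f_\ell$ and checking via $\sup_{s\geq 0}s^je^{-s/2}<\infty$ that its invariant derivatives are bounded uniformly in $k$ --- is correct and legitimately replaces the paper's device of first truncating to $|\ell|<k^{\eps}$, $k^{1-\eps}<n<k^{1+\eps}$ and discarding that factor as $1+\O(k^{4\eps-1})$ (the paper's $f_\ell$ is then $k$-independent and genuinely smooth; your extension by zero at $t=\sqrt{D}|\ell|/k$ is only finitely differentiable, a cosmetic blemish).

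The genuine gap is the absence of any localization of $n$ to the range $n\asymp k$, which your error term cannot survive without. The $\sharp$-sum runs over \emph{all} positive $n$ in a congruence class, and the Laplace remainder as you state it --- an $n$-uniform ``$+\O(1/k)$'' inside the bracket --- contributes per pair $(\ell,n)$ about $\frac{\rho(|\ell|)}{|\ell|^{1/2}}\cdot\frac{2|\lambda_D(\mu)|}{n}\cdot\frac{n}{k}\cdot\frac{1}{k}\asymp|\lambda_D(\mu)|k^{-2}$ (times $\ell$-factors), and the sum of this over the infinitely many admissible $n$ diverges; even the sharper remainder $\ll kn^{-2}(1+|\ell|)^{-N}$ furnished by a crude second-derivative bound (as in the paper's Lemma \ref{lem:iwaniec-incomplete-gamma-integral}) gives $\sum_{n\geq 1}kn^{-3}\asymp k$, nowhere near $\O(k^{-1+\eps})$. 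What is missing is precisely the paper's separate truncation lemma: when $n\geq k^{1+\eps}$ or $n\leq k^{1-\eps}$ the peak $u\approx k$ of $u^{k-2}e^{-u}$ is misaligned with the effective support $u\asymp n$ of $G_\ell(u/n)$, and one shows $V_k(\ell,m)\ll(kn|\ell|)^{-N}$ by bounding $h_\ell(y)\ll|\ell|^{-N}\min(y/n,n/y)^N$ and comparing $\Gamma(k\pm N)$ with $\Gamma(k)$; only after this localization is the Taylor/Laplace remainder summable (giving $\O(k^{-1+\O(\eps)})$), and only then may one speak of ``the effective support $n\asymp k$'' as you do when truncating in $\ell$. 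By contrast, the ``main obstacle'' you flag --- uniformity near the degenerate boundary $n\approx\sqrt{D}|\ell|$ --- is a red herring: the factor $(1-D\ell^2/n^2)^{k/2}$ sits outside the $u$-integral, the peak and width of $u^{k-2}e^{-u}$ do not depend on $\ell$ or $n$, and once the sum is localized to $n\asymp k$, $|\ell|<k^{\eps}$ that boundary region is empty.
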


The proof occupies
the remainder of \S\ref{sec:asympt-arch-integr}.

Set
\begin{equation}\label{eqn:}
  h_\ell(y) := h(y) W_\Psi(\ell y) / y.
\end{equation}
By substituting the definition \eqref{eqn:defn-W-k} of $W_k$ and
executing the change of variables
$y \mapsto y \sqrt{D} / 2 \pi n$, we see that
$V_k(\ell,m) = 0$ unless $n^2 - \ell^2 D > 0$,
in which case
\begin{equation}\label{eqn:V-k-nice-expr}
  V_k(\ell,m)
  =
  \frac{(1 - D \ell^2 / n^2)^{k/2}}{\Gamma(k)}
  \int_{y \in \mathbb{R}^\times_+}
  h_\ell \left(\frac{\sqrt{D}}{2 \pi n } y \right)
  y^k e^{-y}
  \, \frac{d y}{y}.
\end{equation}

We observe that the function $h_\ell(y)$ and its derivatives decay
rapidly with respect to both $y$ (tending either to $0$ or
$\infty$) and $\ell$:
\begin{lemma}
  We have
  \begin{equation}\label{eqn:sobolev-estimate-h-ell}
    \mathcal{S}_N(h_\ell) \ll |\ell|^{-N}.
  \end{equation}
\end{lemma}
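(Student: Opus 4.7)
The plan is to reduce to bounding individual terms in the Leibniz expansion of $D^j h_\ell$, and then decay estimates for $h$ and $W_\Psi$ take care of everything via a short case analysis. Since $D = y\partial_y$ commutes with multiplicative scaling (so $D_y[W_\Psi(\ell y)] = (DW_\Psi)(\ell y)$) and $D(y^{-1}) = -y^{-1}$, the product rule gives
\[
D^j h_\ell(y) = \sum_{j_1+j_2+j_3 = j} c_{j_1,j_2,j_3}\,(D^{j_1}h)(y)\,(D^{j_2}W_\Psi)(\ell y)\, y^{-1},
\]
so it suffices to bound each such term by $|\ell|^{-N}(y+1/y)^{-N}$ uniformly in $y$ for $j \leq N$.

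First, I would establish decay of $h$ by shifting the Mellin contour in \eqref{eqn:h-of-y-defn-via-contour-integral}. The integrand is meromorphic with the pole of $\zeta(2s)$ at $s = 1/2$ cancelled by the factor $(2s-1)$; the only remaining pole is the simple pole of $\Gamma_{\mathbb R}(2s)$ at $s=0$. Shifting to $\Re(s) = A$ crosses no poles and yields $|D^{j_1}h(y)| \ll_A y^A$ for $y \leq 1$. Shifting to $\Re(s) = -A$ crosses only $s = 0$; since $D^{j_1}$ inserts a factor of $s^{j_1}$ under the integral, the residue is a nonzero constant (equal to $\tau(d_B)$) when $j_1 = 0$ and vanishes when $j_1 \geq 1$. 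In either case $|D^{j_1}h(y)| \ll 1$ on $[1,\infty)$. Second, since $\Psi$ is fixed, its Whittaker function $W_\Psi(u) = c_\Psi|u|^{1/2}K_{i\nu}(2\pi|u|)$ satisfies the standard Bessel-type estimates $|(D^{j_2}W_\Psi)(u)| \ll_{j_2} |u|^{1/2}$ for $|u|\leq 1$ and $|(D^{j_2}W_\Psi)(u)| \ll_{j_2,B} |u|^{-B}$ for $|u|\geq 1$ and any fixed $B$.

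I would conclude by splitting into three regimes and applying these bounds with $A$ and $B$ chosen in terms of $N$. For $y \leq 1/|\ell|$, take $A = 2N+1$ and use $|(D^{j_2}W_\Psi)(\ell y)| \ll (|\ell|y)^{1/2}$; each Leibniz term is bounded by $|\ell|^{1/2}y^{2N+1/2}$, so multiplying by $(y+1/y)^N \leq (2/y)^N$ and using $y \leq 1/|\ell|$ yields $\ll |\ell|^{1/2}\cdot|\ell|^{-N-1/2} = |\ell|^{-N}$. For $1/|\ell|\leq y \leq 1$, again take $A = 2N+1$ but now use $|(D^{j_2}W_\Psi)(\ell y)|\ll (|\ell|y)^{-(2N+1)}$; each term is bounded by $|\ell|^{-(2N+1)}y^{-1}$, and multiplying by $(2/y)^N$ and using $y \geq 1/|\ell|$ again gives $\ll |\ell|^{-N}$. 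For $y \geq 1$, use $|D^{j_1}h(y)| \ll 1$ together with $|(D^{j_2}W_\Psi)(\ell y)| \ll (|\ell|y)^{-N}$; each term is bounded by $|\ell|^{-N}y^{-N-1}$, and multiplying by $(2y)^N$ yields $\ll |\ell|^{-N}y^{-1} \leq |\ell|^{-N}$.

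The main technical obstacle is the constant residue $\tau(d_B)$ of the $h$-integrand at $s = 0$: it prevents $h$ itself from decaying at infinity, so one cannot treat $h$ as Schwartz on $\mathbb{R}^\times_+$. This non-decay is harmless because the exponential decay of $W_\Psi(\ell y)$ more than compensates; observing this is the only nontrivial point, and the rest of the argument is routine bookkeeping of the three-case analysis.
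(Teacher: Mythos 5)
Your proposal is correct and follows essentially the same route as the paper: contour shifting in \eqref{eqn:h-of-y-defn-via-contour-integral} to get $D^{j_1}h(y)\ll y^{A}$ near $0$ and $h(y)=\tau(d_B)+\O(y^{-A})$ (with derivatives $\ll y^{-A}$) near $\infty$, combined with the decay of $W_\Psi$ and a Leibniz/case analysis, exactly as in the paper's brief argument. The only cosmetic caveat is that the bound $|(D^{j_2}W_\Psi)(u)|\ll |u|^{1/2}$ near $0$ should be weakened to $\ll |u|^{1/2-7/64}$ (or just $\ll 1$) to allow for a possibly non-tempered archimedean parameter of $\Psi$, which does not affect your estimates.
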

\begin{proof}
  By shifting contours in the definition
  \eqref{eqn:h-of-y-defn-via-contour-integral} of $h$, we see
  that $h(y) \ll y^{N}$ as $y \rightarrow 0$ and
  $h(y) = c + \O( y^{-N})$ as $y \rightarrow \infty$ for some
  fixed $c > 0$.  On the other hand, the Whittaker function
  $W_\Psi(y)$ is $\ll y^{1/2-7/64} \ll 1$ for small $y$ and decays exponentially
  for large $y$.  By these and similar estimates for
  derivatives, the required conclusion follows.
\end{proof}

Using what amounts to the rapid decay of $y^k e^{-y}$ for large
$k$ near both $0$ and $\infty$, we verify that $V_k(\ell,m)$ is
small unless $n$ is of size $k$:
\begin{lemma}
  If $n \geq k^{1+\eps}$ or $n \leq k^{1-\eps}$,
  then
  \begin{equation}\label{eqn:}
    V_k(\ell,m) \ll (k n |\ell|)^{-N}
  \end{equation}
\end{lemma}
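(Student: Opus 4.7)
The plan is to exploit the concentration of the weight $\frac{y^{k-1}e^{-y}}{\Gamma(k)}$ near $y = k$, combined with the decay of $h_\ell$ near $0$ and $\infty$ coming from the Sobolev bound \eqref{eqn:sobolev-estimate-h-ell}. Starting from the formula \eqref{eqn:V-k-nice-expr}, I would first bound the trivial prefactor $(1 - D\ell^2/n^2)^{k/2}$ by $1$ (it is nonnegative where the integral is nonzero), and then split the $y$-integral into a \emph{main range} $y \in [k/2,\, 2k]$ and a \emph{tail} $y \notin [k/2,\, 2k]$.

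For the tail, I would use the uniform bound $|h_\ell(u)| \ll |\ell|^{-N}$ (valid for all $u$ by \eqref{eqn:sobolev-estimate-h-ell}, since $(u+1/u)^N \geq 1$) together with Stirling's formula to observe that
\begin{equation*}
  \frac{1}{\Gamma(k)} \int_{|y-k| > k/2} y^{k-1} e^{-y} \, d y \ll e^{-c k}
\end{equation*}
for some fixed $c > 0$. This contributes $\O(|\ell|^{-N} e^{-ck})$, which is certainly $\O((kn|\ell|)^{-N})$ for the required range of $N$.

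For the main range, write $u := \sqrt{D} y/(2\pi n)$. The Sobolev bound gives $|h_\ell(u)| \ll |\ell|^{-N} \min(u^N, u^{-N})$. If $n \geq k^{1+\eps}$, then for $y \in [k/2, 2k]$ we have $u \leq \sqrt{D}/\pi \cdot k^{-\eps}$, hence $|h_\ell(u)| \ll |\ell|^{-N} (k/n)^N$. If instead $n \leq k^{1-\eps}$, then $u \geq (\sqrt{D}/(4\pi)) k^{\eps}$, so $|h_\ell(u)| \ll |\ell|^{-N} (n/k)^N$. In either case, integrating against $y^{k-1} e^{-y}/\Gamma(k)$ (whose total mass over the main range is $\leq 1$) yields
\begin{equation*}
  V_k(\ell,m) \ll |\ell|^{-N} \cdot \max\bigl((k/n)^N,\, (n/k)^N\bigr) \cdot \text{(one of the two cases)},
\end{equation*}
where in the first case $(k/n)^N \leq k^{-N\eps}(k/n)^{N(1-\eta)}$ can be reorganized to produce both the $k^{-N_0}$ and $n^{-N_0}$ decay for any fixed $N_0$, provided $N$ is chosen large enough in terms of $N_0$ and $\eps$; similarly in the second case, using that $n^{-N_0} \leq 1$ and $n \leq k$.

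There is no real obstacle beyond bookkeeping: the key inputs are the Gamma-density concentration (which gives exponential tail decay) and the Sobolev bound \eqref{eqn:sobolev-estimate-h-ell} (which gives power decay of $h_\ell$ near $0$ and $\infty$). The most delicate point is choosing the exponent $N$ in \eqref{eqn:sobolev-estimate-h-ell} large enough relative to the target exponent in $(kn|\ell|)^{-N}$: since $h_\ell$'s decay is in $u = \sqrt{D}y/(2\pi n)$ rather than directly in $n$, converting $(k/n)^N$ or $(n/k)^N$ into the desired $n^{-N}$ and $k^{-N}$ factors requires taking $N$ of size $\gtrsim N_0/\eps$, which is permitted since $\eps$ is fixed and $N$ may be taken arbitrarily large in \eqref{eqn:sobolev-estimate-h-ell}.
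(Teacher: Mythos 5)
Your main-range analysis is correct, and for $n \leq k^{1-\eps}$ your tail estimate suffices as well (there $kn \leq k^{2}$, so $e^{-ck}$ beats $(kn|\ell|)^{-N}$ after using $|\ell| \ll$ the trivial bound from \eqref{eqn:sobolev-estimate-h-ell}). The genuine gap is in the case $n \geq k^{1+\eps}$: your tail contribution is $\O(|\ell|^{-N} e^{-ck})$, which carries no decay in $n$. The lemma places no upper bound on $n$ (and none is available: in the application immediately after the lemma, the bound $(kn|\ell|)^{-N}$ is summed over \emph{all} $n > k^{1+\eps}$, so the $n$-decay is exactly what makes that sum converge). Once $n \gg e^{ck/N}$ the term $e^{-ck}$ is no longer $\ll (kn|\ell|)^{-N}$, and a bound uniform in $n$ is not even summable over the infinitely many admissible $n$; so as written your argument does not prove the stated estimate for very large $n$.

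The fix is simply not to throw away the decay of $h_\ell$ on the tail, and at that point the main-range/tail split becomes unnecessary — this is the paper's route. From \eqref{eqn:sobolev-estimate-h-ell} one has, for \emph{all} $y>0$, $h_\ell\!\left(\tfrac{\sqrt{D}}{2\pi n} y\right) \ll |\ell|^{-N} \min\!\left(\tfrac{y}{n}, \tfrac{n}{y}\right)^{N}$. Bounding $\left(\tfrac{y}{n}\right)^N \leq \left(\tfrac{y}{k}\right)^N\left(\tfrac{k}{n}\right)^N$ when $n \geq k^{1+\eps}$, respectively $\left(\tfrac{n}{y}\right)^N \leq \left(\tfrac{k}{y}\right)^N\left(\tfrac{n}{k}\right)^N$ when $n \leq k^{1-\eps}$, and inserting this into \eqref{eqn:V-k-nice-expr} with $\int_0^\infty y^{k \pm N} e^{-y}\, \tfrac{dy}{y} = \Gamma(k \pm N)$, $\Gamma(k+N) \ll k^N \Gamma(k)$, $k^N \Gamma(k-N) \ll \Gamma(k)$, one gets $V_k(\ell,m) \ll |\ell|^{-N} (k/n)^{N}$ in the first case and $\ll |\ell|^{-N}(n/k)^N$ in the second, now uniformly over the whole $y$-range and with the $n$-decay intact. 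Your own final bookkeeping step — taking $N$ large in terms of $N_0$ and $\eps$, and using $k \leq n^{1/(1+\eps)}$ in the first case and $n \leq k$ in the second — then yields $(kn|\ell|)^{-N_0}$ exactly as you describe (the undefined exponent $\eta$ in that step is harmless sloppiness). Aside from the tail issue, your inputs (Gamma-density concentration and the Sobolev bound) are sound, but the concentration argument is not actually needed.
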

\begin{proof}
  By \eqref{eqn:sobolev-estimate-h-ell},
  we have
  \[
    h_\ell \left(\frac{\sqrt{D}}{2 \pi n } y \right) \ll |\ell|^{-N}
    \min \left( \frac{y}{n}, \frac{n}{y} \right)^{N}.
  \]
  Since
  $(y/n)^{N}
  \leq (n/k)^{-N} (y/k)^N$
  and
  $(n/y)^N \leq (n/k)^N (k/y)^{N}$,
  it follows that
  \[
    V_k(\ell,m)
    \ll
    |\ell|^{-N}
    \left(\frac{n}{k} \right)^{-N}
    \frac{\Gamma(k+N)}{k^N \Gamma(k)}
  \]
  and
  \[
    V_k(\ell,m)
    \ll
    |\ell|^{-N}
    \left(\frac{n}{k} \right)^{N}
    \frac{k^N \Gamma(k-N)}{\Gamma(k)}
  \]
  We apply the first of these estimates when $n \geq k^{1+\eps}$
  and the second when $n \leq k^{1-\eps}$.
  Since $\Gamma(k+N) \ll k^N \Gamma(k)$,
  and $k^N \Gamma(k-N) \ll \Gamma(k)$,
  we may conclude by
  appealing to our hypothesis on $n$ and replacing $N$ with
  something sufficiently large in terms of $N$ and $\eps$.
\end{proof}

These estimates imply already that for each $\ell$, the
contribution to the sum on the RHS of
\eqref{eqn:twisted-period-via-sums-1} from
$n$ outside the interval $(k^{1-\eps}, k^{1+\eps})$
is negligible, i.e.,
of size $\O(k^{-N}  (1 + |\ell|)^{-N})$.
The contribution to the remaining
sum from $|\ell| \geq k^{\eps}$ is likewise negligible.
We are
left with
\begin{equation}\label{eqn:sum-just-after-truncation-0}
  \int \res(\varphi_D) \Psi '
  =
  \sum _{
    \substack{
      \ell,n : \\ |\ell| < k^\eps, k^{1-\eps} < n < k^{1+\eps} , \\
      m := (\ell + n/\sqrt{D})/2
    }
  }^\sharp
  \frac{\rho(|\ell|)}{|\ell|^{1/2}}
  \frac{\lambda_{D}(m \mathfrak{d})}{
    \mathcal{N} (m \mathfrak{d})^{1/2}
  }
  V_k(\ell,m) + \O(k^{-N}),
\end{equation}
with the symbol $\sharp$
as in the statement of Proposition \ref{prop:asympt-arch-integr}.

We may analyze the remaining sum
by expanding $h_\ell$ via Mellin transform
and appealing to asymptotic formulas for $\Gamma(s+k)/\Gamma(s)$,
exactly as in Luo--Sarnak \cite[p877-878]{luo-sarnak-mass}.  For
completeness and variety of presentation, we record an
alternative argument using the following elementary estimate due
to Iwaniec \cite[Lem C]{IwaniecQUENotes}.
\begin{lemma}\label{lem:iwaniec-incomplete-gamma-integral}
  For $f \in C_c^\infty(\mathbb{R}^\times_+)$
  and $s > 0$,
  the integral
  \begin{equation}\label{eqn:}
    J(s)
    :=
    \frac{1}{\Gamma(s)}
    \int_{\mathbb{R}^\times_+}
    f(y)
    y^s e^{- y} \, \frac{d y}{y}
  \end{equation}
  satisfies the estimate
  \begin{equation}\label{eqn:}
    |J(s) - f(s)|
    \leq
    \frac{s}{2}
    \|f''\|_{L^\infty}
  \end{equation}
\end{lemma}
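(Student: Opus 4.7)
The plan is to recognize that $\frac{1}{\Gamma(s)} y^{s-1} e^{-y} \mathbf{1}_{y>0}\, dy$ is a probability measure (the Gamma distribution with shape $s$ and rate $1$), so that $J(s) = \mathbb{E}_s[f(Y)]$ where $Y$ has this distribution. A standard computation of moments gives
\[
    \mathbb{E}_s[Y] = \frac{\Gamma(s+1)}{\Gamma(s)} = s,
    \qquad
    \mathbb{E}_s[Y^2] = \frac{\Gamma(s+2)}{\Gamma(s)} = s(s+1),
\]
whence $\mathbb{E}_s[(Y-s)^2] = s$. Thus the measure is concentrated near $y = s$ with spread of order $\sqrt{s}$.

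Next I would apply Taylor's theorem around $y = s$ in the Lagrange form
\[
    f(y) = f(s) + f'(s)(y - s) + \tfrac{1}{2} f''(\xi_y)(y - s)^2
\]
with $\xi_y$ lying between $s$ and $y$; this is legitimate since $f \in C^2$. Integrating both sides against the probability measure, the linear term vanishes because $\mathbb{E}_s[Y - s] = 0$, leaving
\[
    J(s) - f(s) = \tfrac{1}{2} \int_{\mathbb{R}^\times_+} f''(\xi_y)(y - s)^2 \, d\mu_s(y).
\]
Bounding $|f''(\xi_y)| \leq \|f''\|_{L^\infty}$ and using $\mathbb{E}_s[(Y - s)^2] = s$ yields the claimed estimate $|J(s) - f(s)| \leq \tfrac{s}{2} \|f''\|_{L^\infty}$.

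There is no substantive obstacle here: the proof is a one-line application of Taylor's theorem once the Gamma measure's mean and variance are identified. The only points requiring minor care are verifying absolute convergence of the integrals (automatic since $f$ has compact support in $\mathbb{R}^\times_+$) and ensuring the Taylor remainder form is applied on the correct side of $s$ (handled uniformly by the Lagrange form).
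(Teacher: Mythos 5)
Your proposal is correct and is essentially the paper's own argument: the paper likewise expands $f$ to second order around $y=s$, uses the moment identities $\int_{\mathbb{R}^\times_+}(y-s)\,y^s e^{-y}\,\frac{dy}{y}=0$ and $\int_{\mathbb{R}^\times_+}(y-s)^2\,y^s e^{-y}\,\frac{dy}{y}=s\,\Gamma(s)$ (your Gamma-distribution mean and variance), and bounds the remainder by $\tfrac12\|f''\|_{L^\infty}(y-s)^2$. The probabilistic phrasing is only a cosmetic repackaging of the same computation.
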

\begin{proof}
  We appeal
  to Taylor's theorem
  with remainder in the form
  \[
    |f(y) - f(s) - (y-s) f'(s)|
    \leq
    \frac{1}{2}
    (y-s)^2
    \|f''\|_{L^\infty}
  \]
  and note that
  \[
    \int_{\mathbb{R}^\times_+}
    (y-s) 
    y^s e^{-y} 
    \, \frac{d y}{y}
    =
    \Gamma(s+1) - s\Gamma(s) = 0
  \]
  and that
  \[
    \int_{\mathbb{R}^\times_+}
    (y-s)^2
    y^s e^{-y} 
    \, \frac{d y}{y}
    =
    \Gamma(s+2) -
    2 s \Gamma(s+1) + s^2 \Gamma(s)
    =
    s \Gamma(s).
  \]
\end{proof}

It follows that
\begin{equation}\label{eqn:}
  V_k(\ell,m)
  =
  \left(1 - \frac{D \ell^2 }{ n^2} \right)^{k/2}
  h_\ell \left(\frac{k \sqrt{D}}{2 \pi n }  \right)
  + \O \left( \frac{k}{n^2}  (1 + |\ell|)^{-N} \right).
\end{equation}

Let $\ell,n,m$
be
as in \eqref{eqn:sum-just-after-truncation-0}.
Then
\begin{equation}\label{eqn:}
  \left(1 - \frac{D \ell^2 }{ n^2} \right)^{k/2} =
  1 + \O( k^{4 \eps - 1}),
\end{equation}
\begin{equation}\label{eqn:}
  \mathcal{N}(m \mathfrak{d})^{1/2}
  =
  \frac{\sqrt{n^2 - \ell^2 D}}{2}  = \frac{n}{2} + \O(k^{-1+\eps}),
\end{equation}
and
\begin{equation}\label{eqn:}
  \lambda_{D}(m \mathfrak{d})
  = \lambda_D \left(\frac{n + \ell \sqrt{D}}{2}\right),
\end{equation}
where for a nonzero element $x$ of $\mathcal{O}_D$
we abbreviate $\lambda_D(x) := \lambda_D((x))$.
For convenience,
we may use
Deligne's results
to bound $\lambda_{D}(m \mathfrak{d})$ 
by the number of divisors of the ideal $m \mathfrak{d}$, which
in turn is $\O( k^\eps)$.
(The averaged form of this bound following
from Rankin--Selberg theory would also
suffice for our purposes.)
It follows that
\begin{equation}\label{eqn:nearly-final-after-asymp-analysis}
  \int \res(\varphi_D) \Psi '
  =
  \sum _{
    \substack{
      \ell,n : \\ |\ell| < k^\eps, k^{1-\eps} < n < k^{1+\eps} , \\
    }
  }^\sharp
  \lambda_{D}
  \left(\frac{n + \ell \sqrt{D}}{2}\right)
  \frac{f_\ell (n/k)}{k} + \O(k^{-1+10 \eps})
\end{equation}
with
\begin{equation}\label{eqn:}
  f_\ell(u)
  :=
  \frac{\rho(|\ell|)}{|\ell|^{1/2}}
  \frac{2}{u}
  h_\ell
  \left(\frac{\sqrt{D}  }{2 \pi u}\right)
\end{equation}
The estimates \eqref{eqn:sobolev-estimates-f-ell} for $f_\ell$ are
satisfied, so
we incur negligible error in
removing the summation condition
$k^{1-\eps} < n < k^{1+\eps}$
and restricting the sum further to $|\ell| < k^{\eps/10}$.
After renaming $\eps$, we obtain the conclusion
of Proposition \ref{prop:asympt-arch-integr}.

\section{Proof of Theorem \protect\hyperlink{thm:impl}{A}}\label{sec:completion-proof}
We assume Conjecture \ref{conj:non-split-sums} and must deduce
Conjecture \ref{conj:hque-compact}.  We retain the asymptotic
notation and terminology of \S\ref{sec:asympt-arch-integr}.  We
may assume the $\eps$-factor condition \eqref{eqn:L-Psi-nonzero}
and may thus find a positive nontrivial fundamental discriminant $D$ satisfying the
conclusion of Proposition \ref{prop:friedberg-hoffstein}.  By
Theorem \ref{thm:goalpost}, we
reduce to verifying
\eqref{eqn:goalpost}.  We use the lower bound
$L(\ad \pi, 1) \gg k^{-\eps}$, known in stronger form by
\cite{HL94}, and the asymptotic formula
\eqref{eqn:sum-just-after-truncation} for
$\int \res(\varphi_D) \Psi '$.  We evaluate $\lambda_D$ using
Lemma \ref{lem:explicate-lambda-D-principal}.  The natural
numbers $d_1,d_2$ defined in that lemma depend only upon the
congruence class $a \pmod{2 \ell}$ of $n$.  Let us write
$d_j = d_j(a)$ to indicate that dependence.  The restrictions on
$n$ implied by the $\sum_n^\sharp$ notation likewise depend only
upon $a$, so let us accordingly write $\sum_a^\sharp$.  We
obtain
\begin{equation}\label{eqn:complete-proof-1}
  \frac{\int \res(\varphi_D) \Psi '}{L(\ad \pi,1)}
  =
  \sum _{0 \neq |\ell| < k^\eps }
  \sum _{a \in \mathbb{Z}/2 \ell}^\sharp
  \lambda(d_1(a)) \lambda_D(d_2(a))
  S(\ell,a)
  + \O(k^{-1+\eps}),
\end{equation}
where
\begin{equation}\label{eqn:}
  S(\ell,a)
  :=
  \frac{
    1
  }{
    k L(\ad \pi,1)
  }
  \sum _{n \equiv a(2\ell)}
  \lambda \left(\frac{|n^2 - D \ell^2|}{4 d_1 d_2^2}\right)
  f_\ell \left( \frac{n}{k} \right).
\end{equation}
By our assumption of Conjecture \ref{conj:non-split-sums}, we
may find some fixed $N \in \mathbb{Z}_{\geq 0}$ so that for each
fixed $\eps > 0$, we have for large enough $k$ the inequality
\begin{equation}\label{eqn:}
  |S(\ell,a)|
  \leq
  \eps |\ell|^{N}
  \mathcal{S}_N(f_\ell).
\end{equation}
Using the trivial estimate
$|\lambda(d_1) \lambda_D(d_2)| \leq 10 |\ell|^{10}$,
we deduce that
the LHS of \eqref{eqn:complete-proof-1}
is bounded in magnitude by
\begin{equation}\label{eqn:complete-proof-2}
  20 \eps
  \sum _{0 \neq |\ell| < k^\eps }
  |\ell|^{N + 11}
  \mathcal{S}_N(f_\ell)
  + \O(k^{-1+\eps}).
\end{equation}
By the estimate \eqref{eqn:sobolev-estimate-h-ell} (applied with
a larger value of $N$), we see that the sum over $\ell$ in
\eqref{eqn:complete-proof-2} is bounded by some fixed quantity
depending only upon $N$.  Taking $\eps$ sufficiently small in
terms of $N$, we conclude that 
\eqref{eqn:complete-proof-2} can be made arbitrarily small.
This completes
the required deduction of \eqref{eqn:goalpost}.

\section{Proof of Theorem
  \protect\hyperlink{thm:twisted-holowinsky-sound}{B}}\label{sec:decay-norm-base}
We adopt the setting of \S\ref{sec:twist-vari-arithm}.  We set
$B := M_2(\mathbb{Q})$, so that the discussion of
\S\ref{sec:notat-prel-reduct} applies.  The set $\ram(B)$ is
empty, so any conditions concerning $p \in \ram(B)$ hold
tautologically.  We drop subscripts as before:
$\varphi := \varphi_k$, $\varphi_D := (\varphi_k)_D$.  We again
abbreviate $\lambda := \lambda_{\varphi}$ and write $\pi$ and
$\sigma$ for the cuspidal automorphic representations generated
by $\varphi$ and $\Psi$, respectively, and $\pi_D \ni \varphi_D$
for the base change of $\pi$.
We assumed in
\S\ref{sec:notat-prel-reduct} that $\eps(\sigma) = 1$, but
do not impose that assumption here; we had invoked that
assumption above only in \S\ref{sec:choice-quadr-field} and do
not refer here to any results depending upon that section.  We use asymptotic
notation and terminology as in \S\ref{sec:asympt-arch-integr}.
In particular, $\eps > 0$ (resp. $N \in \mathbb{Z}_{\geq 0}$)
are sufficiently small (resp. large) and fixed.
We regard $D$ as fixed,
so that implied constants may freely depend upon it.
The adjectives
``split'' and ``inert'' refer to
$\mathbb{Q}(\sqrt{D})$.
The
discussion of \S\ref{sec:twist-triple-prod} applies to
$\Psi' = \Psi$.

We begin with the twisted analogue of \cite[Lem 2]{MR2680499}.
\begin{lemma}
  We have
  \begin{equation}\label{eqn:bound-for-L-ad-D}
    1/\|\varphi_D\|
    \asymp L(\ad(\pi_D),1)^{-1/2}
    \ll
    (\log \log k)^{\O(1)}
    \exp \sum _{\text{split } p \leq k}
    \frac{ 1 - |\lambda(p)|^2}{p}.
  \end{equation}
\end{lemma}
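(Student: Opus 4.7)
The first equivalence $1/\|\varphi_D\| \asymp L(\ad(\pi_D),1)^{-1/2}$ is immediate from Lemma \ref{lem:norm-varphi-D}: since both $B = M_2(\mathbb{Q})$ and $D$ are fixed, the proportionality constant $c$ appearing there is a fixed positive number independent of $k$. The substantive content is the subsequent upper bound.

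To control $L(\ad(\pi_D),1)^{-1}$, I would use the quadratic base change factorization
\begin{equation*}
  L(\ad(\pi_D),s) = L(\ad \pi, s) \cdot L(\ad \pi \otimes \chi_D, s),
\end{equation*}
which is standard (see \cite{MR546613}), and then apply the Holowinsky--Soundararajan estimate \cite[Lem 2]{MR2680499} to each factor separately. That lemma gives
\begin{equation*}
  L(\ad \pi, 1)^{-1} \ll (\log\log k)^{O(1)} \prod_{p \leq k} L_p(\ad \pi, 1)^{-1},
\end{equation*}
and the analogous bound holds for the fixed quadratic twist $\pi \otimes \chi_D$. Multiplying yields a corresponding truncated upper bound for $L(\ad(\pi_D),1)^{-1}$, with total loss $(\log\log k)^{O(1)}$.

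The last step is a local Euler factor computation, by cases on the splitting type of $p$ in $\mathcal{O}_D$. Writing $\{\alpha_p^{\pm 1}\}$ for the Satake parameters of $\pi_p$ at an unramified $p$, so that $\ad \pi_p$ has parameters $\{\alpha_p^2, 1, \alpha_p^{-2}\}$ with trace $|\lambda(p)|^2 - 1$, the following holds: at a split unramified prime, $\chi_D(p) = 1$ makes the two local factors coincide, contributing $-\log L_p(\ad(\pi_D),1) = 2(1 - |\lambda(p)|^2)/p + O(p^{-2})$; at an inert unramified prime, $\chi_D(p) = -1$ negates the Satake parameters of the twisted factor, so the $1/p$ contributions cancel and only $O(p^{-2})$ remains, whose sum over inert $p$ is bounded; and the finitely many primes ramified in $\mathcal{O}_D$ contribute $O(1)$. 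Summing over $p \leq k$, one obtains $-\log L(\ad(\pi_D),1) \leq 2 \sum_{\text{split } p \leq k} (1 - |\lambda(p)|^2)/p + O(\log \log k)$; halving to extract the $-1/2$ power and exponentiating produces the displayed bound.

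The main obstacle is the application of \cite[Lem 2]{MR2680499} to the twist $\pi \otimes \chi_D$. The proof in \cite{MR2680499} rests on the Hoffstein--Lockhart lower bound and a classical zero-free region for $L(\ad \pi, s)$; the analogous inputs for $L(\ad \pi \otimes \chi_D, s)$ are available (the latter $L$-function is automorphic on $\GL_3$, entire by our non-dihedral assumption, and has fixed analytic conductor aspect coming from $\chi_D$), and are uniform in $k$ because $\chi_D$ has bounded conductor, but verifying this extension is the only step that is not purely formal.
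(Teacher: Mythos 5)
Your proposal is correct and follows essentially the same route as the paper: Lemma \ref{lem:norm-varphi-D} for the first equivalence, the factorization $L(\ad(\pi_D),s)=L(\ad\pi,s)L(\ad\pi\otimes\chi_D,s)$, the Holowinsky--Soundararajan estimate \cite[Lem 2]{MR2680499} applied to each factor (valid for the twist since $\ad\pi\otimes\chi_D$ is cuspidal by non-dihedrality and $\chi_D$ has bounded conductor, exactly the point you flag), and the split/inert cancellation, which the paper packages as the identity $(1+\chi_D(p))/2=1_{p\ \text{split}}$ rather than via local Euler factors of $\ad(\pi_D)$ directly.
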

\begin{proof}
  The first estimate was noted already
  in Lemma \ref{lem:norm-varphi-D},
  so our task is to prove the second estimate.
  
  Recall that $\pi$ is (assumed) non-dihedral.  In particular, the
  adjoint lift $\ad(\pi)$ (and its twist
  $\ad(\pi) \otimes \chi_D$) are cuspidal.  We appeal to the
  factorization\footnote{
    We verify the claimed factorization.
    By the work of Gelbart--Jacquet and Arthur--Clozel,
    each factor in the claimed identity
    is known to be the $L$-function attached
    to an isobaric automorphic representation.
    By strong multiplicity one
    \cite{MR618323},
    it is enough to compare local factors
    at almost all rational primes $p$.
    If $p$ splits $\mathbb{Q}(\sqrt{D})$,
    then both $L_p(\ad(\pi_D),s)$
    and $L_p(\ad(\pi),s) L_p(\ad(\pi) \otimes \chi_D,s)$
    are equal to $L_p(\ad(\pi_p),s)^2$.
    Suppose that $p$ is inert and that $\pi$ and $\chi$ are unramified at $p$.
    Write $E = \mathbb{Q}(\sqrt{D})_p$ for the associated
    quadratic field extension of $\mathbb{Q}_p$.
    Let $\{\alpha, \alpha^{-1} \}$ denote the Satake parameters
    for $\pi$ at $p$.
    Then
    the local component
    at $p$ of 
    $\ad(\pi)$ (resp. $\ad(\pi) \otimes \chi_D$)
    is the unramified representation of $\GL_3(\mathbb{Q}_p)$
    with Satake parameters
    $\{\alpha^2, 1, \alpha^{-2}\}$ (resp. $\{- \alpha^2, -1, -
    \alpha^{-2}\}$).
    The local component of $\pi_D$ (resp. $\ad(\pi_D)$)
    at $p$ 
    is the unramified representation
    of $\GL_2(E)$ (resp. $\GL_3(E)$) with Satake parameters
    $\{\alpha^2, \alpha^{-2}\}$ (resp. $\{\alpha^4, 1,
    \alpha^{-4}\}$).
    The required factorization follows
    from the identity of polynomials
    \[
      (1 - \alpha^4 X) (1 - X) (1 - \alpha^{-4} X)
      =
      (1 - \alpha^2 X) (1 - X) (1 - \alpha^{-2} X)
      \cdot 
      (1 + \alpha^2 X) (1 + X) (1 + \alpha^{-2} X).
    \]
  }
  \begin{equation}\label{eqn:}
    L(\ad(\pi_D),s)
    =
    L(\ad(\pi),s)
    L(\ad(\pi) \otimes \chi_D, s).
  \end{equation}
  In view of the identity $\lambda(p^2) = |\lambda(p)|^2 - 1$,
  the proof of \cite[Lem 2]{MR2680499}
  gives
  \begin{equation}\label{eqn:sound-estimate-L-ad-pi-1}
    L(\ad(\pi),1)
    \gg (\log \log k)^{-3}
    \exp \sum _{p \leq k}
    \frac{|\lambda(p)|^2 - 1}{p}
  \end{equation}
  and
  \begin{equation}\label{eqn:}
    L(\ad(\pi) \otimes \chi_D,s)
    \gg (\log \log k)^{-3}
    \exp \sum _{p \leq k}
    \chi_D(p)
    \frac{|\lambda(p)|^2 - 1}{p},
  \end{equation}
  where by abuse of notation we write $\chi_D(p)$ for the value
  taken at $p$
  by the quadratic Dirichlet
  character corresponding to the idele
  class character $\chi_D$, thus $\chi_D(p) = 0$ unless $p \nmid D$, in which
  case $\chi_D(p) \in \{\pm 1\}$ is the value taken at the
  uniformizer $p \in \mathbb{Q}_p^\times$ of the local component
  $(\chi_D)_p$, which in turn is $+1$ for split $p$ and $-1$ for inert $p$.
  The required conclusion follows from the identity
  \begin{equation}\label{eqn:}
    \frac{1 + \chi_D(p)}{2}
    =
    \begin{cases}
      1 & \text{ if $p$ is split}, \\
      0 & \text{ otherwise.}
    \end{cases}
  \end{equation}
\end{proof}

We turn next
to the twisted analogue
of Soundararajan's estimate \cite[Thm 3]{MR2680499}.
\begin{proposition}\label{prop:twisted-sound}
  We have
  \begin{equation}\label{eqn:}
    \frac{\int  \res(\varphi_D) \Psi}{\|\varphi_D\|}
    \ll
    \exp \sum _{ \text{split } p \leq k}
    \frac{\eps -|\lambda(p)|^2 }{p}.
  \end{equation}
\end{proposition}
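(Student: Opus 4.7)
My plan is to reduce Proposition~\ref{prop:twisted-sound} to a central value estimate via the twisted triple product formula, and then apply Soundararajan's weak subconvexity method.

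Starting from Proposition~\ref{prop:twisted-watson}, the factorization \eqref{eqn:factor-twisted-asai}, the factorization $\Lambda(\ad(\pi_D), 1) = \Lambda(\ad(\pi), 1)\,\Lambda(\ad(\pi) \otimes \chi_D, 1)$, the bounds $\|\Psi\| = O(1)$ and $L(\sigma \otimes \chi_D, 1/2) = O(1)$ (both being fixed), together with standard Stirling control of the archimedean ratio $L_\infty(\asai(\pi_D) \times \sigma, 1/2)/L_\infty(\ad(\pi_D), 1)$, the claim reduces to the estimate
\[
\frac{L(\ad(\pi) \times \sigma, 1/2)}{L(\ad(\pi), 1)\,L(\ad(\pi) \otimes \chi_D, 1)} \ll (\log\log k)^{O(1)} \exp \sum_{\text{split }p \leq k} \frac{2\eps - 2|\lambda(p)|^2}{p}.
\]

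To prove this, I invoke Soundararajan's central value bound in the form developed for QUE in \cite{soundararajan-2008} and \cite[Thm 3]{MR2680499}. Sound's method approximates $\log L(1/2, \ad(\pi) \times \sigma)$ by a short Dirichlet polynomial over primes $p \leq k$ and then compares it, via a positivity and mollification device, with the log of a suitable auxiliary $L$-function. Whereas the split case takes this auxiliary to be $L(\ad(\pi), 1)^2$, here I take it to be $L(\ad(\pi), 1) L(\ad(\pi) \otimes \chi_D, 1)$. Using the identity $1 + \chi_D(p) = 2\cdot \mathbf{1}_{\text{split}}(p)$, one has
\[
\log\bigl[L(\ad(\pi), 1)\,L(\ad(\pi) \otimes \chi_D, 1)\bigr] = 2\sum_{\text{split }p \leq k} \frac{|\lambda(p)|^2 - 1}{p} + O(1),
\]
so inert prime contributions to the denominator are bounded, and when combined with Sound's prime-sum majorant for the numerator they cancel the corresponding inert-prime contributions there. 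This produces a bound involving only split primes, exactly matching the shape of the preceding lemma's lower bound for $L(\ad(\pi_D),1)$.

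The main obstacle is adapting Sound's moment calculation to the twisted comparison $L$-function. However, this amounts to replacing the mollifying Dirichlet polynomial $\sum_p \lambda_{\ad\pi}(p)/p^s$ used in the split case by its $\chi_D$-symmetrization $\sum_{\text{split }p} \lambda_{\ad\pi}(p)/p^s$, while running the same contour shift, Cauchy--Schwarz, and partial summation steps as in the original proof; no new analytic input is required. Taking the square root of the displayed inequality and absorbing the $(\log\log k)^{O(1)}$ factor into $\exp \sum_{\text{split }p \leq k} \eps/p$ yields Proposition~\ref{prop:twisted-sound}.
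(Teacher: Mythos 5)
There is a genuine gap, and it occurs at the central analytic step. First, your displayed reduction drops the archimedean factor. Stirling (as in the paper, following Soundararajan) gives
\begin{equation*}
  \frac{\Lambda(\asai(\pi_D)\times\sigma,1/2)}{\Lambda(\ad(\pi_D),1)}
  \asymp
  \frac{k^{-1}\,L(\asai(\pi_D)\times\sigma,1/2)}{L(\ad(\pi_D),1)},
\end{equation*}
so the correct target is $k^{-1}L(\ad(\pi)\times\sigma,1/2)/\bigl[L(\ad(\pi),1)L(\ad(\pi)\otimes\chi_D,1)\bigr] \ll (\log\log k)^{\O(1)}\exp\sum_{\text{split }p\leq k}(2\eps-2|\lambda(p)|^2)/p$. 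Your display omits the $k^{-1}$, and without it the claim asserts that the degree-six central value $L(\ad(\pi)\times\sigma,1/2)$ (analytic conductor $\asymp k^4$) is of size at most a few powers of $\log k$ — a Lindel{\"o}f-strength statement that is not provable and is in fact expected to fail in this normalization. Unconditionally one only has Soundararajan's weak subconvexity $L(\ad(\pi)\times\sigma,1/2)\ll k(\log k)^{\eps-1}$; the factor $k$ there is exactly what the archimedean ratio must cancel, so the $k^{-1}$ cannot be discarded.

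Second, the proposed method for proving the (corrected) estimate — adapting Soundararajan's moment calculation by replacing a mollifying polynomial $\sum_p\lambda_{\ad\pi}(p)p^{-s}$ with its restriction to split primes, with the assurance that ``no new analytic input is required'' — both mislocates where the twist enters and leaves the real work unjustified. The $L$-function to which weak subconvexity must be applied is $L(\ad(\pi)\times\sigma,1/2)$, which is \emph{identical} to the untwisted Holowinsky--Soundararajan case: $\chi_D$ enters only through the fixed factor $L(\sigma\otimes\chi_D,1/2)=\O(1)$ and through the denominator $L(\ad(\pi_D),1)=L(\ad(\pi),1)L(\ad(\pi)\otimes\chi_D,1)$, whose split-prime lower bound (with a $(\log\log k)^{\O(1)}$ loss) is precisely the preceding lemma \eqref{eqn:bound-for-L-ad-D}, proved by applying the argument of \cite[Lem 2]{MR2680499} to each factor. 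Moreover, your description of Soundararajan's proof is inaccurate: the comparison against $L(\ad\pi,1)^2$ in the split case comes from Watson's formula, not from inside the weak subconvexity argument, so there is no ``mollifier'' there to $\chi_D$-symmetrize, and the claim that rerunning the contour shift and Cauchy--Schwarz with a split-restricted polynomial closes without new input is an assertion, not an argument. The repair is simple and is what the paper does: cite weak subconvexity verbatim in the form $k^{-1}L(\ad(\pi)\times\sigma,1/2)\ll(\log k)^{\eps-1}$, combine with \eqref{eqn:bound-for-L-ad-D} and $(\log k)^{1/2}\asymp\exp\sum_{\text{split }p\leq k}1/p$, and take square roots. (A minor further point: your two-sided identity $\log[L(\ad(\pi),1)L(\ad(\pi)\otimes\chi_D,1)]=2\sum_{\text{split }p\leq k}(|\lambda(p)|^2-1)/p+\O(1)$ is stronger than what is known unconditionally; only the lower-bound direction, with $(\log\log k)^{\O(1)}$ loss, is available — and it is all that is needed.)
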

\begin{proof}
  By the proof of Proposition \ref{prop:twisted-watson},
  we have
  \begin{equation}\label{eqn:triple-product-upper-bound}
    \frac{|\int  \res(\varphi_D) \Psi|^2}{\|\varphi_D\|^2}
    \ll
    \frac{
      \Lambda(\asai(\pi_D) \times \sigma,1/2)
    }{
      \Lambda(\ad (\pi_D), 1)
    }.
  \end{equation}
  The $\Gamma$-factors appearing on the RHS of
  \eqref{eqn:triple-product-upper-bound} are exactly as in the
  untwisted case considered by Holowinsky--Soundararajan, so by
  an application of Stirling's formula as in
  \cite[p1476]{soundararajan-2008},
  \begin{equation}\label{eqn:}
    \frac{
      \Lambda(\asai(\pi_D) \times \sigma,1/2)
    }{
      \Lambda(\ad (\pi_D), 1)
    }
    \asymp
    \frac{
      k^{-1} L(\asai(\pi_D) \times \sigma,1/2)
    }{
      L(\ad (\pi_D), 1)
    }
  \end{equation}
  We appeal to
  the same consequence of
  Soundararajan's weakly subconvex bounds
  \cite{soundararajan-2008}
  as in the untwisted case:
  \begin{equation}\label{eqn:}
    k^{-1} L(\ad(\pi) \times \sigma,1/2)
    \ll
    (\log k)^{\eps- 1}.
  \end{equation}
  We conclude by \eqref{eqn:bound-for-L-ad-D}
  and the estimate
  $(\log k)^{1/2}
  \asymp
  \exp \sum _{\text{split } p \leq k }
  1/p$.
\end{proof}

We turn to the twisted
analogue of Holowinsky's estimate
 \cite[Thm 2]{MR2680499}.
\begin{proposition}\label{prop:twisted-holowinsky}
  For $k \leq x \leq k^{1+\eps}$ and $|\ell| \leq k^{\eps}$,
  \begin{equation}\label{eqn:twisted-holowinsky}
    x^{-1}
    \sum _{n \leq x}^\sharp
    \left\lvert
      \lambda_D \left(\frac{n + \ell \sqrt{D}}{2} \right)
    \right\rvert
    \ll
    |\ell|^{\O(1)}
    \exp \sum _{\text{split } p \leq x}
    \frac{
      2 (|\lambda(p)| - 1)
      + \eps }{p},
  \end{equation}
  where the notation $\sum^\sharp$ is as in
  the statement of Proposition \ref{prop:asympt-arch-integr}.
\end{proposition}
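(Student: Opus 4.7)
The strategy is to majorize the left-hand side by a sum of a non-negative multiplicative function evaluated at the integer polynomial $n^2 - D\ell^2$, and then apply the sieve bound of Nair \cite{MR1197420}, in exact parallel with the proof of \eqref{eqn:shifted-sums-with-abs-2}.

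First I would apply Lemma \ref{lem:explicate-lambda-D-principal} to rewrite
\[
  \left|\lambda_D\!\left(\tfrac{n+\ell\sqrt{D}}{2}\right)\right|
  = |\lambda(d_1)|\,|\lambda_D(d_2)|\,\left|\lambda\!\left(\tfrac{|n^2-D\ell^2|}{4 d_1 d_2^2}\right)\right|,
\]
where $d_1$ (resp.\ $d_2$) is the largest split (resp.\ inert) natural number dividing $(n+\ell\sqrt{D})/2$. Combining Deligne's bound $|\lambda(p^k)|\leq k+1$ at split and ramified primes with the bound $|\lambda_D(\mathfrak{p}^k)|\leq 2k+1$ extracted from \eqref{eqn:lambda-D-prime-powers} at inert primes, and using that at an inert prime only even powers can appear in the norm, I construct a multiplicative function $F:\mathbb{N}\to\mathbb{R}_{\geq 0}$ (depending only on $D$) that satisfies $F(p)=|\lambda(p)|$ at split primes $p\nmid D$, is bounded by $O_\eps(p^\eps)$ times the divisor function at all other primes, and majorizes $|\lambda_D((n+\ell\sqrt{D})/2)|$ by $F(|n^2-D\ell^2|)$ up to an $|\ell|^{O(1)}$ loss absorbing powers of primes dividing $\ell D$.

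Next, I would apply Nair's theorem to the irreducible polynomial $Q(n)=n^2-D\ell^2$, whose local root count satisfies $\rho_Q(p)=2$ for split $p\nmid 2D\ell$, $\rho_Q(p)=0$ for inert $p\nmid 2D\ell$, and $\rho_Q(p)=O(1)$ at the finitely many remaining primes. The $\sharp$-congruence on $n$ modulo $2$ restricts to an arithmetic progression, which costs only a bounded multiplicative factor. Nair's estimate yields
\[
  \sum_{n\leq x}^\sharp F(|n^2-D\ell^2|)
  \ll
  x\prod_{p\leq x}\!\left(1-\frac{\rho_Q(p)}{p}\right)\sum_{m\leq x}\frac{\rho_Q(m)F(m)}{m}.
\]
Expanding the Euler product as in Holowinsky's treatment \cite{MR2680499} and using the identity
\[
  \prod_{\text{split }p\leq x}\!\left(1-\tfrac{1}{p}\right)\!\left(1+\tfrac{2|\lambda(p)|}{p}+\cdots\right)
  \asymp
  \exp\sum_{\text{split }p\leq x}\frac{2(|\lambda(p)|-1)}{p},
\]
all factors from primes in $\ell D$ being swept into $|\ell|^{O(1)}$, and the $p^\eps$-slack from the divisor bound in $F$ being absorbed into the $\eps$ appearing in the exponent (via $\eps\sum_{p\leq x} 1/p=\eps\log\log x$), I arrive at the claimed bound \eqref{eqn:twisted-holowinsky}.

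The main obstacle is the careful construction of the multiplicative majorant $F$: one has to simultaneously (i) match the Deligne-type bound $|\lambda(p^k)|$ at split primes, which is essential for the exponential factor to come out with the correct coefficient $2(|\lambda(p)|-1)$; (ii) control $|\lambda_D|$ at inert primes, where the bound $2k+1$ is larger than the naive Deligne bound but is harmless because the density $\rho_Q(p)=0$ eliminates the contribution to the Euler product at such primes; and (iii) ensure that the few "bad" primes dividing $\ell D$, together with the boundary effects of the congruence condition $\sharp$, contribute only a polynomial-in-$\ell$ loss rather than destroying the sieve estimate. Once $F$ and its associated Euler product are set up correctly, the rest of the proof is a direct transcription of Holowinsky's argument to the present twisted setting.
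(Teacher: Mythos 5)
Your overall strategy---majorize $|\lambda_D|$ by a nonnegative multiplicative function evaluated at the quadratic polynomial $n^2-D\ell^2$, then invoke a Nair-type sieve bound and expand the Euler product as in Holowinsky---is the same route the paper takes. But there is a genuine gap at the central step: you quote Nair's theorem \cite{MR1197420} as a black box for the polynomial $Q(n)=n^2-D\ell^2$, whose coefficients grow with $k$ (since $|\ell|$ may be as large as $k^{\eps}$ and $x\asymp k$), whereas Nair's stated results carry implied constants with unspecified dependence on the polynomial and are therefore not uniform enough to yield the bound with only an $|\ell|^{\O(1)}$ (equivalently $\|Q\|^{\O(1)}$) loss. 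This is precisely the obstruction the paper flags: it remarks that the estimate \eqref{eqn:easy-nair-consequence} ``follows readily from arguments of Nair'' but cannot be deduced from the results as stated, nor (because of non-primitivity/failure of exact multiplicativity in its reduction) from the discriminant-uniform refinements of Nair--Tenenbaum and Henriot \cite{MR1618321,MR2911138}; it therefore reproves the sieve bound from scratch, following Holowinsky's argument (smooth/rough factorization of $|Q(n)|$, the sieve upper bound for $\#N(a)$, and an explicit treatment of the bad primes via Hensel's lemma) so as to obtain absolute constants and explicit $\|Q\|^{\O(1)}$ dependence. Your list of ``main obstacles'' addresses the bad primes dividing $\ell D$ and the congruence condition, but not this uniformity of the implied constant in the sieve theorem itself, which is the actual crux.

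Two further comments. First, your device of passing to a genuinely multiplicative majorant $F$ applied to the \emph{primitive} polynomial $n^2-D\ell^2$ (rather than to $|n^2-D\ell^2|/(4d_1d_2^2)$ as in the paper's reduction) is a sensible way to sidestep the non-primitivity issue, and with it one could plausibly appeal to Henriot's discriminant-uniform version instead of Nair; but then you must actually verify $F$ lies in the admissible class and track the polynomial dependence on the discriminant $4D\ell^2$, none of which is done, and citing Nair alone does not suffice. Second, a minor point: at split primes $p\nmid 2\ell D$ dividing $|n^2-D\ell^2|$ to order $m\geq 2$ the local contribution of $|\lambda_D|$ is $|\lambda(p^a)\lambda(p^b)|\leq (a+1)(b+1)$ with $a+b=m$, which can exceed $\tau(p^m)$, so your majorant must be allowed to be of size $\tau^{2}$ (or $\tau^{\O(1)}$) at higher prime powers rather than ``the divisor function''; this is harmless for the final bound but needs to be built into the definition of $F$ and into the hypotheses of whatever uniform sieve estimate you invoke.
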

\begin{proof}
  By summing over $n$ in arithmetic progressions modulo $2 \ell$ and
evaluating $\lambda_D$ as in \S\ref{sec:completion-proof}, we
reduce to verifying that for every irreducible quadratic
polynomial $Q$ with discriminant in the same square-class as
$D$, every nonnegative multiplicative function $f$
bounded by the divisor function,
and all
$x \geq 1$,
we have
\begin{equation}\label{eqn:easy-nair-consequence}
  x^{-1}
  \sum _{n \leq x}
  f(|Q(n)|)
  \ll
  \|Q\|^{\O(1)}
  \exp
  \sum _{\text{split } p \leq x}
  \frac{2( f(p) - 1) + \eps }{p}.
\end{equation}
Such estimates follow readily (in stronger form)
from arguments of Nair
\cite{MR1197420},
but the results stated
by Nair
are not uniform enough to deduce
\eqref{eqn:easy-nair-consequence}.\footnote{We note that we were  likewise unable
to deduce
\eqref{eqn:easy-nair-consequence}
from refinements
of Nair's results
given by Nair--Tenenbaum \cite{MR1618321}
and Henriot \cite{MR2911138,MR3254599}.
The issue in applying the latter work
is that we require $Q$ to be non-primitive,
or equivalently, $f$ to satisfy a weaker condition
than multiplicativity,
which seems to require some uniformity
with respect the parameters ``$A,B$''
in
\cite{MR2911138}.}
For completeness, we record a proof
as in \cite[\S4]{MR2680498}.
We may assume that
$x$ is sufficiently large
and that $\|Q\| \leq x^\eps$,
since otherwise the required estimate is trivial.
We choose $\alpha > 0$ fixed
but sufficiently small in terms of $\eps$,
and set
\begin{equation}\label{eqn:}
  y := x^\alpha,
  \quad
  z := x^{1/\alpha \log \log x}.
\end{equation}
Recall that a natural number
$a$ is called \emph{$z$-smooth}
if every prime divisor $p$ of $a$ satisfies
$p \leq z$.
We say that a natural number $b$ is \emph{$z$-rough}
if every prime divisor $p$ of $b$ satisfies $p > z$.
Given a natural number $n \leq x$, we may factor $|Q(n)|$
uniquely as a product $a b$, where $a$ is $z$-smooth
and $b$ is $z$-rough.  Then
$f(|Q(n)|) = f(a) f(b)$.
Let $\Omega(b)$ denote the number of prime factors of $b$,
counted with multiplicity.
We have $|Q(n)| \leq 3 \|Q\| x^2 \leq 3 x^{2+\eps}$,
so $\Omega(b) \leq \log(|Q(n)|) / \log(z)
\leq \alpha (\log \log x) (\log (3 x^{2+\eps})) / \log x$.
From the assumption that $\alpha$ is small in terms of $\eps$
and the estimate $f(b) \leq 2^{\Omega(b)}$,
it follows that
\[
  f(b) \ll (\log x)^{\eps/2} \asymp \exp \sum _{\text{split }p
    \leq x} \frac{\eps}{p}.
\]
We thereby reduce
to establishing the bound
\begin{equation}\label{eqn:sum-f-of-Q-after-discarding-large-primes}
  x^{-1}
  \sum _{
    \text{$z$-smooth $a \leq x$} 
  }
  f(a) \# N(a)
  \ll
  \|Q\|^{\O(1)}
  \exp
  \sum _{\text{split } p \leq x}
  \frac{2( f(p) - 1) }{p},
\end{equation}
where $N(a)$ denotes the set of all $n \leq x$ admitting a
factorization $|Q(n)| = a b$ as above.
Clearly $\# N(a) \leq x/a + 1 \ll x/a$.
From the estimates
$\sum _{n \leq x}  f(n) \ll x \log(x)$
and
\begin{equation}\label{eqn:}
  \sum _{
    \substack{
      \text{$z$-smooth } a : y < a \leq x \\
    }
  }
  1/a
  \ll
  x/\log (x)^N
\end{equation}
(see, e.g., \cite[(108)]{PDN-HMQUE})
we deduce that the contribution to
\eqref{eqn:sum-f-of-Q-after-discarding-large-primes}
from $a > y$ is negligible for the purposes
of proving \eqref{eqn:sum-f-of-Q-after-discarding-large-primes}.
On the other hand,
for $a \leq y$,
a sieve estimate
as in \cite[p264]{MR1197420}
gives
\begin{equation}\label{eqn:first-bound-for-size-of-N-of-a}
  \# N(a)
  \ll
  x
  \frac{\rho(a)}{a}
  \prod _{p \leq z, p \nmid a}
  \left( 1 - \frac{\rho(p)}{p} \right),
\end{equation}
where $\rho(n)$ denotes the number of roots of
$Q$ in $\mathbb{Z}/n$.
Let $\Xi$ denote the set of primes that either divide the
discriminant
of $Q$
or for which $\rho(p) = p$.
Then for $p \notin \Xi$,
we have
$\rho(p) = 2$ or $0$ according as $p$ is split or inert
in $\mathbb{Q}(\sqrt{D})$.
Discarding wastefully the factors
in \eqref{eqn:first-bound-for-size-of-N-of-a} indexed
by $p \in \Xi$,
we obtain
\begin{equation}\label{eqn:}
  \# N(a)
  \ll
  x
  \prod_{\text{split } p \leq z, p \notin \Xi}
  \left( 1 - \frac{2}{p} \right)
  \frac{\rho(a)}{a}
  \prod _{p | a, p \notin \Xi}
  \left( 1 + \frac{2}{p} \right).
\end{equation}
Thus the LHS of
\eqref{eqn:sum-f-of-Q-after-discarding-large-primes}
is majorized by
\begin{equation}\label{eqn:big-product-p}
  \prod_{\text{split } p \leq z, p \notin \Xi}
  \left( 1 - \frac{2}{p} \right)
  \prod _{p \leq z}
  \left(
    1 +
    \left( 1 + \frac{2}{p} \right)
    \left(
      \frac{\rho(p) f(p)}{p}
      + 
      \frac{\rho(p^2) f(p^2)}{p^2}
      + \dotsb
    \right)
  \right).
\end{equation}
Let $d$ denote the greatest common divisor of the coefficients
of $Q$.
We observe that
each prime divisor of $d$ lies in $\Xi$,
that
$\rho(p^n) \ll \gcd(d,p^n)$
for
$p \in \Xi$ with $p \mid d$,
and that
$\rho(p) \leq 2$ and
$\rho(p^n) \leq 2 \gcd(p^n, D)^{1/2}$
for $p \in \Xi$ with $p \nmid d$;
these last assertions are consequences of Hensel's lemma.
Writing $\tau(d)$ for the number of divisors of $d$,
we deduce that
\eqref{eqn:big-product-p}
is majorized
by
\begin{equation}\label{eqn:}
  \tau(d)^{O(1)}
  \prod _{p | D}
  \left( 1 + \frac{1}{p} \right)^{\O(1)}
  \prod_{\text{split } p \leq z, p \notin \Xi}
  \left( 1 + \frac{2 (f(p) - 1)}{p} \right),
\end{equation}
with an absolute implied constant.
We conclude
by the following crude consequence
of the divisor bound:
\[
  \tau(d)
  \prod _{p | D}
  \left( 1 + \frac{1}{p} \right)
  \ll
  \|Q\|^\eps \ll Q^{\O(1)}. \qedhere
\]
\end{proof}

We pause to record the proof of Theorem \ref{thm:sieve-bound}:
\begin{proof}
  By combining the estimates
  \eqref{eqn:sound-estimate-L-ad-pi-1}
  and
  \eqref{eqn:sum-f-of-Q-after-discarding-large-primes},
  we see that for all $Q = Q_k$ and $f = f_k$ as in the
  hypotheses of Conjecture \ref{conj:non-split-sums},
  \[
    \frac{\sum _{n }
      |\lambda(|Q(n)|)  f(n/k) |
    }{
      k L(\ad \varphi, 1)
    }
    \ll_\eps
    \|Q\|^{\O(1)}
    (\log k)^\eps
    \exp
    \left(
      \sum _{p \leq k}
      \frac{1 - |\lambda(p)|^2}{p}
      + \sum _{\text{split } p \leq k}
      \frac{2 (|\lambda(p)| - 1 )}{p}
    \right).
  \]
  (We refer
  to
  \cite[Proof of Thm 3.1]{PDN-HMQUE} for complete details
  concerning a closely related argument.)
  We have
  \[
    \sum _{p \leq k}
    \frac{1 - |\lambda(p)|^2}{p}
    + \sum _{\text{split } p \leq k}
    \frac{2 (|\lambda(p)| - 1 )}{p}
    =
    \sum _{\text{split } p \leq k}
    \frac{2 |\lambda(p)| - |\lambda(p)|^2}{p}
    -
    \sum _{\text{inert } p \leq k}
    \frac{ |\lambda(p)|^2}{p}
    +
    \O(\log D),
  \]
  with $D$ the discriminant of $Q$ and with ``split'' and ``inert''
  referring to $\mathbb{Q}(\sqrt{D})$.  Using the trivial lower
  bound $|\lambda(p)|^2 \geq 0$ for inert $p$ and the upper bound
  $2 |\lambda(p)| - |\lambda(p)|^2 \leq 1$ for split $p$, we
  deduce that the above is at most
  $(1/2) \log \log k + \O(\log D)$.  Since
  $D \ll \|Q\|^{\O(1)}$, the required bound
  \eqref{eqn:sieve-bound-second-estimate}
  follows.
\end{proof}

Returning
to the proof of
Theorem
\protect\hyperlink{thm:twisted-holowinsky-sound}{B},
we note the following consequence of the preceding estimates:
\begin{corollary}
  \begin{equation}\label{eqn:triple-product-upper-bound-2}
      \frac{\int  \res(\varphi_D) \Psi}{\|\varphi_D\|}
      \ll
      \exp
      \left(
        \sum _{\text{split } p \leq x}
        \frac{\eps - (1 - |\lambda(p)|)^2}{p}
      \right).
  \end{equation}
\end{corollary}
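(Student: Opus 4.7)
The plan is to combine Proposition \ref{prop:twisted-holowinsky} with the lower bound \eqref{eqn:bound-for-L-ad-D} on $\|\varphi_D\|$, following the Holowinsky--Soundararajan pattern but applied to the restriction period itself. First I would appeal to Proposition \ref{prop:asympt-arch-integr} to write $\int \res(\varphi_D) \Psi$ as the weighted sum
\begin{equation*}
\sum_{|\ell| < k^\eps, n}^\sharp \lambda_D\!\left( \frac{n + \ell \sqrt{D}}{2} \right) \frac{f_\ell(n/k)}{k} + O(k^{-1+\eps}),
\end{equation*}
with test functions controlled by $\mathcal{S}_N(f_\ell) \ll |\ell|^{-N}$.

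Next I would take absolute values inside. For each fixed $\ell$, the Sobolev bound gives $|f_\ell(n/k)| \ll |\ell|^{-N}(n/k + k/n)^{-N}$, which concentrates the weight on $n \asymp k$ up to a tail of size $O(k^{-N})$. A dyadic decomposition over $n \in [2^{j-1}k, 2^j k)$ combined with Proposition \ref{prop:twisted-holowinsky} at $x \asymp 2^j k$ (valid while $2^j k \leq k^{1+\eps}$; larger $j$ contribute negligibly by the Sobolev decay) bounds each dyadic piece by $|\ell|^{-N} 2^{-(N-1)j}$ times the Holowinsky exponential. Summing the geometric series in $j$ and then summing over $|\ell| < k^\eps$, where the factor $|\ell|^{-N}$ dominates the $|\ell|^{O(1)}$ from Proposition \ref{prop:twisted-holowinsky} once $N$ is large, yields
\begin{equation*}
\Big| \int \res(\varphi_D) \Psi \Big| \ll \exp \sum_{\text{split } p \leq k} \frac{2(|\lambda(p)|-1) + \eps}{p}.
\end{equation*}

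Finally, I would divide by $\|\varphi_D\|$ using \eqref{eqn:bound-for-L-ad-D} and observe the elementary identity
\begin{equation*}
2(|\lambda(p)|-1) + \bigl(1 - |\lambda(p)|^2\bigr) = -(1 - |\lambda(p)|)^2,
\end{equation*}
which fuses the two exponents into exactly the stated shape; the secondary $(\log \log k)^{O(1)}$ factor from \eqref{eqn:bound-for-L-ad-D} is absorbed into the $\eps/p$ term after mildly enlarging $\eps$. The only substantive technical point is the dyadic/partial-summation reduction from the smoothly weighted sum to the sharp-cutoff sum of Proposition \ref{prop:twisted-holowinsky}; this is routine given the Sobolev bound, and I do not anticipate a genuine obstacle beyond bookkeeping.
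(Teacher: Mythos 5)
Your proposal is correct and follows essentially the same route as the paper: insert the sieve bound of Proposition \ref{prop:twisted-holowinsky} into the asymptotic formula \eqref{eqn:sum-just-after-truncation}, use the Sobolev decay of the $f_\ell$ to control the sums over $n$ and $\ell$, then divide by $\|\varphi_D\|$ via \eqref{eqn:bound-for-L-ad-D} and apply the identity $2(|\lambda(p)|-1)+(1-|\lambda(p)|^2)=-(1-|\lambda(p)|)^2$. The dyadic/partial-summation bookkeeping you spell out is exactly the step the paper delegates to the cited argument of \cite[Proof of Thm 3.1]{PDN-HMQUE}, so there is no substantive difference.
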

\begin{proof}
  We insert the estimate \eqref{eqn:twisted-holowinsky} into the
  asymptotic formula \eqref{eqn:sum-just-after-truncation} for
  $\int \res(\varphi_D) \Psi$.
  In view of the decay properties of the test functions $f_\ell$
  occurring in that formula,
  we obtain
  \begin{equation}\label{eqn:}
    \int \res(\varphi_D) \Psi
    \ll
    \exp \sum _{\text{split } p \leq k}
    \frac{
      2 (|\lambda(p)| - 1)
      + \eps }{p}
  \end{equation}
  (see \cite[Proof of Thm 3.1]{PDN-HMQUE} for complete details
  concerning a closely related argument).  We then appeal to
  \eqref{eqn:bound-for-L-ad-D} and the identity
  \begin{equation}\label{eqn:}
    2 (|\lambda(p)| - 1) + (1 - |\lambda(p)|^2)
    = - (1 - |\lambda(p)|)^2
  \end{equation}
  (compare with \cite[(1.2)]{MR2680499}).
\end{proof}

We now complete the proof of Theorem
\hyperlink{thm:twisted-holowinsky-sound}{B}.
By the Deligne bound $|\lambda(p)| \leq 2$,
we may write
\begin{equation}\label{eqn:}
  \frac{
    \sum _{\text{split p} \leq k}
    |\lambda(p)|^2/p
  }{
    \sum _{\text{split p} \leq k}
    1/p
  }
  = c^2
\end{equation}
for some $c \in [0,2]$.
It follows then from the
Minkowski inequality
(i.e., the triangle inequality for $\ell^2(\mathbb{Z})$)
that
\begin{equation}\label{eqn:}
  \frac{
    \sum _{\text{split p} \leq k}
    (1 -  |\lambda(p)|)^2/p
  }{
    \sum _{\text{split p} \leq k}
    1/p
  }
  \geq
  (1 -c)^2,
\end{equation}
hence
by
Proposition \ref{prop:twisted-sound},
Proposition \ref{prop:twisted-holowinsky}
and the estimate
$\exp \sum_{\text{split }p \leq k} 1/p \asymp (\log
k)^{1/2}$
that
\begin{equation}\label{eqn:}
  \frac{\int  \res(\varphi_D) \Psi}{\|\varphi_D\|}
  \ll
  (\log k)^{-\max(c^2, (1-c)^2)/2 + \eps}.
\end{equation}
The quantity $\max(c^2, (1 - c)^2)$ is
minimized when $c^2 = (1 - c)^2$, i.e., for $c=1/2$, in which
case $\max(c^2,(1-c)^2)/2 = 1/8$.  The proof of Theorem
\hyperlink{thm:twisted-holowinsky-sound}{B} is thus complete.

\begin{remark}
  In the split case, it seems likely that similar arguments
  yield for the LHS of \eqref{eqn:hque-conj} the estimate
  $\ll (\log k)^{-\delta + \eps}$ with
  $\delta = \min_{c \in [0,2]} \max(c^2-1/2,(1-c)^2) = 1/16$
  (for $\Psi$ cuspidal), improving upon the exponent
  $\delta = 1/30$ obtained in \cite[Thm 1 (i)]{MR2680499}.
\end{remark}

\subsection*{Acknowledgements}
We gratefully acknowledge the support of NSF grant OISE-1064866
and SNF grant SNF-137488 during the work leading to this paper.

\def\cprime{$'$} \def\cprime{$'$} \def\cprime{$'$} \def\cprime{$'$}

\end{document}